\theoremstyle{plain}
\newtheorem{thm}{Theorem}[section] 
\newtheorem{lemma}[thm]{Lemma}
\newtheorem{prop}[thm]{Proposition}
\theoremstyle{definition}
\newtheorem{defn}[thm]{Definition}
\newtheorem{assumption}[thm]{Assumption}
\theoremstyle{remark}
\newtheorem*{remark}{Remark}
\renewcommand{\a}{\alpha}
\renewcommand{\b}{\beta}
\newcommand{\e}{\varepsilon}
\renewcommand{\l}{\lambda}
\renewcommand{\o}{\omega}
\newcommand{\E}{\mathbb{E}}
\renewcommand{\L}{\Lambda}
\newcommand{\N}{\mathbb{N}}
\renewcommand{\O}{\Omega}
\renewcommand{\P}{\mathbb{P}}
\newcommand{\R}{\mathbb{R}}
\newcommand{\BB}{\mathcal{B}}
\newcommand{\CC}{\mathcal{C}}
\newcommand{\LL}{\mathcal{L}}
\newcommand{\MM}{\mathcal{M}}
\newcommand{\NN}{\mathcal{N}}
\newcommand{\PP}{\mathcal{P}}
\newcommand{\VV}{\mathcal{V}}
\newcommand{\1}{\mathbbm{1}}
\newcommand{\Ocarre}[3]{\Delta_{#1}(#2,\,#3)}
\newlength{\dhatheight}
\begin{document}

\begin{frontmatter}

\title{Limit Theorems for Cloning Algorithms}
\runtitle{Limit Theorems for Cloning Algorithms}


\begin{aug}
\author{\fnms{Letizia} \snm{Angeli}}, 
\author{\fnms{Stefan} \snm{Grosskinsky}}
\and
\author{\fnms{Adam M.} \snm{Johansen}}


\address{Letizia Angeli: University of Warwick, The Alan Turing Institute and Heriot-Watt University. \\\noindent Email: \href{mailto:l.angeli@hw.ac.uk}{l.angeli@hw.ac.uk}\\\noindent
Stefan Grosskinsky: University of Warwick, The Alan Turing Institute and TU Delft. \\\noindent Email: \href{mailto:s.w.grosskinsky@tudelft.nl}{s.w.grosskninsky@tudelft.nl}\\\noindent
Adam M. Johansen: University of Warwick and The Alan Turing Institute.\\\noindent Email: \href{mailto:a.m.johansen@warwick.ac.uk}{a.m.johansen@warwick.ac.uk}
}

\runauthor{L. Angeli, S. Grosskinsky and A. M. Johansen}
\end{aug}

\begin{abstract}

 Large deviations for additive path functionals of stochastic processes have attracted significant research interest, in particular in the context of stochastic particle systems and statistical physics. Efficient numerical `cloning' algorithms have been developed to estimate the scaled cumulant generating function, based on importance sampling via cloning of rare event trajectories. So far, attempts to study the convergence properties of these algorithms in continuous time have led to only partial results for particular cases. Adapting previous results from the literature of particle filters and sequential Monte Carlo methods, we establish a first comprehensive and fully rigorous approach to bound systematic and random errors of cloning algorithms in continuous time. To this end we develop a method to compare different algorithms for particular classes of observables, based on the martingale characterization of stochastic processes. Our results apply to a large class of jump processes on compact state space, and do not involve any time discretization in contrast to previous approaches. This provides a robust and rigorous framework that can also be used to evaluate and improve the efficiency of algorithms.

\end{abstract}

\begin{keyword}[class=MSC]
\kwd[Primary ]{65C35}
\kwd{60F25}
\kwd{62L20}
\kwd[; secondary ]{60F10}
\kwd{60J75}
\kwd{60K35}
\end{keyword}

\begin{keyword}
\kwd{cloning algorithm}
\kwd{dynamic large deviations}
\kwd{interacting particle systems}
\kwd{$L^p$ convergence}
\kwd{Feynman-Kac formulae}
\kwd{jump processes}
\end{keyword}

\end{frontmatter}


\section{Introduction}

Cloning algorithms have been introduced to the theoretical physics literature \cite{giardina, lecomte2007numerical} as numerical methods to study large deviations of particle currents and other dynamic observables in stochastic particle systems. They combine importance sampling with a stochastic selection mechanism which is used to evaluate numerically the scaled cumulant generating function for time-additive path functionals of stochastic processes. Based on classical ideas of evolutionary algorithms \cite{anderson1975random, grassberger2002go}, a fixed size population of copies of the original system evolves in parallel, subject to cloning or killing in such a way as to favor the realization of atypical trajectories contributing to rare events.
Various variants of the approach are now applied on a regular basis to different systems and large deviation phenomena of interest \cite{giardina2011simulating,hurtado2014thermodynamics,perez2019sampling}, including also current fluctuations of non-equilibrium lattice gas models \cite{hurtado2009test, hurtado2014thermodynamics,nemoto2018optimizing,chleboun2018current}, turbulent flows \cite{lestang2020numerical}, glassy dynamics \cite{pitard2011dynamic}, heat waves in climate models \cite{ragone2018computation} and pressure of the edge-triangle model \cite{giardina2021approximating}. 
Due to its widespread applications, the mathematical justification and convergence properties of the algorithm have recently become a subject of research interest with only partial progress. Formal approaches so far are based on a branching process interpretation of the algorithm in discrete time \cite{nemoto2017finite}, with limited and mostly numerical results in continuous time \cite{hidalgo2017finite, tchernookov2010list, nemoto2016population, brewer2018efficient}. \\


In this paper, we provide a novel interpretation of cloning algorithms through Feynman-Kac models and their particle approximations (see \cite{smc:theory:DM00, del2004feynman, del2013mean, del2003particle} for comprehensive reviews), which is itself an established approach to understanding sequential Monte Carlo methods and particle filtering.
Previous results provide rigorous control on convergence properties and error bounds of particle filters and related algorithms, mostly for models in discrete time, beginning with the chain of research initiated by \cite{smc:theory:Del96} with a recent survey provided in \cite{del2013mean}.
Fewer results address continuous-time dynamics, dating back to \cite{smc:theory:CL97} in the filtering context, with a Feynman-Kac-based treatment provided by \cite{smc:theory:DM00} and references therein; a survey of the filtering literature is provided by \cite[Chapter 9]{smc:theory:BC09}. In the current context, particularly relevant recent works include \cite{del2000moran, rousset2006control,del2012concentration,eberle2013quantitative, cerou2016central}. This literature generally considers diffusive dynamics and relies upon approximative time-discretisations of those dynamics. Adapting those results to the context of jump processes on locally compact state spaces, for which exact simulation from the dynamics is possible, we can establish first rigorous convergence results for the cloning algorithm in continuous time including $L^p$ bounds on the random error and bounds on the systematic error. These bounds include the explicit dependence on the clone size distribution, which is a key parameter of the cloning algorithm. The setting of finite activity pure jump processes in which cloning algorithms are primarily employed allows these algorithms to avoid time discretisation by simulating exactly from the law of the underlying process and allows the use of different approximating particle systems. Similar methods have been previously employed in the probabilistic rare event analysis literature in both discrete and continuous time, via explicit Feynman-Kac approximations, e.g. \cite{smc:methodology:CDLL06}, and splitting algorithms (see \cite{probability:rare:BGGLR16} and references therein); however, both the underlying processes and approximations considered are quite different to those for which cloning algorithms are usually employed. Practically, an important contribution of our approach is a systematic method to compare different cloning algorithms and particle approximations for particular classes of observables of interest, based on the martingale characterization of continuous-time stochastic processes. 

This framework provides a novel perspective on the underlying structure of cloning algorithms in terms of McKean representations \cite[Section 1.2.2]{del2013mean}, and can be used to systematically explore several degrees of freedom in the design of algorithms that can be used to improve performance, as illustrated in \cite{angeli2018rare} for current large deviations of the inclusion process \cite{chleboun2018current}. Here we focus on presenting full rigorous results obtained by applying this approach to a version of the classical cloning algorithm in continuous time \cite{lecomte2007numerical}. In contrast to previous work in the context of cloning algorithms \cite{nemoto2017finite, hidalgo2017finite}, our mathematical approach does not require a time discretization and works in the very general setting of a pure jump Markov process on a locally compact state space. This covers in particular any finite-state Markov chain or stochastic particle systems on finite lattices.
\\

The paper is organized as follows. In Section \ref{section1} we introduce general Feynman-Kac models associated to pure jump Markov processes and show that they can be interpreted as the law of a non-linear Markov process, known as a {McKean interpretation} \cite{del2004feynman}. In Section \ref{section2} we introduce particle approximations for Feynman-Kac models, including classical mean-field versions and cloning algorithms. We provide generalized conditions for  convergence as our main result (proved in Section \ref{section_proofs}), and use this to establish rigorous convergence bounds for cloning algorithms.
In Section \ref{section3} we introduce large deviations and scaled cumulant generating functions (SCGF) of additive observables for pure jump Markov processes and discuss how the results presented  in Section \ref{section2} can be applied to estimate the SCGF. We conclude with a short discussion in Section \ref{discussion}.



\section{Mathematical Setting}\label{section1}

\subsection{Dynamics and Feynman-Kac models}

We consider a continuous-time homogeneous Feller process $\big(X_t:t\geq 0\big)$ taking values on a locally compact Polish state space $(E,\BB(E))$, where $\BB(E)$ is the Borel field on $E$. We denote by $\MM(E)$ and $\PP(E)$ the sets of measures and probability measures, respectively, on $(E,\BB(E))$. $\big( P(t):t\geq 0\big)$ describes the semigroup associated with $X_t$, which is considered as acting on the Banach space $\CC_b(E)$ of bounded continuous functions $f:E\to\R$, endowed with the supremum norm 
\begin{equation*}
    \|f\|=\sup_{x\in E}|f(x)|.
\end{equation*}
We use the standard notation $\P$ and $\E$ for the distribution and the corresponding expectation on the usual path space
\begin{equation*}
    \Omega:=\big\{ \o:[0,\infty)\to E \mathrm{\, right \, continuous \, with \, left \, limits}\big\}.
\end{equation*}
The measurable structure on $\Omega$ is given by the Borel $\sigma$-algebra induced by the \textit{Skorokhod topology} (see \cite{billingsley2013convergence}, Chapter 3). 
If we want to emphasize a particular initial condition $x\in E$ or distribution $\mu \in \PP(E)$ of the process we write $\P_x$ and $\E_x$, or $\P_\mu$ and $\E_\mu$, respectively. 
The semigroup $P(t)$ acts on bounded continuous functions $f$ and probability measures $\mu\in\PP(E)$  via
\begin{equation*}
    P(t)\, f(x)=\E_x \big[ f(X_t )\big]\ ,\quad \mu P(t)\,(f):=\int_E P(t) f(x) \mu (dx) =\E_\mu \big[ f(X_t )\big]\ ,
\end{equation*}
where the latter provides a weak characterization of the distribution $\mu P(t)$ at time $t\geq 0$.
Here and in the following we use the common notation $\mu (f)$ for expectations of $f\in \CC_b(E)$ w.r.t.\ measures $\mu$ on $E$.

Using the Hille-Yosida Theorem (see e.g.\ \cite{liggett2010continuous}, Chapter 3), it is possible to associate to the above Feller process an infinitesimal generator $\LL$ acting on a dense subset $\mathcal{D}\subset\CC_b(E)$ so that
\begin{equation*}
    \frac{d}{dt}P(t)\, f=\,\LL \big(P(t)\, f\big)\,=\,P(t)\, \LL (f),
\end{equation*}
for all $f\in\mathcal{D}$ and $t\geq 0$.

In this work, we restrict ourselves to nonexplosive pure jump Feller processes. We denote by $\l(x)$ the escape rate from state $x\in E$ and the target state is chosen with the probability kernel 
$p(x,dy)$, so that the overall transition rate is
\begin{equation}\label{rates}
W(x,dy):=\l(x)\cdot p(x,dy)
\end{equation}
for $(x,y)\in E^2$. 
We assume $\l:E\to [0,\infty)$ to be a strictly positive, bounded and continuous function 
and $x\mapsto p(x,A)$ to be a continuous function for every $A\in \BB(E)$. Under these assumptions, the pure jump process possesses an infinitesimal generator \cite[p. 162]{ethier2009markov} with full domain $\mathcal{D}=\CC_b(E)$ given by
\begin{equation*}
\LL (f)(x)=\int_E W(x,dy)[f(y)-f(x)], \quad \forall f\in \CC_b(E), \, x\in E. 
\end{equation*}

Along with jump processes on continuous spaces such as continuous-time random walks on $\R^d$ (see e.g. \cite{ctrw}), this setting includes in particular any finite-state continuous-time Markov chain. 
Typical compact examples we have in mind are given by stochastic particle systems on $E=S^\L$, with finite local state space $S$ and lattice $\L$ which can be finite or countably infinite.
These include spin systems with $S=\{ -1,1\}$ or exclusion processes with $S=\{0,\,1\}$, in which particles can jump only onto empty sites. Stochastic particle systems such as zero-range processes with $S=\N_0$ are locally compact as long as the lattice $\L$ is finite (see e.g.\ \cite{liggett2} for details).

We will study Feynman-Kac models associated to the jump process by tilting its generator with a diagonal part or potential, which arise in many applications including dynamic large deviations, as explained in detail in Section \ref{section3}.

\begin{lemma}\label{tilted}
Consider a potential function $\VV\in C_b (E)$ and the tilted generator
\begin{equation}\label{fkgen}
    \LL^\VV (f)(x):=\LL (f)(x) +\VV (x) f(x)\quad\mbox{defined for all }f\in C_b (E)\ .
\end{equation}
Then the family of operators $\big( P^\VV (t):t\geq 0\big)$ with $P^\VV :C_b (E)\to C_b (E)$, defined as the solution to the backward equation
\begin{equation}\label{nusemi}
    \frac{d}{dt} P^\VV (t)f=\LL^\VV \big( P^\VV (t)f\big)\quad\mbox{with}\quad P^\VV (0)f=f
\end{equation}
for all $f\in C_b (E)$, forms a non-conservative semigroup, the so-called Feynman-Kac semigroup, and $\LL^\VV$ is its infinitesimal generator in the sense of the Hille-Yosida Theorem.
\end{lemma}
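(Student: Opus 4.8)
The plan is to construct $P^\VV(t)$ explicitly as a perturbation series (Dyson--Phillips / Duhamel expansion) around the unperturbed semigroup $P(t)$, and then verify that it has all the asserted properties. Concretely, I would define
\begin{equation*}
    P^\VV(t)f := \sum_{n\geq 0} P_n(t)f, \quad\text{where } P_0(t)f = P(t)f,\ P_n(t)f = \int_0^t P(t-s)\big(\VV\cdot P_{n-1}(s)f\big)\,ds\ .
\end{equation*}
Since $\VV\in C_b(E)$ is bounded, the multiplication operator $f\mapsto \VV f$ is a bounded operator on the Banach space $C_b(E)$ with norm $\|\VV\|$; combined with the contraction property $\|P(t)\|\leq 1$ of the Feller semigroup, an induction gives $\|P_n(t)\|\leq \|\VV\|^n t^n/n!$, so the series converges absolutely and uniformly on compact time intervals, with $\|P^\VV(t)\|\leq e^{\|\VV\| t}$. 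Each $P_n(t)$ maps $C_b(E)$ into itself (the integral of a continuous bounded family, using the Feller property of $P$ for the preservation of continuity), hence so does $P^\VV(t)$.

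Next I would establish the semigroup property $P^\VV(t+s) = P^\VV(t)P^\VV(s)$ and the initial condition $P^\VV(0)f = f$. The initial condition is immediate since $P_0(0)=\mathrm{id}$ and $P_n(0)=0$ for $n\geq 1$. For the semigroup property, the cleanest route is to first show directly from the series that $P^\VV$ satisfies the integral (mild) equation
\begin{equation*}
    P^\VV(t)f = P(t)f + \int_0^t P(t-s)\big(\VV\cdot P^\VV(s)f\big)\,ds\ ,
\end{equation*}
obtained by rearranging the defining recursion, and then invoke uniqueness of solutions to this Volterra-type equation (again via the $\|\VV\|$-boundedness and a Gronwall/contraction argument on $[0,T]$) to identify $s\mapsto P^\VV(t+s)f$ with $s\mapsto P^\VV(t)(P^\VV(s)f)$, since both solve the same mild equation with the same datum $P^\VV(t)f$. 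This also yields strong continuity $t\mapsto P^\VV(t)f$ directly from the integral equation and strong continuity of $P(t)$.

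Finally I would verify that $\LL^\VV$ is the generator. Differentiating the mild equation at $t=0$: for $f\in C_b(E)=\DD$, the map $t\mapsto P(t)f$ is differentiable with derivative $\LL(P(t)f) \to \LL f$, and $\frac{d}{dt}\big|_{t=0}\int_0^t P(t-s)(\VV P^\VV(s)f)\,ds = \VV f$ by continuity of the integrand at $s=0$; hence $\frac{d}{dt}\big|_{t=0}P^\VV(t)f = \LL f + \VV f = \LL^\VV f$. Combined with the semigroup property this gives the backward equation \eqref{nusemi} on all of $C_b(E)$, and since $\LL^\VV = \LL + \VV\cdot$ is a bounded perturbation of the generator $\LL$, standard bounded perturbation theory (Hille--Yosida) confirms $\LL^\VV$ generates $P^\VV$ and that $P^\VV$ is non-conservative, as $P^\VV(t)\1 = e^{\int_0^t \cdots}$-type expression need not equal $\1$ when $\VV\not\equiv 0$ (e.g. $P^\VV(t)\1 \neq \1$ already from the first-order term $\int_0^t P(t-s)\VV\,ds$). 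The main obstacle is not any single hard estimate but rather assembling the perturbation argument cleanly: in particular checking that continuity (the $C_b(E)\to C_b(E)$ mapping property) is preserved through the iterated integrals — this rests on the Feller property together with dominated convergence and uniform control of the series tail — and making the uniqueness argument for the mild equation rigorous enough to transfer the semigroup property. Since $\LL^\VV$ is only a bounded perturbation, no domain subtleties arise, which is what makes the full domain $\DD = C_b(E)$ claim painless once the series machinery is in place.
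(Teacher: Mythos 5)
Your proposal is correct and complete for the statement as given; the paper itself offers no argument beyond a citation to Liggett (Theorem 3.47), so you are in effect supplying the standard proof of the cited bounded-perturbation result via the Dyson--Phillips/Duhamel series. Two remarks on the comparison. First, in the setting of this paper the escape rate $\l$ is bounded, so $\LL$ is itself a \emph{bounded} operator on $\CC_b(E)$ with $\|\LL f\|\leq 2\|\l\|\,\|f\|$; consequently $P(t)=e^{t\LL}$ and $P^\VV(t)=e^{t(\LL+\VV)}$ can both be defined by norm-convergent operator exponentials, which makes the semigroup property, strong (indeed norm) continuity, the backward equation on all of $\CC_b(E)$, and the identification of the generator entirely routine --- your Volterra/Gronwall uniqueness step and the care about preservation of continuity through the iterated integrals (which is the one place your argument leans on strong continuity of $P(t)$ on $\CC_b(E)$, a point that can fail for general Feller semigroups on non-compact $E$ but is harmless here) could be bypassed altogether. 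Second, the route taken in the reference the paper cites is the probabilistic one: one shows directly that $P^\VV(t)f(x)=\E_x\big[f(X_t)\exp\big(\int_0^t \VV(X_s)\,ds\big)\big]$ solves \eqref{nusemi}, which is what earns the name ``Feynman--Kac semigroup'' and also makes positivity and the failure of conservativity ($P^\VV(t)1\neq 1$ unless $\VV\equiv 0$) transparent; your purely analytic construction proves the lemma but does not exhibit this representation, which the paper implicitly relies on later (e.g.\ in \eqref{P_k} and \eqref{nut1}). Neither point is a gap in your proof of the lemma as stated.
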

\begin{proof}
See \cite{liggett2010continuous}, Theorem 3.47.

\end{proof}

In order to control the asymptotic behaviour of $P^\VV (t)$, we make the following assumption, which closely resembles \cite[Assumption 1]{rousset2006control}, on asymptotic stability.

\begin{assumption}[Asymptotic Stability]\label{ass_expstab_generic}
The spectrum of $\LL^\VV =\LL+\VV$ \eqref{fkgen} is bounded by a principal eigenvalue $\lambda_0$. Moreover, $\lambda_0$ is associated to a positive eigenfunction $r\in\CC_b(E)$ and an eigenmeasure $\mu_\infty\in\PP(E)$. Finally, there exist constants $\a>0$ and $\rho\in (0,1)$ such that
\begin{equation}\label{asssta}
    \big\| e^{-t\lambda_0} P^\VV (t)f(\cdot)-\mu_\infty(f)\big\|\,\leq\, \|f\|\cdot \a\rho^t\ ,
\end{equation}
for every $t\geq 0$ and $f\in\CC_b(E)$.
\end{assumption}

 Asymptotic stability is for example guaranteed for all irreducible, finite-state continuous-time Markov chains which necessarily have a spectral gap. For alternative sufficient conditions implying asymptotic stability in a more general context including continuous state spaces, see Appendix \ref{appendix_stability}.
 
 We introduce the measures $\nu_{t,\mu_0}$ for any general initial distribution $\mu_0\in\PP(E)$ and $t\geq 0$, defined by
\begin{equation}\label{nu}
    \nu_{t,\mu_0}(f):=\mu_0\big( P^\VV (t)f\big),
\end{equation}
for any $f\in\CC_b(E)$. In the literature \cite{del2004feynman}, $\nu_{t}$ is known as the \textit{unnormalised t-marginal Feynman-Kac measure}. Applying Lemma \ref{tilted}, we can see that $\nu_{t}$ solves the evolution equation
\begin{equation}\label{nu_eq}
    \frac{d}{dt}\nu_{t,\mu_0}(f)=\nu_{t,\mu_0}\big(\LL^\VV (f)\big)=\nu_{t,\mu_0}\big(\LL (f)+\VV\cdot f \big),
\end{equation}
for any $f\in\CC_b(E)$, $t\geq 0$ and $\mu_0\in\PP(E)$. The measures with which one can most naturally associate a process are the corresponding \textit{normalised t-marginal Feynman-Kac measures} in $\PP(E)$,
\begin{equation}\label{def_mu}
    \mu_{t,\mu_0} (f):=\frac{\nu_{t,\mu_0}(f)}{\nu_{t,\mu_0}(1)},
\end{equation}
defined for any $t\geq 0$ and $f\in\CC_b(E)$.



Observe that, as a direct consequence of asymptotic stability (Assumption \ref{ass_expstab_generic}), there exist constants $\tilde{\a}\geq 0$ and $0<\rho<1$ such that for any $f\in\CC_b(E)$,
\begin{equation}\label{lemma_expstab}
    \big|\mu_{t,\mu_0}(f)-\mu_\infty(f)\big|\leq \|f\|\cdot {\tilde{\a}\rho^{t}} \ ,
\end{equation}
for any $t\geq 0$ and initial distribution $\mu_0\in\PP(E)$. In particular $\mu_{t,\mu_0}$ converges weakly to $\mu_\infty$, as $t\to\infty$. 
Indeed, by definition of $\mu_{t,\mu_0}$ \eqref{def_mu} and then by asymptotic stability (Assumption \ref{ass_expstab_generic}),
\begin{equation}\label{assstab}
    \frac{\mu_\infty(f)\, -\, \|f\|\,\alpha\cdot \rho^t}{1+\alpha\cdot \rho^t}\leq\mu_{t,\mu_0}(f)\,=\, \frac{\mu_0\big(e^{-t\lambda_0}P^\VV (t)f\big)}{\mu_0\big(e^{-t\lambda_0}P^\VV (t)1\big)}\,\leq\, \frac{\mu_\infty(f)\,+\, \|f\|\,\alpha\cdot \rho^t}{1-\alpha\cdot \rho^t} \ ,
\end{equation}
for any $t> -\log\alpha /\log \rho$ and for some constant 
$\alpha >0$. This gives the bound \eqref{lemma_expstab} for any $t$ large enough. Increasing $\tilde{\a}$ accordingly to ensure that the bound holds also for small $t$, we obtain \eqref{lemma_expstab} for any $t\geq 0$.

For simplicity, in the rest of this article the initial distribution $\mu_0$ is fixed and we write $\mu_t$ (resp. $\nu_t$) instead of $\mu_{t,\mu_0}$ (resp. $\nu_{t,\mu_0}$).

\subsection{McKean Interpretations}

Now, we want to outline the evolution of the time-marginal distribution $\mu_{t}$ in terms of interacting jump-type infinitesimal generators. The content presented in the rest of this section is based on the works of Del Moral and Miclo \cite{del2004feynman, del2013mean, smc:theory:DM00}.
In this established framework it is possible to define generic Markov processes with time marginals $\mu_t$ and then use Monte Carlo sampling techniques to approximate those marginals.

\begin{lemma}\label{dmu}
For every $f\in\CC_b(E)$ and $t\geq 0$, the normalised $t$-marginal $\mu_t$ \eqref{def_mu} solves the non-linear evolution equation
\begin{equation}\label{mu_evolution}
\frac{d}{dt}\mu_t(f)=\mu_t\big(\LL (f)\big)+\mu_t(\VV  f)-\mu_{t}(f)\cdot\mu_t(\VV ).
\end{equation}
\end{lemma}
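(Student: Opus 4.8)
The plan is to differentiate the quotient $\mu_t(f)=\nu_t(f)/\nu_t(1)$ from \eqref{def_mu} directly in $t$, using the quotient rule together with the linear evolution equation \eqref{nu_eq} already established for the unnormalised marginals $\nu_t$.

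First I would record the two ingredients needed to legitimise the quotient rule. Via the Feynman--Kac representation $P^\VV(t)1(x)=\E_x\big[\exp\big(\int_0^t\VV(X_s)\,ds\big)\big]$ one has $P^\VV(t)1(x)\in[e^{-t\|\VV\|},e^{t\|\VV\|}]$, hence $\nu_t(1)=\mu_0\big(P^\VV(t)1\big)\in[e^{-t\|\VV\|},e^{t\|\VV\|}]$ is in particular bounded away from $0$ on compact time intervals, so $\mu_t$ is well defined and the quotient rule applies. Since $1\in\CC_b(E)$, equation \eqref{nu_eq} applies to $f\equiv 1$; because $\LL(1)\equiv0$ (immediate from $\LL(g)(x)=\int_E W(x,dy)[g(y)-g(x)]$), this gives $\frac{d}{dt}\nu_t(1)=\nu_t\big(\LL(1)+\VV\big)=\nu_t(\VV)$. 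Likewise \eqref{nu_eq} gives $\frac{d}{dt}\nu_t(f)=\nu_t(\LL(f))+\nu_t(\VV f)$ for every $f\in\CC_b(E)$, so both $t\mapsto\nu_t(f)$ and $t\mapsto\nu_t(1)$ are differentiable.

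Then the quotient rule yields
\[
\frac{d}{dt}\mu_t(f)=\frac{1}{\nu_t(1)}\frac{d}{dt}\nu_t(f)-\frac{\nu_t(f)}{\nu_t(1)^2}\frac{d}{dt}\nu_t(1)=\frac{\nu_t(\LL(f))+\nu_t(\VV f)}{\nu_t(1)}-\frac{\nu_t(f)}{\nu_t(1)}\cdot\frac{\nu_t(\VV)}{\nu_t(1)},
\]
and recognising $\nu_t(\cdot)/\nu_t(1)=\mu_t(\cdot)$ in each term gives exactly \eqref{mu_evolution}.

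The only genuine (and minor) obstacle is the regularity bookkeeping around the normalising constant: one must confirm that $\nu_t(1)$ stays strictly positive so that differentiating the ratio is licit, and that $\LL(1)\equiv0$ so that the $\frac{d}{dt}\nu_t(1)=\nu_t(\VV)$ identity holds — both immediate in the present finite-activity pure jump setting with $\mathcal{D}=\CC_b(E)$. The substantive analytic content has already been discharged in establishing \eqref{nu_eq} through Lemma \ref{tilted}.
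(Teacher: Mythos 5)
Your proposal is correct and follows essentially the same route as the paper: differentiate the quotient $\nu_t(f)/\nu_t(1)$ using the evolution equation \eqref{nu_eq} and use $\LL(1)\equiv 0$ to identify $\frac{d}{dt}\nu_t(1)=\nu_t(\VV)$. The extra remarks on the strict positivity of $\nu_t(1)$ are a harmless (and welcome) addition that the paper leaves implicit.
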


\begin{proof}
Using the evolution equation \eqref{nu_eq} of $\nu_t$, we see that
\begin{align*}
\frac{d}{dt}\mu_t(f)&=\frac{d}{dt}\frac{\nu_t(f)}{\nu_t(1)}\\
&= \frac{1}{\nu_t(1)}\cdot \nu_t\big(\LL (f)\,+\, \VV\cdot f \big)-\frac{\nu_t(f)}{\nu_t(1)^2}\,\nu_t\big(\LL (1)\,+\, \VV\big)\\
&= \mu_t\big(\LL (f)\big)+\mu_t(\VV  f)-\mu_{t}(f)\cdot\mu_t(\VV )\ .
\end{align*}

\end{proof}

The evolution equation \eqref{mu_evolution} results from the unique decomposition of the non-conservative generator $\LL+\VV$ into a conservative and a diagonal part given by the potential $\VV$. The latter, together with the normalization of $\nu_t$, leads to the nonlinear second part in \eqref{mu_evolution} which we want to rewrite to be in the form of another infinitesimal generator, that we denote by $\widetilde{\LL}_{\mu_t}$. Since \eqref{mu_evolution} is non-linear in $\mu_t$, this depends itself on the current distribution such that
\begin{equation}\label{general_mcKean}
    \mu\big(\widetilde{\LL}_\mu (f)\big)\,=\, \mu(\VV f)\,-\, \mu(f)\cdot \mu(\VV)\ ,
\end{equation}
for every $\mu\in \PP(E)$ and $f\in\CC_b(E)$. 
The choice of the non-linear generator $\widetilde{\LL}_\mu$ is not unique, leading to various representations of the form
\begin{equation}\label{widetildeLL}
    \widetilde{\LL}_\mu (f)(x)=\int_E \widetilde{W}(x,y)\big(f(y)-f(x)\big)\mu(dy)\ ,
\end{equation}
where $ \widetilde{W}(x,y)\mu(dy)$ is the overall transition kernel of $\widetilde{\LL}_\mu$ and depends on the current distribution $\mu$. 

\begin{lemma}[Sufficient conditions]\label{sufficient_conditions}
An infinitesimal generator in the form \eqref{widetildeLL} satisfies condition \eqref{general_mcKean} if and only if 
\begin{equation*}
    \mu\big( \widetilde{W}(\cdot,x)-\widetilde{W}(x,\cdot)\big)\,=\, \VV(x)-\mu(\VV)\ ,
\end{equation*}
for all $\mu\in\PP(E)$ and $x\in E$. In particular, a sufficient condition on $\widetilde{\LL}_\mu$ \eqref{widetildeLL} for \eqref{general_mcKean} to hold is
\begin{equation*}
    \widetilde{W}(y,x)\,-\,\widetilde{W}(x,y)\,=\, \VV(x)\,-\,\VV(y)\ ,
\end{equation*}
for all $x,y\in E$.
\end{lemma}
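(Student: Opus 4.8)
The plan is to reduce the lemma to a single identity obtained from Fubini's theorem. Writing out \eqref{widetildeLL} and integrating against $\mu$,
\[
\mu\big(\widetilde{\LL}_\mu(f)\big)=\int_E\!\int_E \widetilde{W}(x,y)\,\big(f(y)-f(x)\big)\,\mu(dy)\,\mu(dx);
\]
I would split this into the piece carrying $f(y)$ and the piece carrying $f(x)$, and in the former swap the names of the two integration variables. Assuming $\widetilde{W}$ bounded (so that Fubini applies — in the constructions of interest it is built from the bounded rates of the model), this yields the key formula
\[
\mu\big(\widetilde{\LL}_\mu(f)\big)=\int_E f(x)\,\mu\big(\widetilde{W}(\cdot,x)-\widetilde{W}(x,\cdot)\big)\,\mu(dx)\qquad\text{for all }f\in\CC_b(E),\ \mu\in\PP(E).
\]
Everything else is bookkeeping around this identity.

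Next I would rewrite the right-hand side of \eqref{general_mcKean} in the same shape, namely $\mu(\VV f)-\mu(f)\mu(\VV)=\int_E f(x)\big(\VV(x)-\mu(\VV)\big)\mu(dx)$. Subtracting, condition \eqref{general_mcKean} becomes equivalent to
\[
\int_E f(x)\,\Big[\mu\big(\widetilde{W}(\cdot,x)-\widetilde{W}(x,\cdot)\big)-\VV(x)+\mu(\VV)\Big]\,\mu(dx)=0\qquad\text{for every }f\in\CC_b(E).
\]
The "if" implication is then immediate: if $\mu\big(\widetilde{W}(\cdot,x)-\widetilde{W}(x,\cdot)\big)=\VV(x)-\mu(\VV)$ for all $x$ and all $\mu$, the bracket vanishes identically and the integral is $0$. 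For the "only if" implication, the displayed orthogonality relation over all bounded continuous $f$ forces the bracketed function to vanish $\mu$-almost everywhere; to upgrade this to every $x\in E$ and every $\mu$ I would test against the two-point measures $\mu=\tfrac12(\delta_x+\delta_y)$, for which the identity reads $\tfrac14\big(f(x)-f(y)\big)\big[\widetilde{W}(y,x)-\widetilde{W}(x,y)-\VV(x)+\VV(y)\big]=0$, and then use that $\CC_b(E)$ separates the points of the Polish space $E$ to pick $f$ with $f(x)\neq f(y)$. This pins down $\widetilde{W}(y,x)-\widetilde{W}(x,y)=\VV(x)-\VV(y)$, whence the general relation follows by integrating in $y$.

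Finally, the "in particular" clause is the same integration run once more: if $\widetilde{W}(y,x)-\widetilde{W}(x,y)=\VV(x)-\VV(y)$ for all $x,y$, then integrating against $\mu(dy)$ gives $\mu\big(\widetilde{W}(\cdot,x)-\widetilde{W}(x,\cdot)\big)=\VV(x)-\mu(\VV)$, so \eqref{general_mcKean} holds by the "if" part. The two steps that need genuine care are the Fubini exchange (handled by boundedness of $\widetilde{W}$) and the passage from "$\mu(fh)=0$ for all $f\in\CC_b(E)$" to a pointwise identity in $x$; the latter — carried out via two-point test measures and separation of points, which is the only place the topology of $E$ enters — is the main, though minor, obstacle.
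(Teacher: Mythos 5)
Your proposal is correct and rests on exactly the same key step as the paper's proof: the change of integration variables turning $\int_{E^2}\widetilde{W}(x,y)(f(y)-f(x))\,\mu(dy)\mu(dx)$ into $\int_E f(x)\,\mu\big(\widetilde{W}(\cdot,x)-\widetilde{W}(x,\cdot)\big)\,\mu(dx)$, after which comparison with $\mu(\VV f)-\mu(f)\mu(\VV)$ gives the lemma. Your additional care on the ``only if'' direction (two-point test measures plus separation of points to upgrade the $\mu$-a.e.\ statement to a pointwise one) is sound and fills in a detail the paper leaves implicit.
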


\begin{proof}
It is enough to observe that
\begin{align*}
    \mu\big(\widetilde{\LL}_\mu (f)\big)&=\int_{E^2} \widetilde{W}(x,y)\big(f(y)-f(x)\big)\mu(dy)\mu(dx)\\
    &=\int_{E^2} \big(\widetilde{W}(y,x)-\widetilde{W}(x,y)\big)\,f(x)\mu(dy)\mu(dx)\ .
\end{align*}

\end{proof}

Combining $\widetilde{\LL}_\mu$ with the linear part $\LL$ of \eqref{mu_evolution} into a so-called \textit{McKean generator} on $\CC_b(E)$, 
\begin{equation}\label{mcKean_generator}
    \overline{\LL}_\mu\,:=\, \LL \,+\, \widetilde{\LL}_\mu\quad\mbox{for all }\mu\in\PP (E)\ ,
\end{equation}
the evolution equation \eqref{mu_evolution} can be written as
\begin{equation*}
    \frac{d}{dt} \mu_t(f)\,=\, \mu_t\big(\overline{\LL}_{\mu_t}(f)\big)\ ,
\end{equation*}
for every $f\in \CC_b(E)$ and $t\geq 0$. Therefore, the normalized Feynman-Kac marginal $\mu_t$ can be interpreted as the law of a Markov process $\big(\overline{X}_t\,:\, t\geq 0\big)$ on $E$, associated to the family of generators $\big(\overline{\LL}_{\mu_t} :t\geq 0\big)$. This process is also known as a \textit{McKean representation} of the process associated to the Feynman-Kac measure $\mu_t$, and it is non-linear and in particular time-inhomogeneous. This can be formulated using the propagator 
\begin{equation}\label{normalized_prop}
    \Theta_{t,T}f(x):=\frac{P^{\VV}(T-t)f(x)}{\mu_t\big(P^{\VV}(T-t)1\big)}\quad\mbox{such that}\quad \mu_T (f)=\mu_t (\Theta_{t,T} f)
\end{equation}
for all $0\leq t\leq T$, which follows directly from the definition of $\mu_t$ \eqref{def_mu} and the semigroup characterizing the time evolution for $\nu_t$ \eqref{nusemi}.

While the time evolution of $\mu_t$ is uniquely determined by \eqref{mu_evolution} and therefore independent of the choice of \eqref{mcKean_generator}, Lemma \ref{sufficient_conditions} leads to various different McKean representations of the form \eqref{widetildeLL} (see e.g.\ \cite{angeli2018rare, rousset2006control}), that can be characterized by the operator $\widetilde{W}$. One common choice related to algorithms in  \cite{giardina, lecomte2007numerical} is
\begin{equation}\label{mcKean1}
    \widetilde{W}_c(x,y)\,=\, \big(\VV(x)-c\big)^- \,+\, \big(\VV(y)-c\big)^+\ ,
\end{equation}
where $c\in \R$ is an arbitrary constant, and we use the standard notation $a^+ =\max\{0,a\}$ and $a^- = \max\{0,-a\}$ for positive and negative part of $a\in\R$.

One other possible representation of \eqref{widetildeLL} we want to mention explicitly here is given by
\begin{equation}\label{mcKean2}
    \widetilde{W}(x,y)\,=\, \big(\VV (y)-\VV (x)\big)^+\ .
\end{equation}
This corresponds to a pure jump process on $E$ in which every jump strictly increases the value of the potential $\VV $ in contrast to the previous representation \eqref{mcKean1}. We will see in the next section that $\VV $ can be interpreted as a fitness potential for the overall process. Further McKean representations of \eqref{mu_evolution} are discussed in \cite{angeli2018rare}, here we focus on cloning algorithms which are based on \eqref{mcKean1}.


\section{Interacting Particle Approximations}\label{section2}

Independent of the particular representation, the rates of the McKean process $(\overline{X}_t :t\geq 0)$ depend on the distribution $\mu_t$ itself, which is in general not known. A standard approach is to sample such processes through particle approximations \cite{del2003particle}, which involve running, in parallel, $N$ copies or clones $\xi_t:=(\xi_t^1,\dots,\xi_t^N)\in E^N$ of the process (called particles), and then approximating $\mu_t$ by the empirical distribution $m(\xi_t) $ of the realizations. For any $\underline{x}\in E^N$ the latter is defined as
\begin{equation}\label{empirical_distrib}
    m(\underline{x})(dy):=\frac{1}{N}\sum_{i=1}^N \delta_{x_i}(dy)\;\in\,\PP(E).
\end{equation}


We write $\overline{L}^N$ for the infinitesimal generator of an $N$-particle system $\xi_t$ and also call this an IPS generator, and denote the associated empirical distribution as
\begin{equation}\label{emea}
    \mu^N_t(\cdot):=m(\xi_t)(\cdot).
\end{equation}
We denote by
\begin{equation*}
    \Gamma_{\overline{L}^N}(\gamma,\varphi):=\overline{L}^N(\gamma\cdot \varphi)-\gamma\cdot \overline{L}^N(\varphi)-\varphi\cdot \overline{L}^N(\gamma)\ ,\qquad \gamma,\varphi\in \CC_b(E^N)\ ,
\end{equation*}
the standard \textit{carr\'{e}-du-champ} operator associated to the generator $\overline{L}^N$.

\subsection{A general convergence result}

The full dynamics can be set up in various different ways such that $\mu_t^N\to \mu_t$ converges in an appropriate sense as $N\to\infty$ for any $t\geq 0$. Theoretical convergence results can be obtained under the following assumptions, which are fulfilled by standard mean field particle approximations (as shown in Section \ref{subsection_MF}) and cloning algorithms (Section \ref{subsection_cloning}).


\begin{assumption}\label{ass_IPS}
\begin{subequations}
Given a family of McKean generators $\big(\overline{\LL}_\mu\big)_{\mu\in\PP(E)}$ \eqref{mcKean_generator} on $\CC_b(E)$, we assume that the sequence of particle approximations $(\xi_t :t\geq 0)$ with generators $(\overline{L}^N)_{N\in\N}$ on $\CC_b(E^N)$ satisfies \begin{align}
    \overline{L}^N(F)(\underline{x})\,&=\, m(\underline{x})\big(\overline{\LL}_{m(\cdot)}(f)\big)\ ,\label{ass_generator}\\
    \Gamma_{\overline{L}^N}( F,\,F)(\underline{x})\,&=\,\frac{1}{N}\,m(\underline{x})\big(G_{m(\cdot)}(f,f)\big)\,+\,\Ocarre{N}{\underline{x}}{f}\ ,\label{ass_carre}
\end{align}
for mean-field observables $F\in\CC_b(E^N)$ of the form $F(\underline{x})=m(\underline{x})(f)$, $f\in \CC_b(E)$.
Here $\Ocarre{N}{\underline{x}}{f}$ is a function of $\underline{x},$ and $N$, such that there exists a constant $C>0$ (independent of $N$, $f$) with
$$\|\Ocarre{N}{\,\cdot\,}{f}\|\leq C\,\frac{\|f\|^2}{N^2}\ ,$$
for any $f\in\CC_b(E)$ and $N\in\N$.
$\big(G_{\mu}\big)_{\mu\in\PP(E)}$ is a family of bilinear operators $G_{\mu}:\CC_b(E)\times \CC_b(E)\to \CC_b(E)$ independent of the population size $N$, such that
\begin{equation*}
    \sup_{\mu\in\PP(E)}\sup_{\|f\|\leq 1}\| G_\mu(f,f)\|< \infty\ .
\end{equation*}
Furthermore, we assume there exists a constant $K<\infty$ (independent of $N$), such that for all $N\in\N$, almost surely,
\begin{equation}\label{bounded_jumps}
    \sup_{t\geq 0}\left \vert \left\{ i \in 1,\ldots,N : \xi^i_t\neq \xi^i_{t-}\right\}\right\vert \,\leq\, K\ .
 \end{equation}
For the initial condition of the particle approximation we assume that
\begin{equation}\label{inicon}
    \xi_0^1 ,\ldots ,\xi_0^N\quad\mbox{are i.i.d.r.v's with distribution}\ \mu_0\ .
\end{equation}

\end{subequations}
\end{assumption}

\begin{remark} Test functions of the form
 \[
 F(\underline{x})=m(\underline{x})(f) =\frac{1}{N}\sum_{i=1}^N f(x_i )
 \]
 describe mean-field observables averaged over the particle ensemble which are generally of most interest, e.g.\ for the estimator \eqref{LTN} of the SCGF it is sufficient to consider such functions, as shown in Section \ref{4.1}. In general the goal is to approximate $\mu_t (f)$ for a given $f\in\CC (E)$, so it is natural to set up the auxiliary particle approximation in a permutation invariant way and use mean-field observables.\\

To better understand the above assumptions, recall that the carr\'{e} du champ of an interacting particle system is a quadratic operator associated to the fluctuations of the process, whereas the generator determines the expected behaviour of the observables $F({\xi}_t)$. Thus, Assumption \ref{ass_IPS} implies that trajectories of mean-field observables in a particle approximation coincide in expectation with average trajectories of the McKean representation they are based on \eqref{ass_generator}, and concentrate on their expectation with diverging $N$ \eqref{ass_carre}. We include the operators $G_\mu$ explicitly in  \eqref{ass_carre}, because it allows the condition to be stated in a convenient form and we anticipate it being useful in further analysis. Condition \eqref{bounded_jumps} assures that at any given time only a bounded number of particles can change their state, which is a mild technical assumption, 
necessary to allow the application of Lemma \ref{lemma6.2} in the proof of the $L^p$ error estimates. 
\end{remark}


\begin{thm}\label{thm_weak}
Consider a sequence of particle approximations satisfying Assumption \ref{ass_IPS} with empirical distributions $\mu_t^N$ \eqref{emea}.
Under Assumption \ref{ass_expstab_generic}, for every $p\geq 2$ there exists a constant $c_p>0$ independent of $N$ and $T$ such that
\begin{equation}\label{Lp_estimate}
    \sup_{T\geq 0}\,\mathbb { E } \left[ \left( \mu _ { T} ^ { N } ( f ) - \mu _ { T } ( f ) \right) ^ { p } \right] ^ { 1 / p } \,\leq\, \frac{c_p\|f\|}{N^{1/2}}  \ ,
\end{equation}
for any $f\in\CC_b(E)$. Furthermore, there exists a constant $c^\prime >0$ independent of $N$ and $T$ such that
\begin{equation}\label{bias_estimate}
    \sup_{T\geq 0}\,\left| \mathbb { E } \left[ \mu _ { T } ^ { N } ( f ) \right] - \mu _ { T } ( f ) \right| \,\leq\, \frac { c ^ { \prime }\| f \| } { N }\ ,
\end{equation}
for any $f\in\CC_b(E)$ and $N\in\N$ large enough.

\end{thm}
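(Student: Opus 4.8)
The plan is to establish a non-linear semigroup contraction estimate and then control the fluctuations of the particle system around its McKean limit via a martingale decomposition, following the strategy of Del Moral and Miclo adapted to the continuous-time jump setting. First I would fix $T\geq 0$ and, for $t\in[0,T]$, introduce the flow of empirical measures $\mu_t^N$ together with the ``interpolated'' McKean flow started from $\mu_t^N$, and write the error $\mu_T^N(f)-\mu_T(f)$ as a telescoping sum (in continuous time, an integral) of the increments obtained by replacing, over $[t,t+dt]$, the evolution of the particle system by that of the limiting semigroup. Concretely, using the propagator $\Theta_{t,T}$ of \eqref{normalized_prop}, I would write
\begin{equation*}
    \mu_T^N(f)-\mu_T(f)\,=\,\int_0^T d\,\Big[\mu_t^N\big(\Theta_{t,T}f\big)\Big]\;-\;\text{(drift terms that cancel by the McKean property \eqref{general_mcKean})}\ ,
\end{equation*}
so that what remains is a stochastic integral against the martingale part of $F\mapsto\mu_t^N(F)$ for the test function $F=\Theta_{t,T}f$. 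The key inputs here are: Assumption \ref{ass_IPS}\eqref{ass_generator}, which guarantees that the generator of $\mu_t^N(F)$ matches $\mu_t^N\big(\overline{\LL}_{\mu_t^N}(f)\big)$ so the drift telescopes exactly into the non-linear evolution \eqref{mu_evolution} with error only from the non-linearity being evaluated at $\mu_t^N$ rather than $\mu_t$; and the carré-du-champ bound \eqref{ass_carre}, which gives the predictable quadratic variation of the martingale as $O(1/N)$ uniformly, because $\|\Theta_{t,T}f\|$ is controlled uniformly in $t\leq T$ by Assumption \ref{ass_expstab_generic} (the ratio $P^\VV(T-t)f/\mu_t(P^\VV(T-t)1)$ is bounded by $\|f\|$ times a constant depending only on $\a,\rho,r,\mu_\infty$, not on $T$).

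Next, for the $L^p$ bound \eqref{Lp_estimate} I would apply a Burkholder--Davis--Gundy inequality to the continuous-time martingale, together with the bounded-jumps condition \eqref{bounded_jumps}, which is exactly what is needed to pass from the quadratic variation to $L^p$ control of the jumps of the martingale (this is where the cited Lemma \ref{lemma6.2}, i.e.\ a Burkholder-type estimate for pure-jump martingales with boundedly many simultaneous jumps, enters). This yields a bound of the form $\mathbb{E}\big[(\mu_T^N(f)-\mu_T(f))^p\big]^{1/p}\leq c_p\|f\|/\sqrt N$ provided one can absorb the non-linear drift error. That error, schematically $\int_0^T \big(\text{something Lipschitz in }\mu_t^N-\mu_t\big)\,dt$, is handled by a Grönwall-type argument: the exponential stability \eqref{lemma_expstab}/\eqref{asssta} provides the contraction that prevents the time integral from blowing up as $T\to\infty$, turning a naive bound growing like $T$ into one uniform in $T$. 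This uniformity-in-$T$ is, I expect, the main obstacle: one must be careful that every constant produced along the way (from BDG, from \eqref{ass_carre}, from bounding $\Theta_{t,T}$, and crucially from iterating the one-step estimate) depends only on $p$, $\a$, $\rho$, $K$ and the $G_\mu$-bound, and that the geometric ergodicity is used to sum a convergent geometric series rather than a linearly-growing one. A clean way to organize this is to first prove a ``local'' (finite-horizon, possibly $T$-dependent) version by induction on a partition of $[0,T]$ into unit intervals, then use \eqref{asssta} to show the contribution of the $k$-th interval back in time is damped by $\rho^{k}$, and sum.

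Finally, the bias bound \eqref{bias_estimate} follows from the same decomposition by taking expectations: the martingale term vanishes in expectation, so $\mathbb{E}[\mu_T^N(f)]-\mu_T(f)$ equals the expectation of the accumulated non-linear drift error, which is quadratic in $\mu_t^N-\mu_t$ (the non-linearity in \eqref{mu_evolution} is bilinear, of the form $\mu(\VV f)-\mu(f)\mu(\VV)$, so its second-order Taylor remainder around $\mu_t$ is a product of two first-order errors). By the already-established $L^2$ estimate \eqref{Lp_estimate} with $p=2$, each such quadratic term is $O(1/N)$, and again \eqref{asssta} keeps the time-integral uniformly bounded in $T$; this gives $|\mathbb{E}[\mu_T^N(f)]-\mu_T(f)|\leq c'\|f\|/N$ for $N$ large enough (the ``$N$ large enough'' accommodating lower-order terms and the validity of the Taylor expansion). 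The whole argument is essentially the continuous-time, jump-process analogue of the discrete-time Feynman--Kac particle estimates in \cite{del2004feynman,del2013mean}, with \eqref{bounded_jumps} and the carré-du-champ formulation of Assumption \ref{ass_IPS} substituting for the discrete-time mutation/selection structure; the genuinely new technical work is the continuous-time martingale analysis and verifying that the exponential-stability constants propagate correctly.
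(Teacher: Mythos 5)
Your skeleton is the right one and matches the paper's: write $\mu_T^N(f)-\mu_T(f)$ via the propagator $\Theta_{t,T}$ and the martingale $\MM^N_T(\Theta_{\cdot,T}\overline f)$, control the predictable quadratic variation through \eqref{ass_carre} and Lemma \ref{Theta_estimation}, invoke Lemma \ref{lemma6.2} with the jump bound supplied by \eqref{bounded_jumps}, and obtain the bias by taking expectations so that only the drift-error term survives. However, there is a genuine gap in your treatment of that drift-error term. It is not ``Lipschitz in $\mu_t^N-\mu_t$'': for centered $\overline f=f-\mu_T(f)$ it reads $\int_0^T \mu_s^N(\Theta_{s,T}\overline f)\,\big(\mu_s(\VV)-\mu_s^N(\VV)\big)\,ds$, a \emph{product of two} fluctuation terms. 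Applying H\"older and Cauchy--Schwarz to it produces the self-referential inequality $I_p(N)\leq C_p\big(N^{-p/2}+I_{2p}(N)\big)$ with $I_p(N):=\sup_{\|g\|= 1}\sup_{T\geq 0}\E\big[|\mu^N_T(g)-\mu_T(g)|^{p}\big]$ --- an \emph{upward} recursion in the moment order, not a Gr\"onwall inequality, and it cannot be closed on its own. The paper closes it with a separate a priori estimate (Lemma \ref{global_control}): decompose the error through $\Phi_{t,T}(\mu_t^N)$, then balance the constant $e^{4p\|\VV\|(T-t)}$ of Lemma \ref{lemma_at} (which comes from the exponential weight $A_t^T$ used to linearize the dynamics and grows with the window length) against the geometric decay $\rho^{T-t}$ of Lemma \ref{lemma_bt} by choosing $T-t\sim\log N$, which yields the reduced rate $N^{-\delta/2}$ for some $\delta\in(0,1)$; finitely many iterations of $I_p\leq C_p(N^{-p/2}+I_{2p})$ then upgrade $\delta$ to $1$. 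This two-stage bootstrap is the genuinely hard part of the proof and is absent from your sketch.

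Your proposed alternative --- induction over unit intervals with the $k$-th interval damped by $\rho^{k}$ --- does not obviously repair this. Asymptotic stability \eqref{asssta} only forces the nonlinear flow $\mu\mapsto\Phi_{t,T}(\mu)$ close to its fixed point for $T-t$ large compared with $\log\a/\log(1/\rho)$ (cf.\ the hypothesis $T-t\geq(\log\e-\log\a)/\log\rho$ in Lemma \ref{lemma_bt}); over a unit window there is no contraction in general, and in any case the quadratic structure of the drift error would force the induction hypothesis to control ever higher moments. Your bias argument is correct in outline and coincides with the paper's (martingale vanishes in expectation, remaining term is quadratic, bound it by $I_2(N)$), but it presupposes the $L^2$ estimate at the full rate $N^{-1/2}$ and therefore inherits the same gap.
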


\begin{remark}
The constants $c_p$ and $c'$ depend on the Feynman-Kac model of interest, on the choice of the McKean model and on the considered interacting particle approximation. 
\end{remark}

The proof, presented in Section \ref{section_proofs}, is an adaptation of the results in \cite{rousset2006control} and makes use of the propagator \eqref{normalized_prop} of $\mu_t$ and the martingale characterization of $(\xi_t :t\geq 0)$.

\begin{remark}
Observe that, by Markov's inequality, Theorem \ref{thm_weak} implies 
\begin{equation*}
    \P\Big(\big|\mu_t^N(f)-\mu_t(f)\big|\geq \e\Big)\,\leq\, \frac{c_{p}\cdot \|f\|^p}{\e^p\cdot N^{p/2}}\ ,
\end{equation*}
for every $\e,\,t>0$, $f\in\CC_b(E)$, $N\geq K$ and $p\geq 2$, where $c_{p}>0$ does not depend on $N$. In particular, considering $p>2$, we can see that
\begin{equation}\label{as_conv}
    \mu_t^N(f)\,\rightarrow\,\mu_t(f)\quad \mathrm{a.s.}
\end{equation}
as $N\to\infty$, for any $f\in \CC_b(E)$, by a Borel-Cantelli argument. The existence of a countable determining class allows this to be further strengthened to the almost sure convergence of $\mu_t^N$ to $\mu_t$ in the weak topology (see, for example, \cite[Theorem 4]{mcmc:theory:SDDP18}).
\end{remark}

It is important to clarify that the estimators of the Feynman-Kac distribution $\mu_t$ given by the empirical measures $\mu^N_t$ usually have a bias, i.e. $\E[\mu^N_t(f)]\neq \mu_t(f)$ for $f\in\CC_b(E)$, which vanishes only asymptotically, as illustrated in Theorem \ref{thm_weak}. This arises from the non-linear time evolution of $\mu_t$. However, it is straightforward to derive unbiased estimators of the unnormalized measures $\nu_t$ \eqref{nu}, as shown by the following result.

\begin{prop}[Unbiased Estimators]\label{prop_unbiasedness}
Consider a sequence of particle approximations satisfying \eqref{ass_generator} and initial condition \eqref{inicon}, with empirical distributions $\mu_t^N$ \eqref{emea}. Then, the \textit{unnormalized empirical measure} 
\begin{equation*}
    \nu_t^N (f):=\nu_t^N (1)\mu_t^N (f)\quad\mbox{with}\quad\nu_t^N (1) :=\exp \Big( \int_0^t \mu_s^N  (\VV  )ds\Big)\ ,
\end{equation*} 
is an unbiased estimator of the unnormalized $t$-marginal $\nu_t$ \eqref{nu}, i.e.
\begin{equation}\label{gronwall1}
\E \big[ \nu_t^N (f)\big] =\nu_t (f)\quad\mbox{for all }t\geq 0 \mbox{ and }N\geq 1\ ,
\end{equation}
for any $f\in\CC_b(E)$.
\end{prop}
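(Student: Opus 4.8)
The plan is to show that the scalar function $t\mapsto\E\big[\nu_t^N(f)\big]$ solves exactly the linear integral equation obeyed by $t\mapsto\nu_t(f)$, with the same value at $t=0$, and then to conclude by uniqueness. The initial condition is immediate: $\nu_0^N(1)=1$ and $\mu_0^N(f)=\frac1N\sum_{i=1}^N f(\xi_0^i)$, so the i.i.d.\ assumption \eqref{inicon} gives $\E\big[\nu_0^N(f)\big]=\mu_0(f)=\nu_0(f)$.

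For $t>0$ I would write $\nu_t^N(f)=Z_t\,F(\xi_t)$ with $F(\underline{x}):=m(\underline{x})(f)\in\CC_b(E^N)$ and $Z_t:=\exp\!\big(\int_0^t m(\xi_s)(\VV)\,ds\big)$, and apply Dynkin's formula to the $N$-particle jump process $\xi_t$, whose generator is $\overline{L}^N$. Since $Z_t$ is absolutely continuous with $\frac{d}{dt}Z_t=Z_t\,m(\xi_t)(\VV)$, an integration by parts (there is no covariation term, $Z$ being continuous of finite variation) shows that
\[
Z_t F(\xi_t)-F(\xi_0)-\int_0^t Z_s\big(\overline{L}^N F(\xi_s)+F(\xi_s)\,m(\xi_s)(\VV)\big)\,ds
\]
is a martingale; the stochastic integral against the martingale part of $F(\xi_t)$ is a true (indeed $L^2$) martingale on $[0,t]$ because $Z$ is bounded there and $F$, $\overline{L}^N F$ are bounded. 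Taking expectations,
\[
\E\big[\nu_t^N(f)\big]=\mu_0(f)+\int_0^t\E\Big[Z_s\big(\overline{L}^N F(\xi_s)+F(\xi_s)\,m(\xi_s)(\VV)\big)\Big]\,ds .
\]

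The key step is then the algebraic identity $\overline{L}^N F(\underline{x})+F(\underline{x})\,m(\underline{x})(\VV)=m(\underline{x})(\LL^\VV f)$. Indeed, by \eqref{ass_generator} and $\overline{\LL}_\mu=\LL+\widetilde{\LL}_\mu$ one has $\overline{L}^N F(\underline{x})=m(\underline{x})(\LL f)+m(\underline{x})\big(\widetilde{\LL}_{m(\underline{x})}(f)\big)$, while the defining property \eqref{general_mcKean} of any McKean generator gives $m(\underline{x})\big(\widetilde{\LL}_{m(\underline{x})}(f)\big)=m(\underline{x})(\VV f)-m(\underline{x})(f)\,m(\underline{x})(\VV)$; adding $F(\underline{x})\,m(\underline{x})(\VV)=m(\underline{x})(f)\,m(\underline{x})(\VV)$ cancels the nonlinear term and leaves $m(\underline{x})(\LL f+\VV f)$. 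Since $Z_s\,m(\xi_s)(\LL^\VV f)=\nu_s^N(\LL^\VV f)$, the previous display becomes
\[
\E\big[\nu_t^N(f)\big]=\nu_0(f)+\int_0^t\E\big[\nu_s^N(\LL^\VV f)\big]\,ds,\qquad f\in\CC_b(E),
\]
whereas integrating \eqref{nu_eq} gives $\nu_t(f)=\nu_0(f)+\int_0^t\nu_s(\LL^\VV f)\,ds$. Putting $D_t(f):=\E[\nu_t^N(f)]-\nu_t(f)$, these give $D_0\equiv0$ and $D_t(f)=\int_0^t D_s(\LL^\VV f)\,ds$, where each $D_t$ is a linear functional on $\CC_b(E)$ with $\sup_{s\le t}\sup_{\|f\|\le1}|D_s(f)|<\infty$ (as $|\nu_s^N(f)|\le e^{s\|\VV\|}\|f\|$ and $\nu_s$ is a finite measure). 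Because $\l$ and $\VV$ are bounded, $\LL^\VV=\LL+\VV$ is a bounded operator on $\CC_b(E)$, so Gronwall's inequality forces $D_t\equiv0$ for all $t\ge0$, which is \eqref{gronwall1}. One could equally avoid Gronwall by noting that $t\mapsto\E\big[\nu_t^N\big(P^\VV(T-t)f\big)\big]$ has vanishing time derivative by the same computation and then comparing its values at $t=0$ and $t=T$.

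The main obstacle is making the Dynkin/integration-by-parts step fully rigorous: one must verify that $\nu_t^N(f)=Z_tF(\xi_t)$ is a semimartingale for which the compensator identity above holds and whose local-martingale part is a genuine martingale on compact time intervals. This is precisely where the structural hypotheses enter — $F=m(\cdot)(f)$ lies in the full domain $\CC_b(E^N)$ of $\overline{L}^N$, and $0\le\int_0^t m(\xi_s)(\VV)\,ds\le t\|\VV\|$ keeps $Z$ bounded on $[0,t]$ — but it still needs some care, e.g.\ a short localisation argument or a direct verification through the jump representation of $\xi$. Once that is in place, the cancellation producing $m(\cdot)(\LL^\VV f)$ and the Gronwall uniqueness are routine; in particular \eqref{ass_carre} and \eqref{bounded_jumps} play no role here, consistently with the hypotheses stated in the Proposition.
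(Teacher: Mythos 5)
Your proof is correct and follows essentially the same route as the paper's: derive the evolution equation for $\E[\nu_t^N(f)]$, use \eqref{ass_generator} together with the McKean property \eqref{general_mcKean} to cancel the nonlinear term and obtain $\frac{d}{dt}\E[\nu_t^N(f)]=\E[\nu_t^N(\LL^\VV f)]$, then conclude by a Gronwall/uniqueness argument against \eqref{nu_eq}. The only difference is that you make the integration-by-parts/Dynkin step explicit where the paper states the evolution equation directly, which is a welcome but not substantive refinement.
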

\begin{proof}
First observe that $\E\big[\nu_0^N(f)\big]=\nu_0(f)$. Indeed, $\nu^N_0(f)=\mu^N_0(f)$ is the average of $N$ i.i.d. random variables with law $f_\#\mu_0$, and $\mu_0$ corresponds to the initial distribution of $\nu_t=\nu_{t,\mu_0}$ \eqref{nu}.

Note that $\E\big[\nu_t^N(f)\big]$ satisfies the evolution equation
\begin{equation}\label{vte}
\frac{d}{dt}\E \big[ \nu_t^N (f)\big] =\E\Big[ \nu_t^N (f)\mu_t^N (\VV  )+\nu_t^N (1)\overline{L}^N \mu_t^N (f)\Big]\ .
\end{equation}
Moreover, by assumption \eqref{ass_generator} and using the characterization of $\overline{\LL}_\mu$ \eqref{general_mcKean}-\eqref{mcKean_generator}, we have
\[
\overline{L}^N\mu_t^N (f)=\mu_t^N( \LL  f)+\mu_t^N(\VV \, f)-\mu_t^N(\VV )\cdot\mu_t^N(f)\ .
\]

Inserting into \eqref{vte}, this simplifies to
\[
\frac{d}{dt}\E \big[ \nu_t^N (f)\big] =\E\big[ \nu_t^N (\LL  f)+\nu_t^N (\VV  f)\big]\ .
\]
Since $\LL  +\VV $ also generates the time evolution of $\nu_t (f)$ \eqref{nu_eq}, a simple Gronwall argument with $\E \big[ \nu_0^N (f)\big] =\nu_0 (f)$ gives \eqref{gronwall1}.

\end{proof}

A generic version of interacting particle systems, directly related to the above McKean representations has been studied in the applied probability literature in great detail \cite{del2003particle,rousset2006control}, providing quantitative control on error bounds for convergence. After reviewing those results in the next subsection, we present a different approach taken in the theoretical physics literature under the name of cloning algorithms \cite{giardina,giardina2011simulating}, which provides some computational advantages but lacks general rigorous error control so far \cite{nemoto2017finite, hidalgo2017finite}. 

\subsection{Mean Field Particle Approximation}\label{subsection_MF}

The most basic particle approximation is simply to run the McKean dynamics in parallel on each of the particles, replacing the distribution $\mu_t$ by the empirical measure. Formally, the mean field particle model $(\xi_t :t\geq 0)$ with $\xi_t=(\xi^i_t :i=1,\dots,N)$ associated to a McKean generator $\overline{\LL}_{\mu_t}$ \eqref{mcKean_generator}, is a Markov process on $E^N$ with homogeneous infinitesimal generator $\overline{L}^N$ defined by
\begin{equation}\label{overlineLLN}
   \overline{L}^N (F)(x^1,\dots,x^N):=\sum_{i=1}^N \overline{\LL}_{m(\underline{x} )}^{(i)}(F)(x^1,\dots,x^i,\dots,x^N),
\end{equation}
for any $F\in\CC_b(E^N)$. Here $\overline{\LL}_{m(\underline{x})}^{(i)}$ denotes the McKean generator $\overline{\LL}_{m(\underline{x})}$ \eqref{mcKean_generator} acting on the function $x^i\mapsto F(x^1,\dots,x^i,\dots,x^N)$, where the dependence on $\mu$ has been replaced by the empirical distribution $m(\underline{x})$.

In analogy to the decomposition $\overline{\LL}_\mu =\LL +\widetilde{L}_\mu$ in \eqref{mcKean_generator}, the generator \eqref{overlineLLN} can be decomposed as $\overline{L}^N =L^N +\widetilde{L}^{N}$ with
\begin{align}
L^N (F)(\underline{x})&:=\sum_{i=1}^N \LL^{(i)}  (F)(\underline{x})\ ,\label{Lmut}\\
\widetilde{L}^N (F)(\underline{x})&:=\sum_{i=1}^N \widetilde{\LL}_{m(\underline{x})}^{(i)}  (F)(\underline{x})\ ,\label{Lsel}
\end{align}
where $\LL^{(i)}$ and $\widetilde{\LL}_{m(\underline{x})}^{(i)}$ stand respectively for the operators $\LL $ and $\widetilde{\LL}_{m(\underline{x})}$ acting on the function $x^i\mapsto F(\underline{x})$, i.e. only on particle $i$.

Moreover, using representation \eqref{widetildeLL} for $\widetilde{\LL}_\mu$, we can write
\begin{equation}\label{Lkill_i}
    \widetilde{\LL}_{m(\underline{x})}^{(i)} (F)(\underline{x})\,=\, \frac{1}{N}\,\sum_{j=1}^N \widetilde{W}(x_i,\,x_j)\,\big( F(\underline{x}^{i,x_j})-F(\underline{x})\big)  \ ,
\end{equation}
with $\underline{x}^{i,y}:=(x_1,\dots,x_{i-1},y,x_{i+1},\dots,x_N)$, which introduces an interaction between the particles. In this decomposition, \eqref{Lmut} generates the so-called
\textit{mutation dynamics}, where the particles evolve independently under the dynamics given by the infinitesimal generator $\LL$ of the original process, whereas \eqref{Lsel} generates the \textit{selection dynamics}, which leads to mean-field interactions between particles. With \eqref{Lkill_i} the state of particle $i$ gets replaced by that of particle $j$ with rate $\frac{1}{N}\widetilde{W}(x_i,\,x_j)$. The total selection rate in the particle approximation is $\frac{1}{N}\sum_{i,j=1}^N \widetilde{W}(x_i,\,x_j)$, and depends on the McKean representation, in particular the choice of $\widetilde{\LL}_\mu$ in \eqref{widetildeLL}.

From general practical experience it is favourable to minimize the total selection rate in order to improve the estimator's asymptotic variance; it's widely understood in the SMC literature that eliminating unnecessary selection events can significantly improve estimator variances, see, for example, \cite[Section 7.2.1, 7.4.2]{del2004feynman} and \cite{Gerber2019}. For mean-field particle approximations this suggests that \eqref{mcKean2} is preferable to \eqref{mcKean1} since
\[
\widetilde{W} (x,y)=\big( \VV(y)-\VV (x)\big)^+ \leq \widetilde{W}_c (x,y)=\big( \VV(x)-c\big)^- +\big( \VV(y)-c\big)^+
\]
for all $x,y\in E$ and $c\in\R$. In view of Lemma \ref{sufficient_conditions}, minimizing the total selection rate pertains to maximizing $\sum_{i=1}^N \VV(x_i)$, and $\VV$ can be interpreted as a fitness function. With \eqref{mcKean2} every selection event therefore increases the fitness of the particle ensemble, which is not necessarily the case with \eqref{mcKean1}, and there are even more optimal choices than \eqref{mcKean2} in that sense as discussed in \cite{angeli2018rare}\footnote{As a side remark, the mutation part of the McKean dynamics (which is fixed for mean-field particle particle approximations by \eqref{Lmut}), can naturally also decrease the fitness of the ensemble.}. On the other hand, depending on the particular application, implementing particle approximations with lower total selection rate could be computationally more expensive, leading to a trade-off in lower values for $N$ to be accessible in practice. This is discussed in \cite{angeli2018rare} for a particular example, and is not the subject of this paper.

In order to motivate the choice of the cloning algorithm in the next subsection which is based on the selection rates \eqref{mcKean1}, we note that one can write \eqref{Lsel} as
\begin{align}
\widetilde{L}^N (F)(\underline{x})=&\sum_{i=1}^N \big( \VV (x_i)-c\big)^- \frac{1}{N} \sum_{j=1}^N \big( F(\underline{x}^{i,x_j})-F(\underline{x})\big) \nonumber\\
&+\sum_{i=1}^N \big( \VV (x_i)-c\big)^+ \frac{1}{N} \sum_{j=1}^N \big( F(\underline{x}^{j,x_i})-F(\underline{x})\big)\ ,\label{killclone}
\end{align}
using a change of summation indices in the second term. With the above discussion this can be interpreted as follows: If particle $i$ is less fit than level $c$ it is \textit{killed} and replaced by a uniformly chosen particle $j$, and if it is fitter than $c$ it \textit{cloned}, replacing a uniformly chosen particle $j$.

Observe that, by definition of $\overline{L}^N$ \eqref{overlineLLN}, for any function $F$ on $E^N$ of the form $F(\underline{x})=m(\underline{x})(f)$, with $f\in \CC_b(E)$, we have that
\begin{align}
    \overline{L}^{N} (F)(\underline{x})\,&=\, m(\underline{x})\big(\overline{\LL}_{m(\underline{x})} (f)\big)\ ,\label{IPSgenerator}\\
    \Gamma_{\overline{L}^{N}}( F,\,F)(\underline{x})\,&=\,\frac{1}{N}m(\underline{x})\big(\Gamma_{\overline{\LL}_{m(\underline{x})}}(f,\,f)\big)\ ,\label{IPScarre}
\end{align}
thus conditions \eqref{ass_generator}-\eqref{ass_carre} are satisfied. 

Analogous relations hold also for the individual mutation and cloning parts of the generator. Since generators are linear, the identity \eqref{IPSgenerator} is immediate. The carr\'{e} du champ \eqref{IPScarre} is quadratic in $F$, but off-diagonal terms in the corresponding double sum turn out to vanish in a straightforward computation, leading to the additional factor $1/N$. 
Furthermore, by construction, for almost every realization $\xi_t$, $t>0$, of the mean field particle approximation, there exists at most one particle $i$ such that $\xi^i_t\neq \xi^i_{t-}$, thus condition \eqref{bounded_jumps} is satisfied with $K=1$. Therefore, Theorem \ref{thm_weak} holds and provides $L^p$-error and bias estimates of order $1/\sqrt{N}$ and $1/N$ respectively, in accordance with already established results, e.g. in \cite{rousset2006control,del2003particle,del2013mean}.

\subsection{The Cloning Algorithm}\label{subsection_cloning}
Cloning algorithms have been proposed in the theoretical physics literature \cite{giardina, lecomte2007numerical} for evaluating large deviation functions associated to Markov processes similar to the mean field system \eqref{overlineLLN}, using the same mutation dynamics. While selection and mutation events are independent in the latter due to the additive structure of $\overline{L}^N$ in \eqref{Lmut} and \eqref{Lsel}, in cloning algorithms both are combined to reduce computational cost. We focus the exposition on a variant of the algorithm proposed in \cite{lecomte2007numerical}, but other continuous-time versions can be analysed analogously. This cloning algorithm is constructed from the McKean model $\overline{\LL}_{\mu}$  \eqref{mcKean_generator} with selection rates $\widetilde{W}_c(x,y)= \big(\VV(x)-c\big)^- + \big(\VV(y)-c\big)^+$ as in  \eqref{mcKean1}, and we denote the associated McKean generator by
\begin{equation}\label{lbarmuc}
    \overline{\LL}_{\mu ,c} (f)(x):=\LL (f)(x)+\int_E \widetilde{W}_c (x,y)\big( f(y)-f(x)\big) \mu (dy)\ .
\end{equation}
We will use in particular the killing/cloning interpretation introduced in \eqref{killclone}. We recall that the overall escape rate and probability kernel of the original dynamics $\LL$ are denoted respectively by $\lambda(x)$ and $p(x,dy)$.  




The infinitesimal description of the cloning algorithm as a continuous-time Markov process on the state space $E^N$ is given by the generator
\begin{align}
    \overline{L}^N_c (F)(\underline{x})\,=&\,\sum_{i=1}^N {\l} (x_i)\,\int_E p(x_i,\,dy)\sum_{A\in \NN}\,\pi_{x_i}(A)\cdot \,\big( F(\underline{x}^{A,x_i;\,i,y})-F(\underline{x}) \big)\nonumber\\
    &+\sum_{i=1}^N \big(\VV(x_i)-c\big)^- \frac{1}{N}\sum_{j=1}^N \big( F(\underline{x}^{i,x_j})-F(\underline{x}) \big) ,
    \label{genL}
\end{align}
for any $F\in\CC_b (E^N)$ and $\underline{x}\in E$. Here $\NN$ is the set of all subsets of $N$ particle indices, $\underline{x}^{A,w}$ denotes the vector $(z_1,\dots,z_N)\in E^N$, with
\begin{align*}
z_j:=
    \begin{cases}
    x_j & j\not\in A\\
    w & j\in A,
    \end{cases}
\end{align*}
and, similarly, $\underline{x}^{A,w;\,i,y}$ denotes the vector $(z_1,\dots,z_N)\in E^N$ with
\begin{align*}
z_j:=
    \begin{cases}
    x_j & j\not\in A,\,j\neq i\\
    w & j\in A,\,j\neq i\\
    y &j=i,
    \end{cases}
\end{align*}
for any $w,y\in E$. 
Cloning events are now coupled with mutation, and if $\VV(x_i)> c$, a non-empty set $A$ of particles is chosen at random from the ensemble with probability $\pi_{x_i}(A)$ and every particle $j\in A$ is replaced by a clone of $i$, before particle $i$ mutates to a new state $y\in E$. If $\VV(x_i)\leq c$ we set $\pi_{x_i} (A)=\delta_{A,\emptyset}$, so that no cloning occurs. Further properties of the cloning distribution $\pi_x(\cdot)$, which is the main distinctive feature of this algorithm, are discussed below. 
The killing part in the second line runs independently and remains unchanged from \eqref{killclone}. The algorithm is often applied in situations with $\VV (x)\geq c$ for all $x\in E$ (in particular also with $c=0$), leaving cloning coupled with mutation as the only selection events.

In order to simplify the presentation, we make some further assumptions on $\pi_{x} (A)$, which are all satisfied by common choices in the theoretical physics literature. 
The probability of choosing a set $A$ depends only on its size $|A|$ and not on its elements, i.e. for any $x\in E$ 
 \begin{align}\label{pxA}
 \pi_{x}(A)={\pi_{x,|A|}}\Big/ {\binom{N}{|A|}}\quad &\mbox{with}\quad  \pi_{x,0} ,\ldots ,\pi_{x,N} \mbox{ such that }\sum_{n=0}^N \pi_{x,n}=1\nonumber\\
 & \mbox{and}\quad \pi_{x,n} =\delta_{n,0} \mbox{ if }\VV (x)\leq c\ .    
 \end{align}
 Denote the mean and second moment of this distribution by
\begin{equation}\label{widetilde_Q}
     M(x):=\sum_{n=1}^N n \pi_{x,n},\quad Q(x):=\sum_{n=1}^N n^2 \pi_{x,n}.
\end{equation}
Of course, $\pi_{x,.}$ and its moments also depend on $N$ and $c$, which we omit in the notation for simplicity. In order to ensure that the third condition in Assumption \ref{ass_IPS}, namely \eqref{bounded_jumps}, is satisfied, we assume that the support of $\pi_{x,.}$ is uniformly bounded in $N$, i.e.
\begin{equation}\label{bsup}
    \mbox{there exists $K>0$ such that $\pi_{x,k} =0$ for all $k> K$, $x\in E$}\ .
\end{equation}
 Note that this implies that also $M(x)$ and $Q(x)$ are uniformly bounded, i.e. $M,Q\in\CC_b (E)$. We further assume $N\geq K$, i.e. $N$ is large enough so that the process \eqref{genL} is well defined.

The most common choice in the physics literature (see, e.g., the recent summary in \cite{perez2019sampling}) for the distribution $\pi_{x,.}$ is
\begin{align}\label{common_choice}
    \pi_{x,n}=
    \begin{cases}
    M(x)-\lfloor M(x)\rfloor\quad & n=\lfloor M(x)\rfloor + 1,\\
    \lfloor M(x)\rfloor + 1 - M(x)\quad & n=\lfloor M(x)\rfloor,\\
    0\quad & \mathrm{otherwise}.
    \end{cases}
\end{align}
This corresponds to a binary distribution on the two integers nearest to the prescribed mean, and minimizes the second moment $Q$ of the distribution for a given mean. Note that if $M(x)$ is an integer, $\pi_{x,n} =\delta_{n,M(x)}$ concentrates, which includes the case $M(x)=0$.


The next two results assure respectively that condition \eqref{ass_generator} and condition \eqref{ass_carre} in Assumption \ref{ass_IPS} are satisfied for the cloning algorithm, so we can apply Theorem \ref{thm_weak}. The only condition is to choose $M(x)$ such that each particle $i$ produces on average $\big( \VV (x_i )-c\big)^+$ clones per unit time, in accordance with the second term in \eqref{killclone}.

\begin{prop}\label{prop_L}
Consider the cloning generator $\overline{L}^N_c$ \eqref{genL} with
$\pi_{x}(A)$ as in \eqref{pxA} and \eqref{bsup}, such that the mean of the cloning size \eqref{widetilde_Q} is 
\begin{equation}\label{clomean}
    M(x)=\frac{\big(\VV (x)-c\big)^+}{{\l}(x)}\geq 0\quad\mbox{for all }x\in E\ ,
\end{equation}
and $\sup_{x\in E} M(x)<\infty$. 
Then, for any test function of the form $F(\underline{x})=m(\underline{x})(f)$, with $f\in\CC_b (E)$ and $N$ large enough, we get
\begin{align*}
\overline{L}^N_c F(\underline{x})\,= \,m(\underline{x})\big(\overline{\LL}_{m(\underline{x}),c} (f)\big)\ ,
\end{align*}
where $\overline{\LL}_{m(\underline{x}),c} $ is the McKean generator given in \eqref{lbarmuc}.
\end{prop}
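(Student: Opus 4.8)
The plan is to expand the action of the cloning generator $\overline{L}^N_c$ \eqref{genL} on a mean-field observable $F(\underline{x})=m(\underline{x})(f)=\frac{1}{N}\sum_{k}f(x_k)$ and show the result equals $m(\underline{x})\big(\overline{\LL}_{m(\underline{x}),c}(f)\big)$, using the explicit form \eqref{lbarmuc} of the McKean generator. I would treat the two lines of \eqref{genL} separately. The second (killing) line is already in the same form as the killing part of \eqref{killclone}, so it contributes $\frac{1}{N}\sum_i(\VV(x_i)-c)^-\big(f(x_i)\,?\big)$-type terms; more precisely, replacing particle $i$ by $x_j$ changes $F$ by $\frac{1}{N}(f(x_j)-f(x_i))$, so the second line gives $\frac{1}{N}\sum_i(\VV(x_i)-c)^-\frac{1}{N}\sum_j(f(x_j)-f(x_i)) = m(\underline{x})\Big(\int_E(\VV(x)-c)^-(f(y)-f(x))\,m(\underline{x})(dy)\Big)$, matching the $(\VV(x)-c)^-$ part of the selection integral in \eqref{lbarmuc}.

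The first line is the substantive part. For a clone-set $A$ with $i\notin A$ of size $|A|=n$, the transition $\underline{x}\mapsto\underline{x}^{A,x_i;\,i,y}$ replaces each $x_j$, $j\in A$, by $x_i$ and replaces $x_i$ by $y$. The induced change in $F$ is $\frac{1}{N}\big[(f(y)-f(x_i)) + n\,(f(x_i)-f(x_i))\big]$ --- wait, here I must be careful: the clones become copies of $x_i$ \emph{before} $i$ mutates, so each $j\in A$ contributes $f(x_i)-f(x_j)$. Thus $F(\underline{x}^{A,x_i;\,i,y})-F(\underline{x}) = \frac{1}{N}\Big[(f(y)-f(x_i)) + \sum_{j\in A,\,j\neq i}(f(x_i)-f(x_j))\Big]$. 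Now I sum over $A$ against $\pi_{x_i}(A)$. Because $\pi_{x_i}(A)$ depends only on $|A|$ and is exchangeable over the indices, $\sum_{A}\pi_{x_i}(A)\sum_{j\in A,\,j\neq i}(f(x_i)-f(x_j))$ factorizes: the expected number of indices $j\neq i$ lying in a random $A$ of size $n$ is $n\cdot\frac{N-1}{N}$ (or, since $\VV(x_i)\le c\Rightarrow M(x_i)=0$, this is automatically zero there), and averaging over $j\neq i$ uniformly produces $\frac{N-1}{N}\,\frac{1}{N-1}\sum_{j\neq i}(f(x_i)-f(x_j))$. Collecting: the first line equals
\begin{align*}
\frac{1}{N}\sum_i \lambda(x_i)\int_E p(x_i,dy)(f(y)-f(x_i)) + \frac{1}{N}\sum_i \lambda(x_i)M(x_i)\frac{1}{N}\sum_{j}(f(x_i)-f(x_j)),
\end{align*}
where the $j=i$ term vanishes so extending the inner sum to all $j$ is harmless, and the $\frac{N-1}{N}\cdot\frac{1}{N-1}$ combine to $\frac{1}{N}$ over $j$. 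The first piece is exactly $m(\underline{x})(\LL f)$, and inserting $\lambda(x_i)M(x_i)=(\VV(x_i)-c)^+$ from \eqref{clomean}, the second piece is $m(\underline{x})\Big(\int_E(\VV(x)-c)^+(f(x)-f(y))\,m(\underline{x})(dy)\Big)$.

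Adding the two lines: the killing term contributes $\int(\VV(x)-c)^-(f(y)-f(x))\,m(dy)$ and the cloning term contributes $\int(\VV(x)-c)^+(f(x)-f(y))\,m(dy) = -\int(\VV(x)-c)^+(f(y)-f(x))\,m(dy)$. Hmm --- the sign here needs reconciling with \eqref{mcKean1}: in \eqref{killclone} the cloning event replaces particle $j$ by a copy of $i$, i.e. $F(\underline{x}^{j,x_i})$, giving change $\frac1N(f(x_i)-f(x_j))$, consistent with what I derived; and \eqref{lbarmuc}--\eqref{mcKean1} with $\widetilde W_c(x,y)=(\VV(x)-c)^-+(\VV(y)-c)^+$ yields $\int[(\VV(x)-c)^-+(\VV(y)-c)^+](f(y)-f(x))\,m(dy)$. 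After the relabelling $x\leftrightarrow y$ inside the $m\otimes m$ double integral that is implicit in applying $m(\underline{x})(\cdot)$ to $\overline{\LL}_{m,c}$ --- exactly the index swap used in passing from \eqref{Lsel} to \eqref{killclone} --- the $(\VV(y)-c)^+$ term becomes $(\VV(x)-c)^+(f(x)-f(y))$, matching my cloning contribution. So the terms coincide and $\overline{L}^N_c F = m(\underline{x})(\overline{\LL}_{m(\underline{x}),c}(f))$.

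\textbf{Main obstacle.} The only delicate point is the combinatorial averaging over the random clone-set $A$: one must verify carefully that $\sum_A \pi_x(A)\sum_{j\in A,j\neq i}g(x_j)$ equals $M(x)\cdot\frac{1}{N}\sum_{j\neq i}g(x_j)$ up to the $\frac{N-1}{N}$ factor (here using \eqref{pxA} so that $\pi_x(A)=\pi_{x,|A|}/\binom{N}{|A|}$ is uniform over sets of fixed size), and to track that this $\frac{N-1}{N}$ recombines with the uniform average over $j\neq i$ to give precisely the empirical measure $m(\underline{x})$ with an extra $\frac1N$. One also needs the uniform boundedness $\sup_x M(x)<\infty$ and \eqref{bsup} only to guarantee that all sums are finite and the process \eqref{genL} is well-defined for $N\geq K$; these do not enter the algebraic identity. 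I would also note explicitly that terms where $i\in A$ do not arise because in \eqref{genL} the clone-set is chosen from indices and particle $i$ is the one mutating --- or, if the convention allows $i\in A$, that such a term contributes nothing since $z_i=y$ overrides. Everything else is the bookkeeping of which particles change and by how much $f$ changes, entirely parallel to the mean-field computation \eqref{IPSgenerator}.
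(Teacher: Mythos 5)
Your proposal is correct and follows essentially the same route as the paper's proof: decompose the combined cloning/mutation transition into a pure mutation increment plus a cloning increment, use the exchangeability of $\pi_{x}(A)$ in \eqref{pxA} to reduce the sum over clone sets to the marginal inclusion probability $M(x_i)/N$ per index, substitute $\lambda(x_i)M(x_i)=(\VV(x_i)-c)^+$ from \eqref{clomean}, and swap summation indices in the cloning term to recover the $\widetilde{W}_c$ form of \eqref{lbarmuc}. The combinatorial point you flag as the main obstacle is exactly the identity \eqref{pim1} in the paper, and your handling of the $j=i$ and $i\in A$ edge cases is sound.
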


\begin{remark}
Note that $\sup_{x\in E} M(x)<\infty$ is essential for \eqref{bsup} and \eqref{bounded_jumps}, and a simple sufficient condition is for the escape rates to be uniformly bounded below, i.e. $\inf_{x\in E} \lambda (x)>0$.
\end{remark}

\begin{proof}
We start by considering the first term in the expression of $\overline{L}^N_c$ \eqref{genL}. Observe that with $F(\underline{x})=m(\underline{x})(f)$,
\begin{align}
    F(\underline{x}^{A,x_i;\,i,y})-F(\underline{x})\,&=\,\frac{1}{N}\big(f(y)-f(x_i)\big)\,+\,\frac{1}{N}\sum_{j\in A} \big( f(x_i)-f(x_j) \big)\nonumber\\
    &=\, \big( F(\underline{x}^{i,y})-F(\underline{x})\big)\,+\, \big(F(\underline{x}^{A,x_i})-F(\underline{x}) \big)\ .\label{deco}
\end{align}
Thus, we can write
\begin{align*}
    \int_{y\in E}&{p}(x_i,\,dy) \sum_{A\in \NN}\pi_{x_i}(A) \,\big( F(\underline{x}^{A,x_i;\,i,y})-F(\underline{x}) \big)\\
    &=\,\int_{y\in E} {p}(x_i,\,dy)\sum_{A\in \NN} \pi_{x_i}(A)\,\bigg( \big( F(\underline{x}^{i,y})-F(\underline{x}) \big) + \big(F(\underline{x}^{A,x_i})-F(\underline{x})\big)\bigg)\\
    &=\, \int_E {p}(x_i,dy)\big( F(\underline{x}^{i,y})-F(\underline{x}) \big) \,+\, \sum_{A\in\NN}\pi_{x_i}(A)\,\big(F(\underline{x}^{A,x_i})-F(\underline{x})\big)\ .
\end{align*}
Moreover, by \eqref{pxA}, we have that, for any $j\in\{1,\dots,N\}$,
\begin{equation}\label{pim1}
    \sum_{{A\in\NN|j\in A}} \pi_{x_i}(A) \,=\, \sum_{n=1}^N \frac{\pi_{x_i,n}}{\binom{N}{n}}\cdot \binom{N-1}{n-1}\,=\, \frac{M(x_i)}{N}\ .
\end{equation}
Therefore, 
\begin{align}
    \sum_{A\in\NN}\pi_{x_i}(A)\,\big(F(\underline{x}^{A,x_i})-F(\underline{x})\big)
    \,&=\,\frac{1}{N}\sum_{j=1}^N M(x_i)\,\big( f(x_i)-f(x_j)\big)\nonumber\\
    &=\, \frac{1}{N}\sum_{j=1}^N \frac{\big(\VV(x_i)-c\big)^+}{\lambda(x_i)}\,\big( f(x_i)-f(x_j)\big)\ .\label{pim2}
\end{align}
Thus, \eqref{genL} can be rewritten as
\begin{align*}
\overline{L}^N_c (F)(\underline{x})\,=&\frac{1}{N}\sum_{i=1}^N {\l} (x_i)\,\int_E p(x_i,\,dy)\big( f(y)-f(x_i )\big)\nonumber\\
&+\frac{1}{N^2}\sum_{i,j=1}^N \Big(\big(\VV(x_j)-c\big)^+ +\big(\VV(x_i)-c\big)^- \Big) \big( f(x_j )-f(x_i ) \big)\nonumber\\
&=m(\underline{x})\big( \overline{\LL}_{m(\underline{x}),c}(f)\big)
\end{align*}
by changing summation variables in the cloning term and using \eqref{lbarmuc}.


\end{proof}

\begin{prop}\label{prop_carre}
Let $\overline{L}^N_c$ be a cloning generator satisfying the conditions in Proposition \ref{prop_L}.
Then, for any test function of the form $F(\underline{x})=m(\underline{x})(f)$, with $f\in\CC_b(E)$,
\begin{align*}
\Gamma_{\overline{L}^N_c}(F,\,F)(\underline{x})\,=\,\frac{1}{N}\,m(\underline{x})\Big( G_{m(\cdot)}(f,f)\Big)\, +\, \Ocarre{N}{\underline{x}}{f}\ ,
\end{align*}
as $N\to\infty$, where $\|\Ocarre{N}{.}{f}\|\,\leq\, C\,\frac{\|f\|^2}{N^2}$ 
for some constant $C>0$ independent of $f$ and $N$, and
\begin{align}
    G_\mu (f,f)(x)\,=\,& \Gamma_{\overline{\LL}_{\mu,c}} (f,f) (x)\,+\, \lambda(x)\,\big(Q(x)-M(x)\big)\cdot\big(\ell_{\mu} (f)(x)\big)^2\nonumber\\
    &-\,\frac{2}{\l(x)} \LL (f)(x)\cdot \widetilde{\LL}^{t}_{\mu,c}(f)(x)\ ,\label{G_cloning}
\end{align}
with
\begin{equation*}
    \ell_\mu (f)(x):= \int_E \big(f(y)-f(x)\big)\,\mu(dy)\ ,
\end{equation*}
and
\begin{equation*}
    \widetilde{\LL}^{t}_{\mu,c}(f)(x):= \big(\VV (x)-c\big)^+\int_E \big(f(y)-f(x)\big)\,\mu(dy)\ .
\end{equation*}
\end{prop}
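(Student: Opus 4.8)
The plan is to compute the carré-du-champ $\Gamma_{\overline{L}^N_c}(F,F)$ directly from the generator \eqref{genL}, exploiting the fact that for a pure jump generator of the form $L(F)(\underline{x}) = \sum_{\text{jumps }b} r_b(\underline{x})\,\big(F(\underline{x}^b)-F(\underline{x})\big)$ one has $\Gamma_L(F,F)(\underline{x}) = \sum_b r_b(\underline{x})\,\big(F(\underline{x}^b)-F(\underline{x})\big)^2$. So first I would write out, for a mean-field observable $F(\underline{x})=m(\underline{x})(f)$, the increment associated with each possible jump of the cloning algorithm: a coupled ``clone-set-$A$-then-mutate-$i$-to-$y$'' event with rate $\lambda(x_i)p(x_i,dy)\pi_{x_i}(A)$ and increment given by the decomposition \eqref{deco}, namely $\frac1N(f(y)-f(x_i)) + \frac1N\sum_{j\in A}(f(x_i)-f(x_j))$; and a killing event with rate $\frac1N(\VV(x_i)-c)^-$ replacing particle $i$ by particle $j$, with increment $\frac1N(f(x_j)-f(x_i))$. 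Squaring these increments and summing against the rates is the core computation.

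The key organizing principle is book-keeping in powers of $1/N$. The squared clone-mutate increment expands into three groups: the ``pure mutation'' square $\frac1{N^2}(f(y)-f(x_i))^2$, the ``pure cloning'' square $\frac1{N^2}\big(\sum_{j\in A}(f(x_i)-f(x_j))\big)^2$, and the cross term $\frac{2}{N^2}(f(y)-f(x_i))\sum_{j\in A}(f(x_i)-f(x_j))$. For the pure-mutation part, summing $\lambda(x_i)\int p(x_i,dy)(\cdot)^2$ over $i$ and dividing appropriately reproduces the mutation contribution to $\Gamma_{\overline{\LL}_{\mu,c}}$ at order $1/N$. For the cross term, the $y$-integral factors out $\LL(f)(x_i)/\lambda(x_i)$ (after pulling out $\lambda(x_i)$), while the $A$-sum produces $\frac{M(x_i)}{N}\sum_j(f(x_i)-f(x_j))$ exactly as in \eqref{pim1}--\eqref{pim2}; this is where the term $-\frac{2}{\lambda(x)}\LL(f)(x)\cdot\widetilde{\LL}^t_{\mu,c}(f)(x)$ in \eqref{G_cloning} comes from, recalling $M(x)\lambda(x)=(\VV(x)-c)^+$. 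For the pure-cloning square one must expand $\big(\sum_{j\in A}(f(x_i)-f(x_j))\big)^2 = \sum_{j\in A}(f(x_i)-f(x_j))^2 + \sum_{j\neq j'\in A}(f(x_i)-f(x_j))(f(x_i)-f(x_{j'}))$; averaging over $A$ with the size-only law \eqref{pxA}, the diagonal piece yields a factor $M(x_i)/N$ and the off-diagonal piece a factor $(Q(x_i)-M(x_i))/N^2\cdot N(N-1)/N \approx (Q(x_i)-M(x_i))$ times $\big(\ell_{m(\underline{x})}(f)(x_i)\big)^2$, which after multiplying by $\lambda(x_i)$ and normalizing gives precisely the $\lambda(x)(Q(x)-M(x))(\ell_\mu(f)(x))^2$ term. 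The diagonal cloning piece combines with the killing carré-du-champ to complete $\Gamma_{\overline{\LL}_{\mu,c}}(f,f)$ at order $1/N$, and every remaining term is manifestly $O(1/N^2)$ using the uniform bounds $\|f\|<\infty$, $M,Q\in\CC_b(E)$, $\lambda\in\CC_b(E)$.

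I would present this as: (i) recall the jump-generator formula for $\Gamma$; (ii) write the increments via \eqref{deco}; (iii) expand the square into mutation, cloning, cross, and killing blocks; (iv) evaluate the $A$-averages using the combinatorial identities $\sum_{A\ni j}\pi_x(A)=M(x)/N$ and $\sum_{A\supset\{j,j'\}}\pi_x(A)=\frac{Q(x)-M(x)}{N(N-1)}$; (v) collect the $1/N$ coefficient, identify it as $m(\underline{x})(G_{m(\cdot)}(f,f))$ with $G_\mu$ as in \eqref{G_cloning}, and bound the remainder by $O(1/N^2)$. The main obstacle I expect is the careful asymptotic analysis of the pure-cloning square: getting the off-diagonal combinatorial factor right (the $1/\binom{N}{n}\cdot\binom{N-2}{n-2}$ counting, and the resulting $Q-M$ rather than $Q$), correctly separating the leading $1/N$ contribution from the $1/N^2$ corrections arising from $N(N-1)$ versus $N^2$ denominators, and verifying that cross-particle terms in the double sum defining $\Gamma$ (i.e. contributions where the squared increment mixes different particles) are genuinely subleading — this is the analogue of the ``off-diagonal terms vanish'' remark made for the mean-field case after \eqref{IPScarre}, but here it is more delicate because cloning couples a whole set $A$ of particles in a single jump. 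Uniform boundedness of $K$ via \eqref{bsup} is what keeps all these sums controlled uniformly in $N$.
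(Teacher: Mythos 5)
Your proposal is correct and follows essentially the same route as the paper's proof: decompose the clone--mutate increment via \eqref{deco}, expand the square into mutation, cloning, cross and killing blocks, evaluate the $A$-averages with the identities $\sum_{A\ni j}\pi_x(A)=M(x)/N$ and $\sum_{A\supset\{j,k\}}\pi_x(A)=(Q(x)-M(x))/(N(N-1))$, and collect the order-$1/N$ coefficient. The only refinement worth noting is that the off-diagonal cloning contribution is \emph{not} subleading (it produces the $\lambda(Q-M)(\ell_\mu f)^2$ term at order $1/N$), which your main computation handles correctly even though your closing remark phrases it as something to be checked for subleadingness.
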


\begin{remark}
Due to the linearity of the generator, the combined mutation/cloning events in the cloning algorithm can be decomposed easily, which leads to extra terms only in the quadratic carr\'e du champ. In the expression of the operator $G_\mu$ \eqref{G_cloning}, the term $$\frac{1}{\l(x)} \LL f(x)\cdot \widetilde{\LL}^{t}_{\mu,c}f(x) $$
is due to the dependence between mutation and cloning dynamics and its sign is not known a priori. Whereas, the term $\lambda(x)\,\big(Q(x)-M(x)\big)\cdot\big(\ell_\mu f(x)\big)^2$ arises from the dependence between clones (since multiple cloning events are allowed at the same time) and is always non-negative. In particular, in any setting in which there is at most one clone per event, i.e. when $Q(x)=M(x)$, the term vanishes. Furthermore, minimizing $Q(x)$ as in \eqref{common_choice} for given $M(x)$ \eqref{clomean} leads to the best bound on the carr{\'e} du champ and convergence properties of the algorithm.
\end{remark}

\begin{proof}
Consider the carr\'e du champ of $\overline{L}_c^N$,
\begin{align*}
    \Gamma_{\overline{L}_c^N}&\big(F,\,F \big)(\underline{x})\nonumber\\
    &\,=\,\sum_{i=1}^N \,\bigg(\lambda(x_i)\int_E {p}(x_i,\,dy) \sum_{A\in \NN}\,\pi_{x_i}(A)\cdot \,\big( F(\underline{x}^{A,x_i;\,i,y})-F(\underline{x}) \big)^2\nonumber\\
    &\qquad\qquad +\frac{\big(\VV(x_i)-c\big)^-}{N}\sum_{j=1}^N   \big( F(\underline{x}^{i,x_j})-F(\underline{x}) \big)^2\bigg)\ .
\end{align*}
Using \eqref{deco}, the first term can be decomposed as
\begin{align*}
    \int_{ E}&{p}(x_i,dy)\sum_{A\in\NN} \pi_{x_i}(A) \big( F(\underline{x}^{A,x_i;i,y})-F(\underline{x})\big)^2\nonumber\\
    &= \int_{E} p(x_i,dy)\big(F(\underline{x}^{i,y})-F(\underline{x})\big)^2+\sum_{A\in \NN} \pi_{x_i}(A)\big(F(\underline{x}^{A,x_i})-F(\underline{x})\big)^2\nonumber\\
    &+2\int_{ E} {p}(x_i,dy)\ \big(F(\underline{x}^{i,y})-F(\underline{x})\big)\sum_{A\in\NN} \pi_{x_i}(A) \big(F(\underline{x}^{A,x_i})-F(\underline{x})\big)\ ,
\end{align*}
where with \eqref{pim1} and \eqref{pim2} the last line can be rewritten as
\begin{align*}
    \frac{2}{N^2}& \int_E p(x_i,dy)\ \big( f(y)-f(x_i)\big) \frac{\big(\VV (x_i)-c\big)^+}{\lambda (x_i)}
    \sum_{j=1}^N\big(f(x_i)-f(x_j)\big)\\
    &=-\frac{2}{N^2}\cdot \frac{1}{\l(x_i)^2} \LL f(x_i)\cdot \widetilde{\LL}^{t}_{m(\underline{x}),c}f(x_i)\ .
\end{align*}
Substituting in the expression of the carr\'{e} du champ $\Gamma_{\overline{L}_c^N} $, we obtain
\begin{align}
    \Gamma_{\overline{L}_c^N}\big(F,\,F \big)(\underline{x})\,=\,& \sum_{i=1}^N\lambda(x_i)\int_{E} p(x_i,dy)\big(F(\underline{x}^{i,y})-F(\underline{x})\big)^2\nonumber\\
    &+\,\sum_{i=1}^N\lambda(x_i)\sum_{A\in \NN} \pi_{x_i}(A)\big(F(\underline{x}^{A,x_i})-F(\underline{x})\big)^2\nonumber \\
    &+ \sum_{i=1}^N  \frac{\big(\VV(x_i)-c\big)^-}{N}\sum_{j=1}^N   \big( F(\underline{x}^{i,x_j})-F(\underline{x}) \big)^2
   \nonumber\\
    &-\,\frac{2}{N^2}\sum_{i=1}^N \frac{1}{\l(x_i)} \LL f(x_i)\cdot \widetilde{\LL}^{t}_{m(\underline{x}),c}f(x_i)\ .\label{carre_expression}
\end{align}
The first line in \eqref{carre_expression} is simply
\begin{equation*}
     \sum_{i=1}^N\l(x_i)\,\int_{E} p(x_i,dy)\big(F(\underline{x}^{i,y})-F(\underline{x})\big)^2\,=\,  \frac{1}{N^2}\,\sum_{i=1}^N\Gamma_\LL (f,f) (x_i)\ .
\end{equation*}
Now, considering the second line of \eqref{carre_expression}, we can write
\begin{align*}
    \l(x_i)&\sum_{A\in \NN} \pi_{x_i}(A)\big(F(\underline{x}^{A,x_i})-F(\underline{x})\big)^2\\
    &=\frac{\l(x_i)}{N^2} \sum_{A\in \NN} \pi_{x_i}(A)\sum_{j,k\in A} \big(f(x_i)-f(x_j)\big)\cdot \big(f(x_i)-f(x_k)\big)\\
    &=\frac{\l(x_i)}{N^2}\bigg( \sum_{j=1}^N \frac{M(x_i)}{N}\,\big(f(x_i)-f(x_j)\big)^2\\
    &\qquad\qquad +\sum_{\substack{j,k=1 \\ k\neq j}}^N \frac{Q(x_i)-M(x_i)}{N(N-1)}\,\big(f(x_i)-f(x_j)\big)\cdot\big(f(x_i)-f(x_k)\big)\bigg)\ ,
\end{align*}
since
\begin{equation*}
    \sum_{A|k,j\in A} \pi_{x_i}(A)\,=\, \sum_{n=2}^N \frac{\pi_{x_i,n}}{\binom{N}{n}}\cdot\binom{N-2}{n-2}\,=\, \frac{Q(x_i)-M(x_i)}{N(N-1)}\ ,
\end{equation*}
for every $j,k\in\{1,\dots,N\}$ such that $j\neq k$.

Recalling that $\lambda(x) M(x)=\big(\VV(x)-c\big)^+$, exchanging summation indices and combining with the third line of \eqref{carre_expression}, we see that
\begin{align*}
    \sum_{i=1}^N&\,\frac{\l(x_i)}{N^2} \sum_{j=1}^N \frac{M(x_i)}{N}\,\big(f(x_i)-f(x_j)\big)^2\,+\\
    &+\, \sum_{i=1}^N  \frac{\big(\VV(x_i)-c\big)^-}{N}\sum_{j=1}^N   \big( F(\underline{x}^{i,x_j})-F(\underline{x}) \big)^2\,=\,\frac{1}{N^2}\sum_{i=1}^N\, \Gamma_{\widetilde{\LL}_{m(\underline{x}),c}}(f,f)(x_i)\ .
\end{align*}

Moreover, 
\begin{align*}
    \sum_{i=1}^N\frac{\l(x_i)}{N^2}&\sum_{\substack{j,k=1 \\ k\neq j}}^N \frac{Q(x_i)-M(x_i)}{N(N-1)}\,\big(f(x_i)-f(x_j)\big)\big(f(x_i)-f(x_k)\big)\\
    &=\,\sum_{i=1}^N\frac{\lambda(x_i)}{N^2} \, \big(Q(x_i)-M(x_i)\big)\, \big(\ell_{m(\underline{x})}f(x_i)\big)^2\,  +\,\Ocarre{N}{\underline{x}}{f}\ .
\end{align*}
with
$$
\Ocarre{N}{\underline{x}}{f} \,=\, \sum_{i,j=1}^N \frac{\l(x_i)\big(Q(x_i)-M(x_i)\big)}{N^3(N-1)}\,\big(f(x_i)-f(x_j)\big)^2\, \leq\, C\,\frac{\|f\|^2}{N^2}\ ,
$$
for all $\underline{x}\in E$, for some constant $C>0$, since $M(x)$ and $Q(x)$ are bounded by condition \eqref{bsup} and $\l(x)$ is bounded by assumption. Combining all together, we obtain the statement.

\end{proof}

Proposition \ref{prop_L} and Proposition \ref{prop_carre} show in particular that Assumption \ref{ass_IPS} is satisfied for cloning algorithms, hence Theorem \ref{thm_weak} holds and provides bias and $L^p$ error bounds.

\section{Proof of Theorem \ref{thm_weak}}\label{section_proofs}

This section is devoted to the proof of Theorem \ref{thm_weak}, which is an adaptation of the results presented by M. Rousset in \cite{rousset2006control}. Throughout this section we consider a generic sequence of IPS generators $(\overline{L}^N)_{N\in\N}$ satisfying Assumption \ref{ass_IPS} for some McKean generator $\overline{\LL}_\mu$ \eqref{mcKean_generator}. Furthermore, we assume that the normalized Feynman-Kac measure $\mu_t$ is asymptotically stable, i.e. Assumption \ref{ass_expstab_generic} holds.

The proof makes use of the propagator $\Theta_{t,T}$ of $\mu_t$ defined in \eqref{normalized_prop}, 
and the martingale characterization of $\overline{L}^N$.
We denote by $\CC_b^{0,1}(E\times \R^+)$ the set of bounded functions $\varphi_\cdot$ such that $\varphi_t(\cdot)$ is continuous on $E$ for every $t\in\R^+$ and $\varphi_\cdot (x)$ has continuous time derivative for every $x\in E$. Following the standard martingale characterization of Feller-type Markov processes, using It{\^o}'s formula and \eqref{ass_generator} one can show that (see also \cite{rousset2006control}, Proposition 3.3), for every $\varphi_\cdot\in\CC_b^{0,1}(E\times\R^+)$, the process
\begin{equation}\label{martingale}
    \MM^N_t(\varphi_\cdot)\,=\, \mu_t^N(\varphi_t)-\mu_0^N(\varphi_0)-\int_0^t \mu_s^N\big(\partial_s \varphi_s + \overline{\LL}_{\mu_s^N}(\varphi_s)\big)\,ds
\end{equation}
is a local martingale. With \eqref{ass_carre} its predictable quadratic variation is bounded by
\begin{equation}\label{predictable_quadratic_variation}
    \big\langle \MM^N(\varphi_\cdot)\big\rangle_t\,\leq\, \frac{1}{N}\int_0^t \mu_s^N\big(G_{\mu^N_s}(\varphi_s,\,\varphi_s)\big)\,ds\,+\, C\,t\cdot\sup_{s\in[0,t]}\,\frac{\|\varphi_s\|^2}{N^2}\ ,
\end{equation}
for some constant $C\geq 0$ independent of $\varphi$ and $N$, and with \eqref{bounded_jumps} jumps are bounded by
\begin{equation}\label{martingale_jumps}
    \big| \Delta \MM^N_t(\varphi_\cdot) \big|\,\leq\,\frac{2K\,\|\varphi_t\|}{N} \ .
\end{equation}

The following technical Lemma for martingales will play a central role in the proof of Theorem \ref{thm_weak}.

\begin{lemma}\label{lemma6.2}
Let $\MM$ be a locally square-integrable martingale with continuous predictable quadratic variation $\langle\MM\rangle$, $\MM_0=0$ and uniformly bounded jumps $\sup_t |\Delta\MM_t|\leq a<\infty$. Then, for every $q\in \N_0$ and $T\geq 0$, there exists a constant $C_q>0$ such that
\begin{equation*}
    \sup_{t\leq T}\E\big[ \MM_t^{2^{q+1}} \big]\,\leq\, C_q\,\sum_{k=0}^q \,a^{2^{q+1}-2^{k+1}} \E\Big[ (\langle\MM\rangle_T)^{2^k} \Big]\ .
\end{equation*}
\end{lemma}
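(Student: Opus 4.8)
The plan is to prove the moment bound by induction on $q$, combining the Burkholder--Davis--Gundy (BDG) inequality with the jump bound on $\MM$. The base case $q=0$ asks for $\sup_{t\le T}\E[\MM_t^2]$, which is exactly $\E[\langle\MM\rangle_T]$ (up to the constant $C_0$) by the fact that $\MM_t^2-\langle\MM\rangle_t$ is a martingale started at $0$, together with an optional stopping / monotone convergence argument using the uniformly bounded jumps to control the local-martingale localisation. So the real content is the inductive step.

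For the inductive step, fix $p=2^{q+1}$ and apply BDG to the martingale $\MM$ at the (exponent-$p$ moment) level: there is a universal constant $B_p$ with
\begin{equation*}
    \E\Big[\sup_{t\le T}|\MM_t|^{p}\Big]\,\le\, B_p\,\E\big[[\MM]_T^{p/2}\big]\ ,
\end{equation*}
where $[\MM]$ denotes the optional quadratic variation. The next step is to pass from $[\MM]$ to $\langle\MM\rangle$. Write $[\MM]_T=\langle\MM\rangle_T+\big([\MM]_T-\langle\MM\rangle_T\big)$; since $\langle\MM\rangle$ is continuous, the jumps of $[\MM]$ are $(\Delta\MM_t)^2\le a^2$, and $[\MM]-\langle\MM\rangle$ is itself a martingale (started at $0$) with jumps bounded by $a^2$. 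Applying the already-established bound at level $q-1$ (i.e. exponent $2^q=p/2$) to this auxiliary martingale $[\MM]-\langle\MM\rangle$, one controls $\E\big[([\MM]_T-\langle\MM\rangle_T)^{p/2}\big]$ in terms of $\sum_{k=0}^{q-1} a^{2^{q}-2^{k+1}}\E[\langle [\MM]-\langle\MM\rangle\rangle_T^{2^k}]$; and the predictable quadratic variation of $[\MM]-\langle\MM\rangle$ can in turn be bounded by $a^2\langle\MM\rangle_T$ (each jump contributes at most $a^2$ times a jump of $\langle\MM\rangle$-compensator, so its angle bracket is dominated by $a^2\langle\MM\rangle_T$ up to a constant). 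Feeding these estimates back, and using the elementary inequality $(u+v)^{p/2}\le 2^{p/2-1}(u^{p/2}+v^{p/2})$, collapses everything into a sum of the form $\sum_{k=0}^{q} a^{p-2^{k+1}}\E[\langle\MM\rangle_T^{2^k}]$ with a new constant $C_q$ depending only on $q$ (through $B_p$, $C_{q-1}$ and the combinatorial factors).

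The main obstacle I anticipate is handling the relationship between the optional quadratic variation $[\MM]$ and the predictable one $\langle\MM\rangle$ cleanly, since the hypothesis is stated only in terms of $\langle\MM\rangle$ while BDG naturally produces $[\MM]$: one must argue that $[\MM]-\langle\MM\rangle$ is a locally square-integrable martingale, identify its predictable bracket (its jumps are $(\Delta\MM_t)^2$, bounded by $a^2$, so $\langle [\MM]-\langle\MM\rangle\rangle_T \le a^2 \langle\MM\rangle_T$ after compensation), and then invoke the induction hypothesis at the lower level $q-1$ on it. A secondary technical point is the localisation: $\MM$ is only \emph{locally} square-integrable, so one should introduce a localising sequence of stopping times $T_n\uparrow\infty$, prove the bound for $\MM^{T_n}$ (whose jumps are still bounded by $a$ and whose brackets are $\langle\MM\rangle_{\cdot\wedge T_n}\le\langle\MM\rangle_T$), and then let $n\to\infty$ via Fatou, using that the right-hand side does not depend on $n$. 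Once these two points are dispatched, the induction runs without further difficulty.
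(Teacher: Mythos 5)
The paper does not prove this lemma itself but simply cites Rousset's Lemma 6.2, and your argument — induction on $q$ via the BDG inequality, splitting $[\MM]_T$ into $\langle\MM\rangle_T$ plus the compensated jump martingale $N=[\MM]-\langle\MM\rangle$ whose jumps are bounded by $a^2$, and applying the inductive hypothesis to $N$ with $\langle N\rangle\le a^2\langle\MM\rangle_T$ — is exactly that cited proof, with the exponent bookkeeping $(a^2)^{2^q-2^{k+1}}(a^2)^{2^k}=a^{2^{q+1}-2^{k+1}}$ coming out correctly. The two technical points you flag (localisation via Fatou, and the passage from $[\MM]$ to $\langle\MM\rangle$, including the continuity of $\langle N\rangle$ which follows from its domination by the continuous process $a^2\langle\MM\rangle$) are handled correctly, so the proposal is complete.
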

\begin{proof}
See \cite{rousset2006control}, Lemma 6.2.

\end{proof}

\subsection{Properties of the normalized propagator}

\begin{lemma}\label{diffTheta2n}
For any test function $f\in\CC_b(E)$ and $0\leq t\leq T$, we have for the normalized propagator \eqref{normalized_prop} 
\begin{equation*}
    \partial_t \,\big(\Theta_{t,T}f(x)\big)\,=\, -  \big(\LL\,+\, \VV(x)-\mu_t(\VV) \big)\big(\Theta_{t,T}f(x)\big).
\end{equation*}
\end{lemma}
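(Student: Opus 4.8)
The plan is to differentiate the defining ratio \eqref{normalized_prop} directly. Write $\Theta_{t,T}f(x) = g_t(x)/Z_t$, where $g_t := P^\VV(T-t)f \in \CC_b(E)$ is the numerator and $Z_t := \mu_t\bigl(P^\VV(T-t)1\bigr)$ is the normalising scalar, which is strictly positive (by Assumption \ref{ass_expstab_generic}, $e^{-(T-t)\lambda_0}P^\VV(T-t)1 \to \mu_\infty(1) = 1$), so that the ratio and its $t$-derivative are well defined.

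First I would compute $\partial_t g_t$: by the backward equation \eqref{nusemi} for $P^\VV$ and the chain rule applied to $s\mapsto P^\VV(s)f$ at $s=T-t$, one gets $\partial_t g_t = -\LL^\VV g_t = -(\LL+\VV)g_t$; taking $f\equiv 1$ gives in particular $\partial_t\bigl(P^\VV(T-t)1\bigr) = -(\LL+\VV)\bigl(P^\VV(T-t)1\bigr)$. Next I would compute $\partial_t Z_t$. Since both $\mu_t$ and the integrand $h_t := P^\VV(T-t)1$ vary with $t$, I split the difference quotient as $\mu_{t+\e}(h_{t+\e}) - \mu_t(h_t) = \mu_{t+\e}(h_{t+\e}-h_t) + \bigl(\mu_{t+\e}(h_t)-\mu_t(h_t)\bigr)$; the first term contributes $\mu_t(\partial_t h_t)$ and the second $\tfrac{d}{dt}\mu_t(h)\big|_{h=h_t}$, which by Lemma \ref{dmu} equals $\mu_t(\LL h_t)+\mu_t(\VV h_t)-\mu_t(h_t)\mu_t(\VV)$ (legitimate since $h_t\in\CC_b(E)$). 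Inserting $\partial_t h_t = -(\LL+\VV)h_t$, the first two terms cancel and I obtain $\partial_t Z_t = -\mu_t(\VV)\,Z_t$.

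Finally, the quotient rule together with these two computations gives
\begin{align*}
\partial_t\bigl(\Theta_{t,T}f(x)\bigr) &= \frac{\partial_t g_t(x)}{Z_t} - \frac{g_t(x)\,\partial_t Z_t}{Z_t^2}\\
&= -\frac{\bigl(\LL+\VV(x)\bigr)g_t(x)}{Z_t} + \mu_t(\VV)\,\frac{g_t(x)}{Z_t}.
\end{align*}
Because $Z_t$ is independent of $x$ and $\LL$ is linear, $(\LL g_t)(x)/Z_t = \LL\bigl(g_t/Z_t\bigr)(x) = \LL(\Theta_{t,T}f)(x)$, so the right-hand side equals $-\bigl(\LL + \VV(x) - \mu_t(\VV)\bigr)(\Theta_{t,T}f)(x)$, which is exactly the claim.

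The only genuinely delicate point is the term-by-term differentiation of $t\mapsto\mu_t(h_t)$ when both arguments depend on $t$; this is handled by the splitting above, using that $\phi_\e := (h_{t+\e}-h_t)/\e \to \partial_t h_t$ in $\|\cdot\|$ (a consequence of Lemma \ref{tilted}, the trajectory $s\mapsto P^\VV(s)1$ being continuously differentiable in supremum norm), that $\mu_{t+\e}$ is a probability measure so $|\mu_{t+\e}(\phi_\e)-\mu_{t+\e}(\partial_t h_t)|\le\|\phi_\e-\partial_t h_t\|\to 0$, and the weak continuity of $t\mapsto\mu_t$ coming from \eqref{lemma_expstab}.
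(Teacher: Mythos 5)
Your proof is correct and is exactly the computation the paper's one-line proof sketches (substitute the backward equation \eqref{nusemi} into the time derivative of the ratio \eqref{normalized_prop}) before deferring to Rousset; the key identity $\partial_t Z_t=-\mu_t(\VV)\,Z_t$ is obtained correctly, and could alternatively be read off from $Z_t=\nu_T(1)/\nu_t(1)$ together with \eqref{nu_eq}. One small quibble: the continuity of $t\mapsto\mu_t(g)$ for fixed $g\in\CC_b(E)$ that you invoke at the end follows from the differentiability in Lemma \ref{dmu}, not from \eqref{lemma_expstab}, which concerns only the $t\to\infty$ behaviour.
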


\begin{proof}
See \cite{rousset2006control}, p. 836. The idea of the proof is to substitute \eqref{nusemi} into the time derivative of $\Theta_{t,T}f$ \eqref{normalized_prop}.

\end{proof}

\begin{lemma}\label{Theta_estimation}
Under Assumption \ref{ass_expstab_generic} on asymptotic stability, for any $0\leq t\leq T$ and $n\in\N$ and $f\in\CC_b(E)$, there exists a constant $\b>0$ such that
\begin{equation*}
    \|\Theta_{t,T}f\|\leq \b\cdot\|f\|\qquad\mathrm{and}\qquad
    \int_t^T \|\Theta_{s,T}f\|^{2^n}ds \leq \b^{2^n}\cdot \|f\|^{2^n}\cdot(T-t).
\end{equation*}
Moreover, for any $\overline{f}:=f-\mu_T(f)$, there exists some $0<\rho<1$, such that
\begin{equation*}
    \|\Theta_{t,T}\overline{f}\|\leq \b\cdot\|\overline{f}\| \cdot\rho^{T-t}\qquad\mathrm{and}\qquad
    \int_t^T \|\Theta_{s,T}\overline{f}\|^{2^n}ds \leq \b^{2^n}\cdot\|\overline{f}\|^{2^n}\,.
\end{equation*}
\end{lemma}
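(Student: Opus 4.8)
The plan is to derive all four bounds from Assumption \ref{ass_expstab_generic} together with the exponential decay estimate \eqref{lemma_expstab} on $\mu_{t}$, by unwinding the definition \eqref{normalized_prop} of the propagator. First I would write, for $0\le t\le T$,
\begin{equation*}
    \Theta_{t,T}f(x)\,=\,\frac{P^\VV(T-t)f(x)}{\mu_t\big(P^\VV(T-t)1\big)}\,=\,\frac{e^{-(T-t)\lambda_0}P^\VV(T-t)f(x)}{\mu_t\big(e^{-(T-t)\lambda_0}P^\VV(T-t)1\big)}\ ,
\end{equation*}
so that the $\lambda_0$-factors cancel and the numerator can be controlled directly via \eqref{asssta}: $\big|e^{-(T-t)\lambda_0}P^\VV(T-t)f(x)\big|\le |\mu_\infty(f)|+\|f\|\,\alpha\rho^{T-t}\le \|f\|(1+\alpha)$. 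For the denominator, applying \eqref{asssta} with $f\equiv 1$ gives $\mu_t\big(e^{-(T-t)\lambda_0}P^\VV(T-t)1\big)\ge \mu_\infty(1)-\alpha\rho^{T-t}\ge 1-\alpha\rho^{T-t}$, which is bounded below by a positive constant once $T-t$ exceeds some threshold; for smaller $T-t$ one uses instead that $P^\VV(s)1$ is bounded below by a strictly positive constant on the compact time interval $[0,s_0]$ (this follows since $P^\VV(s)1>0$ depends continuously on $s$ and $P^\VV(0)1=1$). Combining these gives $\|\Theta_{t,T}f\|\le \b\|f\|$ for a suitable $\b>0$, and the integral bound $\int_t^T\|\Theta_{s,T}f\|^{2^n}ds\le \b^{2^n}\|f\|^{2^n}(T-t)$ is then immediate.

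For the second pair of bounds, the key observation is that $\Theta_{t,T}$ acts on the centered function $\overline f=f-\mu_T(f)$ with an extra decay factor. Using $\mu_T(f)=\mu_t(\Theta_{t,T}f)$ from \eqref{normalized_prop}, I would write
\begin{equation*}
    \Theta_{t,T}\overline f(x)\,=\,\Theta_{t,T}f(x)-\mu_T(f)\,=\,\frac{e^{-(T-t)\lambda_0}\big(P^\VV(T-t)f(x)-\mu_T(f)\,P^\VV(T-t)1(x)\big)}{\mu_t\big(e^{-(T-t)\lambda_0}P^\VV(T-t)1\big)}\ .
\end{equation*}
In the numerator, add and subtract $\mu_\infty(f)$: by \eqref{asssta}, $e^{-(T-t)\lambda_0}P^\VV(T-t)f(x)=\mu_\infty(f)+O(\|f\|\rho^{T-t})$ and $e^{-(T-t)\lambda_0}P^\VV(T-t)1(x)=1+O(\rho^{T-t})$, while $\mu_T(f)=\mu_\infty(f)+O(\|f\|\rho^{T})$ by \eqref{lemma_expstab}; hence the numerator is $\mu_\infty(f)-\mu_\infty(f)+O(\|f\|\rho^{T-t})=O(\|\overline f\|\,\rho^{T-t})$, after noting $\|f\|$ and the various error constants can be absorbed into a bound in terms of $\|\overline f\|$ (one has to be slightly careful: $\|f\|$ is not bounded by $\|\overline f\|$ in general, but the differences appearing here all come with $\mu_\infty$-averages, so replacing $f$ by $\overline f$ throughout — which is legitimate since $\Theta$ annihilates the constant shift up to the propagator of $1$ — keeps everything in terms of $\|\overline f\|$). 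Dividing by the denominator, which is again bounded below by a positive constant, yields $\|\Theta_{t,T}\overline f\|\le \b\|\overline f\|\rho^{T-t}$; the integral bound follows since $\int_t^T\rho^{2^n(T-s)}ds\le 1/(2^n\log(1/\rho))$ is bounded uniformly in $t,T$, and the resulting constant can be folded into $\b^{2^n}$.

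The main obstacle, and the step requiring the most care, is the lower bound on the denominator $\mu_t\big(e^{-(T-t)\lambda_0}P^\VV(T-t)1\big)$ uniformly in $0\le t\le T$: for large $T-t$ this is handled by \eqref{asssta} with $f\equiv1$, but for small $T-t$ one needs a separate argument showing $P^\VV(s)1\ge c_0>0$ uniformly for $s$ in a bounded interval, which uses the continuity of $s\mapsto P^\VV(s)1$ and positivity of the Feynman-Kac semigroup (from Lemma \ref{tilted}). The other delicate point is bookkeeping the constants so that the centered bounds genuinely come out in terms of $\|\overline f\|$ rather than $\|f\|$; this is where one exploits that $\mu_\infty$ is a probability measure and that the leading-order terms cancel, leaving only $\rho^{T-t}$-small remainders that can be bounded by $\|\overline f\|$ times a constant. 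Both points are routine but must be stated; everything else is a direct substitution as indicated in \cite{rousset2006control}, p.~836.
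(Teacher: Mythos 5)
The paper itself only cites Rousset's Lemma 5.1 at this point, so a self-contained argument is welcome, and your overall strategy is the right one: cancel the $e^{-(T-t)\lambda_0}$ factors, bound the numerator by \eqref{asssta} and the denominator from below. Two steps need repair, though. First, the intermediate identity $\Theta_{t,T}\overline f(x)=\Theta_{t,T}f(x)-\mu_T(f)$ is false in general, because $\Theta_{t,T}1\not\equiv 1$; only the right-hand member of your chain of equalities is correct. The cleanest fix also removes the bookkeeping worry you flag about $\|f\|$ versus $\|\overline f\|$: since $\mu_T(\overline f)=0$, simply run the first-part computation on $\overline f$ itself, using \eqref{asssta} applied to $\overline f$ together with
\begin{equation*}
|\mu_\infty(\overline f)|\,=\,|\mu_\infty(\overline f)-\mu_T(\overline f)|\,\leq\,\|\overline f\|\,\tilde{\a}\,\rho^{T}\,\leq\,\|\overline f\|\,\tilde{\a}\,\rho^{T-t}
\end{equation*}
from \eqref{lemma_expstab}; the numerator is then bounded by $\|\overline f\|(\a+\tilde{\a})\rho^{T-t}$ with every constant genuinely expressed through $\|\overline f\|$. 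Second, your lower bound on the denominator for small $T-t$ via ``continuity plus pointwise positivity'' does not suffice when $E$ is non-compact: a strictly positive continuous function need not be bounded away from zero. Use instead the Feynman--Kac representation (or a Gronwall bound on \eqref{nusemi}) giving $P^\VV(s)1\geq e^{-s\|\VV\|}$, hence $\mu_t\big(e^{-s\lambda_0}P^\VV(s)1\big)\geq e^{-s(\lambda_0+\|\VV\|)}$, which is uniformly positive on the compact range of $s=T-t$ not already covered by \eqref{asssta}. With these two repairs the argument is complete; the integral bounds follow as you indicate, with the factor $1/(2^n\log(1/\rho))$ absorbed into $\b^{2^n}$.
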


\begin{proof}
The proof can be found in \cite[Lemma 5.1]{rousset2006control} and the result is due to the asymptotic stability of the Feynman-Kac model.

\end{proof}

Observe that, applying Lemma \ref{diffTheta2n} to the martingale characterization \eqref{martingale} of $\overline{L}^N$, we obtain
\begin{align}\nonumber
    \MM^N_T\big( \Theta_{\cdot,T}{f}\big) &= \mu_T^N (f)
    -\mu_0^N\big( \Theta_{0 ,T}f\big)\,-\,\int_0^T \mu_s^N\Big(\big( \widetilde{\LL}_{\mu^N_s}-\VV+\mu_s(\VV)\big)\big(\Theta_{s,T}f\big)\Big)\,ds\\
    \label{M_Theta}
    & =\mu_T^N (f)
    -\mu_0^N\big( \Theta_{0 ,T}f\big)\,-\,\int_0^T \mu_s^N\big(\Theta_{s,T}{f}\big)\cdot\big(\mu_s(\VV)-\mu_s^N(\VV)\big)\, ds\ ,
\end{align}
for any $f\in\CC_b(E)$, where the last equality follows by the characterization \eqref{general_mcKean} of McKean models. 
By \eqref{M_Theta}, we obtain the stochastic differential equation
\begin{equation}\label{diff_eq_with_M}
    d\mu_t^N(\Theta_{t,T}f)=d\MM^N_t(\Theta_{\cdot,T}f)+\big(\mu_t(\VV)-\mu_t^N(\VV)\big)\cdot\mu_t^N(\Theta_{t,T}f)\, dt\ .
\end{equation}
Moreover, applying Lemma \ref{Theta_estimation} to the predictable quadratic variation \eqref{predictable_quadratic_variation}, we obtain that almost surely,
\begin{equation}\label{carre_estimation}
    \big\langle \MM^N(\Theta_{\cdot,T}f)\big\rangle_t\,\leq\, \frac{1}{N}\,\|\overline{G}\|\cdot \beta^2\,\|f\|^2\,(T-t) \,+\, C\,(T-t)\,\frac{\beta^2\cdot \|f\|^2}{N^2}\ ,
\end{equation}
where $ \overline{G}(f,f)=\sup_{\mu\in \PP(E)}G_\mu(f,f)$\ .

Note that Equation \eqref{M_Theta} for centered test functions $\overline{f}=f-\mu_T(f)$ can be rewritten as
\begin{equation}\label{M_Theta_centered}
    \mu^N_T(f)-\mu_T(f)\,=\, \mu^N_0(\Theta_{0,T}\overline{f})\,+\,\MM^N_T(\Theta_{\cdot,T}\overline{f})\,+\,\int_0^T \mu_s^N\big(\Theta_{s,T}\overline{f}\big)\cdot\big(\mu_s(\VV)-\mu_s^N(\VV)\big)\, ds\ .
\end{equation}
The martingale characterization \eqref{M_Theta}-\eqref{M_Theta_centered} will be the key element in the proof of Theorem \ref{thm_weak}.

\subsection{$L^p$ and bias estimates}

Define
\begin{equation}\label{Phi}
    \Phi_{t,T}(\mu):=\frac{\mu P^{\VV}(T-t)}{\mu\big(P^{\VV}(T-t)1\big)}\in\PP(E),
\end{equation}
with $\mu\in\PP(E)$ and $0\leq t\leq T$. Observe that the measure $ \Phi_{t,T}(\mu)$ can be also rewritten in terms of $\Theta_{t,T}$ \eqref{normalized_prop} as
\begin{equation}\label{phi_theta}
    \Phi_{t,T}(\mu)(f)\,=\,\frac{\mu(\Theta_{t,T}f)}{\mu(\Theta_{t,T}1)} \ ,
\end{equation}
for any $f\in\CC_b(E)$. 
To prove Theorem \ref{thm_weak}, we consider the decomposition
\begin{align}
    \E[|\mu_T^N(f)-\mu_T(f)|^p]^{1/p}\nonumber
    \,\leq\; & \E[|\mu^N_T(f)-\Phi_{t,T}(\mu^N_t)(f) |^p]^{1/p}\\&\; +\E[|\Phi_{t,T}(\mu^N_t)(f)-\mu_T(f)|^p]^{1/p},\label{decomposition_atbt}
\end{align}
for any $0\leq t\leq T$. The proof is structured as follows:
\begin{itemize}
    \item In Lemma \ref{lemma_at}, we bound the first term of the decomposition under Assumptions \ref{ass_expstab_generic} and \ref{ass_IPS};
    \item In Lemma \ref{lemma_bt}, we bound the second term under Assumption \ref{ass_expstab_generic};
    \item In Lemma \ref{global_control}, we combine Lemma \ref{lemma_at} and Lemma \ref{lemma_bt} to obtain $L^p$-error estimates of order $1/N^{\delta/2}$, for some $\delta\in (0,1)$;
    \item Finally, from Lemma \ref{global_control} we derive, by iteration, $L^p$ estimates of order $1/\sqrt{N}$, as presented in Theorem \ref{thm_weak}.
\end{itemize}

\begin{lemma}\label{lemma_at}
Consider a sequence of particle approximations satisfying Assumption \ref{ass_IPS} with empirical distributions $\mu_t^N$ \eqref{emea}. Under Assumption \ref{ass_expstab_generic} on asymptotic stability, for any $p\geq 2$ there exists a constant $c_p> 0$ such that
\begin{equation*}
    \E\Big[ \big|\mu_T^N(f)-\Phi_{t,T}(\mu^N_t)(f)\big|^p\Big]\,\leq\, c_p\, e^{4p(T-t)\|\VV\|} \,\bigg(\frac{\|f\|^p\,(T-t)^{p/2}}{N^{p/2}}\bigg)\ ,
\end{equation*}
for any $f\in\CC_b(E)$ and $0\leq t\leq T$.

\end{lemma}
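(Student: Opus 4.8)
The starting point is the stochastic differential equation \eqref{diff_eq_with_M} for $\mu_t^N(\Theta_{t,T}f)$, together with the identity $\Phi_{t,T}(\mu^N_t)(f)=\mu^N_t(\Theta_{t,T}f)/\mu^N_t(\Theta_{t,T}1)$ from \eqref{phi_theta}. First I would use the martingale representation \eqref{M_Theta} to write both $\mu^N_T(f)$ and $\mu^N_t(\Theta_{t,T}f)$ in terms of the martingale $\MM^N$, or more directly integrate \eqref{diff_eq_with_M} from $t$ to $T$ to get
\begin{equation*}
    \mu^N_T(\Theta_{T,T}f)-\mu^N_t(\Theta_{t,T}f)\,=\,\MM^N_T(\Theta_{\cdot,T}f)-\MM^N_t(\Theta_{\cdot,T}f)\,+\,\int_t^T \big(\mu_s(\VV)-\mu_s^N(\VV)\big)\,\mu_s^N(\Theta_{s,T}f)\,ds\ .
\end{equation*}
Since $\Theta_{T,T}f=f$, the left side is $\mu^N_T(f)-\mu^N_t(\Theta_{t,T}f)$. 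The difference $\mu^N_T(f)-\Phi_{t,T}(\mu^N_t)(f)$ is then obtained by also dividing the second term by $\mu^N_t(\Theta_{t,T}1)$ and subtracting; after a short manipulation one expresses $\mu^N_T(f)-\Phi_{t,T}(\mu^N_t)(f)$ as a ratio whose numerator is a martingale increment plus a time integral, and whose denominator $\mu^N_t(\Theta_{t,T}1)$ is bounded below by $e^{-(T-t)\|\VV\|}$ (from \eqref{normalized_prop} and $\mu_t^N$ being a probability measure), which produces the exponential prefactor $e^{4p(T-t)\|\VV\|}$ in the bound.

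The core estimate is then an $L^p$ bound on the martingale part. Here I would invoke Lemma \ref{lemma6.2} with $a = 2K\|\Theta_\cdot f\|/N \leq 2K\b\|f\|/N$ from \eqref{martingale_jumps} and the quadratic-variation bound \eqref{carre_estimation}, namely $\langle\MM^N(\Theta_{\cdot,T}f)\rangle_T \leq \frac{1}{N}\|\overline G\|\b^2\|f\|^2(T-t)+O(1/N^2)$. For $p=2^{q+1}$, Lemma \ref{lemma6.2} gives $\E[\sup_{s\leq T}\MM^N_s{}^{2^{q+1}}]\leq C_q\sum_{k=0}^q a^{2^{q+1}-2^{k+1}}\E[\langle\MM^N\rangle_T^{2^k}]$, and since $a$ scales like $1/N$ and $\langle\MM^N\rangle_T$ like $(T-t)/N$, every term in the sum is $O\big(N^{-2^{q+1}/2}(T-t)^{2^k}\big)$ times constants; the dominant term is $k=q$, giving $N^{-p/2}(T-t)^{p/2}$, and the remaining terms are lower order in $1/N$ hence absorbed into the $O(1/N^p)$. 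For general $p\geq 2$ one passes from the nearest power of two by Jensen/Lyapunov. The contribution of the time-integral term is handled by bounding $|\mu_s(\VV)-\mu_s^N(\VV)|\leq 2\|\VV\|$ crudely and $\|\Theta_{s,T}f\|\leq\b\|f\|$ by Lemma \ref{Theta_estimation}, so it is at worst $O(1)$ times $(T-t)\|f\|$; combined with the exponential prefactor this is subsumed — but in fact one wants the sharper route, keeping the martingale as the leading term and treating the drift via Grönwall/Minkowski so that the final bound stays $O(N^{-p/2})$ rather than $O(1)$. This is the delicate bookkeeping step: one applies Minkowski's inequality to the three pieces, uses the bound on the martingale just derived, and closes the resulting integral inequality in $\E[|\mu^N_T(f)-\Phi_{t,T}(\mu^N_t)(f)|^p]^{1/p}$ by Grönwall, which is exactly what produces the clean exponential factor $e^{4p(T-t)\|\VV\|}$ in front.

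The main obstacle I anticipate is not any single estimate but the careful tracking of how the $(T-t)$ dependence and the powers of $1/N$ interact across the $q+1$ terms coming out of Lemma \ref{lemma6.2}, while simultaneously controlling the self-referential drift term so that the Grönwall step yields a prefactor that is exponential in $(T-t)$ with a controlled rate (here $4p\|\VV\|$) and does not degrade the $N^{-p/2}$ rate. One has to be careful that the $O(1/N^2)$ error in \eqref{predictable_quadratic_variation}–\eqref{carre_estimation}, when raised to powers inside Lemma \ref{lemma6.2}, only ever contributes to the stated $O(1/N^p)$ remainder and never to the leading $N^{-p/2}$ term; this is routine but needs to be checked term by term. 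The reduction from general $p$ to dyadic $p$ via Lyapunov's inequality, and the elementary lower bound on $\mu^N_t(\Theta_{t,T}1)$, are then straightforward.
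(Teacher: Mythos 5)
Your martingale estimate (Lemma \ref{lemma6.2} with jump bound \eqref{martingale_jumps} and quadratic variation bound \eqref{carre_estimation}, dominant term $k=q$ giving $N^{-p/2}(T-t)^{p/2}$, Jensen for non-dyadic $p$) matches the paper, and your lower bound on $\mu^N_t(\Theta_{t,T}1)$ is fine up to a factor of $2$ in the exponent. The gap is in your treatment of the drift term. After integrating \eqref{diff_eq_with_M} you are left with $\int_t^T \big(\mu_s(\VV)-\mu_s^N(\VV)\big)\,\mu_s^N(\Theta_{s,T}f)\,ds$, and you correctly observe that the crude bound $|\mu_s(\VV)-\mu_s^N(\VV)|\leq 2\|\VV\|$ destroys the $N^{-p/2}$ rate; but your proposed fix --- closing an integral inequality in $\E[|\mu^N_T(f)-\Phi_{t,T}(\mu^N_t)(f)|^p]^{1/p}$ by Gr\"onwall --- does not go through. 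The integrand involves the \emph{global} error $\mu_s^N(\VV)-\mu_s(\VV)$, which is not the quantity your inequality is in: it compares $\mu_s^N$ to the true marginal $\mu_s$, not to $\Phi_{t,s}(\mu_t^N)$, and at this stage of the argument it is not known to be small (controlling it is precisely the purpose of the whole chain Lemma \ref{lemma_at}--Lemma \ref{global_control} and the bootstrap \eqref{iteration}). Splitting it as $(\mu_s-\Phi_{t,s}(\mu_t^N))(\VV)+(\Phi_{t,s}(\mu_t^N)-\mu_s^N)(\VV)$ does not help either, since the first piece is of the order of the accumulated error at time $t$, which must not appear in the conclusion. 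So the Gr\"onwall closure is either circular or yields a bound involving uncontrolled terms.

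The paper's resolution is to eliminate the drift exactly rather than estimate it: setting $A_t^s=\exp\big(\int_t^s(\mu_u^N(\VV)-\mu_u(\VV))\,du\big)$, equation \eqref{diff_eq_with_M} shows that $s\mapsto A_t^s\,\mu_s^N(\Theta_{s,T}f)$ is a local martingale, so
\begin{equation*}
A_t^T\mu_T^N(f)-\mu_t^N(\Theta_{t,T}f)=\NN^N_T(f):=\int_t^T A_t^s\,d\MM^N_s(\Theta_{\cdot,T}f)\ ,
\end{equation*}
and the same identity for $f\equiv 1$ combined with \eqref{phi_theta} gives the exact algebraic relation
\begin{equation*}
\mu_T^N(f)-\Phi_{t,T}(\mu^N_t)(f)=(A_t^T)^{-1}\big(\NN^N_T(f)-\NN^N_T(1)\,\Phi_{t,T}(\mu^N_t)(f)\big)\ .
\end{equation*}
The deterministic bounds $(A_t^s)^{\pm1}\leq e^{2(T-t)\|\VV\|}$ then supply the exponential prefactor (contributing $e^{2p(T-t)\|\VV\|}$ from $(A_t^T)^{-p}$ and another $e^{2p(T-t)\|\VV\|}$ through the jump and quadratic-variation bounds for $\NN^N$, whence the stated $4p$), and Lemma \ref{lemma6.2} is applied to $\NN^N_T(f)$ and $\NN^N_T(1)$ exactly as in your martingale step. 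You should replace your Minkowski/Gr\"onwall step with this integrating-factor identity; the rest of your argument then carries through.
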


\begin{proof}
This is an adaptation of the first part of the proof of Lemma 5.3 in \cite{rousset2006control}. 
First, consider
\begin{equation}\label{A_t1_t2}
    A_{t_1}^{t_2}:=\exp{\Big(\int_{t_1}^{t_2}\big(\mu_s^N(\VV)-\mu_s(\VV)\big)\,ds\Big)}\ ,
\end{equation}
with $0\leq t_1\leq t_2$. Observe that, by the stochastic differential equation \eqref{diff_eq_with_M}, we can write
\begin{align*}
    d\big(A_t^s \mu_s^N(\Theta_{s,T}f)\big)=A_t^s\,d\MM^N_s(\Theta_{\cdot,T}f)\ ,
\end{align*}
for any $t\leq s\leq T$. Therefore,
\begin{equation}\label{AtT}
    A_t^T \mu_T^N(f)-\mu_t^N(\Theta_{t,T}f)=\int_t^T A_t^s\,d\MM^N_s(\Theta_{\cdot,T}f)\ .
\end{equation}
Fixing $0\leq t\leq T$, the process 
$$ \NN^N_\tau(f):= \int_t^\tau A_t^s \, d\MM^N_s(\Theta_{\cdot,T}f)\,=\, A_t^\tau\cdot \mu_\tau^N\big(\Theta_{\tau,T}f\big)\,-\, \mu^N_t(\Theta_{t,T}f)\ ,$$
with $t\leq \tau \leq T$, as the integral of a progressively measurable process with respect to a local martingale, is itself a local martingale with predictable quadratic variation given by
\begin{equation*}
    \langle\NN^N(f)\rangle_\tau =\int_t^\tau \big(A_t^s\big)^2 d\langle \MM^N_s(\Theta_{\cdot,T}f)\rangle \ ,
\end{equation*}
and jumps bounded by
\begin{equation*}
    \big| \Delta \NN^N_\tau(f)\big|\,\leq\, e^{2(T-t)\|\VV\|} \cdot\frac{4K\,  \beta \,\|f\|}{N} \ ,
\end{equation*}
by Assumption \eqref{bounded_jumps} on bounded jumps, \eqref{martingale_jumps} and Lemma \ref{Theta_estimation}.

Moreover, with \eqref{phi_theta}, we can write
\begin{align*}
    \big|\mu_T^N&(f)-\Phi_{t,T}(\mu^N_t)(f)\big|\\
    &= \Big| \,\mu_T^N(f) -(A_t^T)^{-1}\mu_t^N(\Theta_{t,T}f) - \Big(1-(A_t^T)^{-1}\mu_t^N(\Theta_{t,T}1)\Big)\cdot\Phi_{t,T}(\mu^N_t)(f)\,\Big|\\
    &=(A_t^T)^{-1}\,\Big|\, \NN^N_T(f)\, -\,\NN^N_T(1) \cdot\Phi_{t,T}(\mu^N_t)(f)\,\Big|\ ,
\end{align*}
where the last equality follows by \eqref{AtT}. Noting that $(A_t^T)^{-1}\leq \exp\big(2(T-t)\cdot\|\VV\|\big)$ by definition \eqref{A_t1_t2}, we get
\begin{align}
   \E\Big[ \big|\mu_T^N(f)-\Phi_{t,T}&(\mu^N_t)(f)\big|^p\Big]\nonumber\\
   &\leq\, e^{2p(T-t)\|\VV\|}\, \E\Big[ \,\Big|\, \NN^N_T(f)\, -\,\NN^N_T(1) \cdot\Phi_{t,T}(\mu^N_t)(f)\,\Big|^p\,\Big]\ .\label{at_bound}
\end{align}
By Lemma \ref{lemma6.2}, we have that, for any $q\in \N_0$,
\begin{align*}
    \E\Big[ \big| &\NN^N_T(f)\big|^{2^{q+1}}\Big]\\
    &\leq C_q\,\sum_{k=0}^q \,\Big(e^{2(T-t)\|\VV\|}\cdot\frac{2 K\, \beta \|f\|}{N}\Big)^{2^{q+1}-2^{k+1}} \E\Big[ \big(\big\langle\NN^N_\cdot(f)\big\rangle_T\big)^{2^k} \Big] \\
    &\leq \widetilde{C}_q\,\sum_{k=0}^q \,\Big(e^{2(T-t)\|\VV\|}\cdot\frac{ \|f\|}{N}\Big)^{2^{q+1}-2^{k+1}} \Big( \frac{1}{N}\, \|f\|^2\, (T-t)\Big)^{2^k}\ ,
\end{align*}
where the last inequality follows by \eqref{carre_estimation}. Therefore, for $p=2^{q+1}$, $q\in\N_0$, we get
\begin{equation*}
    \E\Big[ \big| \NN^N_T(f)\big|^{p}\Big]\,\leq\, \widetilde{C}_p\, e^{2p(T-t)\|\VV\|}\, \,\bigg(\frac{\|f\|^p\,(T-t)^{p/2}}{N^{p/2}}\bigg)\ .
\end{equation*}
By Jensen's inequality, this bound holds for any $p\geq 2$. Applying this to inequality \eqref{at_bound}, we obtain the result.

\end{proof}

\begin{lemma}\label{lemma_bt}
Under Assumption \ref{ass_expstab_generic} on asymptotic stability with constants $\alpha >0$ and $\rho\in (0,1)$, we have that for any $p\geq 2$ and any $0\leq t\leq T$ such that $T-t\geq (\log \e\,-\,\log \a)/\log \rho$ for some $\e\in(0,1)$, the following bound holds
\begin{equation*}
    \E\big[|\Phi_{t,T}(\mu^N_t)(f)-\mu_T(f)|^p\big]^{1/p} \leq \frac{4\|f\|\,\a \rho^{T-t}}{1-\e}\ .
\end{equation*}
Furthermore, when $t=0$, there exists a constant $C_p>0$ depending on $p$ such that
\begin{equation*}
    \sup_{T\geq 0}\,\E\big[\big|\Phi_{0,T}(\mu^N_0)(f)\,-\, \mu_T(f)\big|^p\big]^{1/p}\,\leq\,\frac{C_p\,\|f\|}{N^{1/2}}\ .
\end{equation*}

\end{lemma}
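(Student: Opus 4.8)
The plan is to prove the two inequalities separately. The first turns out to be a \emph{deterministic} pointwise estimate coming purely from asymptotic stability, while the second is a classical Monte Carlo estimate exploiting the i.i.d.\ initial condition \eqref{inicon}.

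For the first bound, I would rewrite $\Phi_{t,T}(\mu_t^N)(f)$ using \eqref{phi_theta} and the definition \eqref{normalized_prop} of the propagator, inserting the normalising factor $e^{-\lambda_0(T-t)}$ of the Feynman--Kac semigroup into numerator and denominator:
\[
\Phi_{t,T}(\mu_t^N)(f)\,=\,\frac{\mu_t^N\big(e^{-\lambda_0(T-t)}P^\VV(T-t)f\big)}{\mu_t^N\big(e^{-\lambda_0(T-t)}P^\VV(T-t)1\big)}\ .
\]
By Assumption \ref{ass_expstab_generic} / \eqref{asssta}, the numerator integrand lies pointwise in $[\mu_\infty(f)-\|f\|\alpha\rho^{T-t},\,\mu_\infty(f)+\|f\|\alpha\rho^{T-t}]$ and the denominator integrand in $[1-\alpha\rho^{T-t},\,1+\alpha\rho^{T-t}]$, and integrating against the probability measure $\mu_t^N$ preserves these bounds; arguing exactly as in the derivation of \eqref{lemma_expstab} (add and subtract $\mu_\infty(f)$ in the ratio, and use $|\mu_\infty(f)|\le\|f\|$) yields, with $d:=\alpha\rho^{T-t}<1$, the surely valid bound $|\Phi_{t,T}(\mu_t^N)(f)-\mu_\infty(f)|\le 2\|f\|\alpha\rho^{T-t}/(1-d)$. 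The same reasoning applied to $\mu_T(f)=\mu_0\big(e^{-\lambda_0 T}P^\VV(T)f\big)\big/\mu_0\big(e^{-\lambda_0 T}P^\VV(T)1\big)$, together with $\rho^T\le\rho^{T-t}$, gives $|\mu_T(f)-\mu_\infty(f)|\le 2\|f\|\alpha\rho^{T-t}/(1-d)$. Since the hypothesis $T-t\ge(\log\e-\log\a)/\log\rho$ is exactly $d\le\e$, the triangle inequality gives $|\Phi_{t,T}(\mu_t^N)(f)-\mu_T(f)|\le 4\|f\|\a\rho^{T-t}/(1-\e)$ surely, hence in $L^p$.

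For the $t=0$ bound I would put $g:=\Theta_{0,T}f$ and $h:=\Theta_{0,T}1$, recall from \eqref{normalized_prop} that $\mu_0(g)=\mu_T(f)$ and $\mu_0(h)=1$, and use the algebraic identity
\[
\Phi_{0,T}(\mu_0^N)(f)-\mu_T(f)\,=\,\frac{(\mu_0^N-\mu_0)(g)\,-\,\mu_T(f)\,(\mu_0^N-\mu_0)(h)}{\mu_0^N(h)}\ .
\]
By \eqref{inicon}, $(\mu_0^N-\mu_0)(g)=\frac1N\sum_{i=1}^N\big(g(\xi_0^i)-\mu_0(g)\big)$ is an average of i.i.d.\ centred random variables bounded by $2\|g\|$, so the classical $L^p$ bound for i.i.d.\ sums (Marcinkiewicz--Zygmund / Burkholder) gives $\E[|(\mu_0^N-\mu_0)(g)|^p]^{1/p}\le c_p\|g\|\,N^{-1/2}$, and similarly for $h$. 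By Lemma \ref{Theta_estimation}, $\|g\|\le\b\|f\|$ and $\|h\|\le\b$ uniformly in $T$, and $|\mu_T(f)|\le\|f\|$; so the numerator has $L^p$ norm at most $2c_p\b\|f\|\,N^{-1/2}$, uniformly in $T$.

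The remaining, and genuinely delicate, step is a uniform-in-$T$ lower bound on the random denominator $\mu_0^N(h)=\mu_0^N(\Theta_{0,T}1)$; I do not want to control $1/\mu_0^N(h)$ probabilistically, but rather to show $m_\star:=\inf_{T\ge0}\inf_{x\in E}\Theta_{0,T}1(x)>0$, which then forces $\mu_0^N(h)\ge m_\star$ surely. Fix $T_0$ with $\alpha\rho^{T_0}\le\frac13$. For $T>T_0$, \eqref{asssta} with $f\equiv1$ gives $e^{-\lambda_0 T}P^\VV(T)1(x)\ge1-\alpha\rho^T$ and $\mu_0\big(e^{-\lambda_0 T}P^\VV(T)1\big)\le1+\alpha\rho^T$, hence $\Theta_{0,T}1(x)\ge\frac{1-\alpha\rho^T}{1+\alpha\rho^T}\ge\frac12$. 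For $T\le T_0$ I would use the elementary bounds $P^\VV(T)1(x)\ge e^{-T\|\VV\|}$ and $\mu_0(P^\VV(T)1)\le e^{T\|\VV\|}$ (both immediate from the Feynman--Kac representation of $P^\VV$), giving $\Theta_{0,T}1(x)\ge e^{-2T\|\VV\|}\ge e^{-2T_0\|\VV\|}$. Thus $m_\star\ge\min\{\tfrac12,e^{-2T_0\|\VV\|}\}>0$, and dividing the numerator estimate by $m_\star$ yields
\[
\sup_{T\ge0}\E\big[|\Phi_{0,T}(\mu_0^N)(f)-\mu_T(f)|^p\big]^{1/p}\,\le\,\frac{2c_p\b}{m_\star}\cdot\frac{\|f\|}{N^{1/2}}\,=:\,\frac{C_p\|f\|}{N^{1/2}}\ .
\]
Apart from this positivity of the propagator applied to the constant function, everything is a routine combination of asymptotic stability with the classical i.i.d.\ Monte Carlo estimate, so I expect the only real care to be needed in the bookkeeping around the denominator.
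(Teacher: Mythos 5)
Your proof is correct. The first bound is obtained exactly as in the paper: a deterministic, pointwise consequence of Assumption \ref{ass_expstab_generic} applied to the ratio defining $\Phi_{t,T}(\eta)$ for an \emph{arbitrary} probability measure $\eta$, comparing both $\Phi_{t,T}(\mu_t^N)(f)$ and $\mu_T(f)=\Phi_{t,T}(\mu_t)(f)$ to $\mu_\infty(f)$ and using the hypothesis $\a\rho^{T-t}\leq\e$ to control the denominator; this part matches the paper's argument essentially line by line. For the $t=0$ bound you diverge from the paper in one interesting way. The paper uses the identity $\Phi_{0,T}(\mu^N_0)(f)-\mu_T(f)=(\mu_0^N-\mu_0)(\Theta_{0,T}f)+\Phi_{0,T}(\mu^N_0)(f)\cdot\big(\mu_0-\mu_0^N\big)(\Theta_{0,T}1)$, in which the only random prefactor is $\Phi_{0,T}(\mu^N_0)(f)$, automatically bounded by $\|f\|$ because $\Phi_{0,T}(\mu^N_0)\in\PP(E)$; consequently the paper never needs to control the random denominator $\mu_0^N(\Theta_{0,T}1)$ at all, and the whole estimate reduces to Marcinkiewicz--Zygmund applied to the two i.i.d.\ averages, with $\|\Theta_{0,T}f\|\leq\b\|f\|$ from Lemma \ref{Theta_estimation}. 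Your decomposition instead leaves $\mu_0^N(\Theta_{0,T}1)$ explicitly in the denominator, which forces you to prove the uniform positivity $\inf_{T\geq 0}\inf_{x\in E}\Theta_{0,T}1(x)>0$; your two-regime argument for this (asymptotic stability for $T>T_0$, the crude Feynman--Kac bounds $e^{-T\|\VV\|}\leq P^\VV(T)1(x)\leq e^{T\|\VV\|}$ for $T\leq T_0$) is correct, but it is an extra lemma the paper's bookkeeping renders unnecessary. The trade-off: the paper's route is shorter and cleaner; yours establishes a quantitative lower bound on the normalised propagator applied to $1$ that is of independent use (e.g.\ for controlling weighted estimators), at the cost of a slightly larger constant.
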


\begin{proof}

By definition \eqref{Phi} of $\Phi_{t,T}$, for any $\eta\in\PP(E)$ and $\lambda\in\R$ we have
\begin{align*}
    \Phi_{t,T}(\eta)(f)\,=\, \frac{\eta\big(e^{-(T-t)\lambda}P^\VV (T-t)f\big)}{\eta\big(e^{-(T-t)\lambda}P^\VV (T-t)1\big)}\ .
\end{align*}
Taking $\lambda$ to be the principal eigenvalue of $\LL +\VV$, using Assumption \ref{ass_expstab_generic} on asymptotic stability and the basic fact $\eta(1)=1$, we can write
\begin{align*}
    \eta\big(e^{-(T-t)\lambda}P^\VV (T-t)f\big)\, &\leq\, \mu_\infty(f)\,+\, \|f\|\cdot \a \rho^{T-t}\ ,\\
    \eta\big(e^{-(T-t)\lambda}P^\VV (T-t)1\big)\, &\geq \, 1\,-\,  \a \rho^{T-t}\ .
\end{align*}
Therefore, for $T-t\geq (\log \e\,-\,\log \a)/\log \rho$, for some $\e\in(0,1)$, we have
\begin{align*}
     \Phi_{t,T}(\eta)(f)\,-\, \mu_\infty(f)\,&\leq\, \mu_\infty(f)\cdot\Big(\frac{1}{1-\alpha \rho^{T-t}}\,-\,1\Big)\,+\, \frac{\|f\|\a\rho^{T-t}}{1-\a\rho^{T-t}}\\
     &\leq\,\frac{2\|f\|\,\a \rho^{T-t}}{1-\e}\ ,
\end{align*}
and similarly
\begin{equation*}
    \Phi_{t,T}(\eta)(f)\,-\, \mu_\infty(f)\,\geq\,-\,\frac{2\|f\|\,\a \rho^{T-t}}{1-\e}\ .
\end{equation*}
Therefore,
\begin{align*}
    \E\big[|&\Phi_{t,T}(\mu^N_t)(f)-\mu_T(f)|^p\big]^{1/p}\\
    &\leq \E\big[|\Phi_{t,T}(\mu^N_t)(f)-\mu_\infty(f)|^p\big]^{1/p}+\E\big[|\Phi_{t,T}(\mu_t)(f)-\mu_\infty(f)|^p\big]^{1/p}\\
    &\leq \frac{4\|f\|\,\a \rho^{T-t}}{1-\e}\ .
\end{align*}
Now, for $t=0$, observe that
\begin{align*}
    \Phi_{0,T}&(\mu^N_0)(f)\,-\, \mu_T(f)\\
    &=\,\mu_0^N \big( \Theta_{0,T}(f)\big)-\mu_0\big(\Theta_{0,T}(f)\big)\,+\,\Phi_{0,T}(\mu^N_0)(f) \cdot\big(1-\mu_0^N(\Theta_{0,T}(1)\big)\ .
\end{align*}
Using the basic fact $1=\mu_0(\Theta_{0,T}(1))$, to conclude it is enough to observe that, for any $f\in\CC_b(E)$,
\begin{equation}\label{b0}
    \E\big[ \big| \mu_0^N(f)-\mu_0(f)\big|^p\big] \leq \frac{C_p\,\|f\|^p}{N^{p/2}},
\end{equation}
with $C_p>0$ constant depending on $p$. Indeed, with \eqref{inicon} at time $t=0$, $\mu^N_0 (f)$ is the sum of $N$ i.i.d. random variables with law $f_\# \mu_0$. Inequality \eqref{b0} is then a direct application of Marcinkiewicz-Zygmund/BDG inequalities for i.i.d. variables.

\end{proof}

\begin{lemma}\label{global_control}
Consider a sequence of particle approximations satisfying Assumption \ref{ass_IPS} with empirical distributions $\mu_t^N$ \eqref{emea}. Under Assumption \ref{ass_expstab_generic}, there exists $\delta\in(0,1)$ such that for any $p\geq 2$ there exist $c_p>0$ such that
\begin{equation*}
   \sup_{T\geq 0}\, \E[|\mu_T^N(f)-\mu_T(f)|^p]^{1/p}\,\leq\, \frac{c_p\,\|f\|}{N^{\delta/2}}\ ,
\end{equation*}
for any $N\in\N$ large enough.
\end{lemma}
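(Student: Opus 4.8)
The plan is to combine the decomposition \eqref{decomposition_atbt} with Lemmas \ref{lemma_at} and \ref{lemma_bt}, choosing the intermediate time $t$ as a function of $N$ (and $T$) so as to balance the two resulting error terms. After taking $p$-th roots, Lemma \ref{lemma_at} bounds the first term of \eqref{decomposition_atbt} by a quantity of order $e^{4(T-t)\|\VV\|}\big((T-t)^{1/2}N^{-1/2}+O(N^{-1})\big)\|f\|$: it decays in $N$ but grows exponentially in the length $T-t$ of the free evolution window. Lemma \ref{lemma_bt} bounds the second term by a quantity of order $\rho^{T-t}\|f\|$, once $T-t$ exceeds the fixed threshold $(\log\e-\log\a)/\log\rho$: it decays exponentially in $T-t$. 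The two errors pull in opposite directions, so the natural compromise is a window $T-t$ of order $\log N$.

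Concretely, fix $\theta>0$ small enough that $8\theta\|\VV\|<1$ (possible since $\VV\in\CC_b(E)$ is bounded), fix $\e=1/2$ in Lemma \ref{lemma_bt}, and set
\[
\delta:=\min\Big\{\tfrac{1-8\theta\|\VV\|}{2},\ 2\theta\,|\log\rho|\Big\}\in(0,1)\ .
\]
I would then distinguish two regimes. If $T\geq\theta\log N$, take $t:=T-\theta\log N\geq 0$, so that $T-t=\theta\log N$; for $N$ large this exceeds the threshold of Lemma \ref{lemma_bt}, which therefore bounds the second term of \eqref{decomposition_atbt} by $8\a\|f\|\,\rho^{\theta\log N}=8\a\|f\|\,N^{\theta\log\rho}\leq C\|f\|\,N^{-\delta/2}$, using $\delta\leq 2\theta|\log\rho|$. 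Using $e^{4(T-t)\|\VV\|}=N^{4\theta\|\VV\|}$, Lemma \ref{lemma_at} bounds the first term (after $p$-th roots) by $c_p\,N^{4\theta\|\VV\|}\big((\theta\log N)^{1/2}N^{-1/2}+O(N^{-1})\big)\|f\|$; because $\delta<1-8\theta\|\VV\|$ forces $4\theta\|\VV\|-\tfrac12+\tfrac{\delta}{2}<0$ and the factor $(\log N)^{1/2}$ is sub-polynomial, while also $4\theta\|\VV\|-1<-\tfrac{\delta}{2}$, this is at most $c_p'\|f\|\,N^{-\delta/2}$ for $N$ large. If instead $0\leq T<\theta\log N$, take $t:=0$: the $t=0$ assertion of Lemma \ref{lemma_bt} bounds the second term by $C_p\|f\|\,N^{-1/2}\leq C_p\|f\|\,N^{-\delta/2}$, and Lemma \ref{lemma_at} with $T-t=T<\theta\log N$ gives, as before, a bound of order $N^{4\theta\|\VV\|}\big((\theta\log N)^{1/2}N^{-1/2}+O(N^{-1})\big)\|f\|$ for the first term, since $e^{4T\|\VV\|}<N^{4\theta\|\VV\|}$ and $T^{1/2}<(\theta\log N)^{1/2}$. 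Combining the two regimes gives $\sup_{T\geq 0}\E[|\mu_T^N(f)-\mu_T(f)|^p]^{1/p}\leq c_p\|f\|\,N^{-\delta/2}$ for $N$ large, which is the claim.

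Each individual estimate is elementary once this splitting is fixed. The one genuine point is the choice of a \emph{logarithmic} window, and moreover a \emph{short} one ($\theta$ small relative to $1/\|\VV\|$), so that the exponential prefactor $e^{4(T-t)\|\VV\|}$ produced by Lemma \ref{lemma_at} does not overwhelm the $N^{-1/2}$ gain; the stray $(\log N)^{1/2}$ factor and the $O(N^{-1})$ remainder are harmless because $\delta<1$. I expect the only real bookkeeping obstacle to be ensuring that the final constant is independent of both $T$ and $N$ across the two regimes, and this is guaranteed because Lemmas \ref{lemma_at} and \ref{lemma_bt} are already stated with constants uniform in $T$ and $N$, and because in each regime the relevant bounds involve $T$ only through the inequality that defines the regime.
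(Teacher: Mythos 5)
Your proposal is correct and follows essentially the same route as the paper: the same decomposition \eqref{decomposition_atbt}, the same two lemmas, a logarithmic window $T-t\asymp\log N$ chosen short enough that the exponential prefactor from Lemma \ref{lemma_at} does not destroy the $N^{-1/2}$ gain, and the same two-regime split according to whether $T$ exceeds that window. The only (immaterial) differences are that the paper absorbs the $(T-t)^{1/2}$ factor via $T-t\leq e^{T-t}$ and solves exactly for the optimal exponent, whereas you keep the sub-polynomial $(\log N)^{1/2}$ factor and settle for a slightly suboptimal but explicit $\delta$.
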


\begin{proof}
Recalling decomposition \eqref{decomposition_atbt}, where the first term is estimated in Lemma \ref{lemma_at} and the second in Lemma \ref{lemma_bt}, and using the basic fact $T-t\leq e^{T-t}$, we obtain
\begin{align}\nonumber
    \E[|&\mu_T^N(f)-\mu_T(f)|^p]^{1/p}\\
    &\leq c_p\,\|f\|\,\cdot \,\frac{e^{(4\|\VV\|+1/2)T}\,+\, 1}{N^{1/2}}\ ,\label{t=0}
\end{align}
taking $t=0$, and
\begin{align}\nonumber
    \E[|&\mu_T^N(f)-\mu_T(f)|^p]^{1/p}\\
    &\leq  c_p\|f\|\cdot\Big(\frac{e^{(4\|\VV\|+1/2)\cdot(T-t)}}{N^{1/2}}\,+\, \rho^{T-t}\Big)\ ,\label{T-t}
\end{align}
taking $0\leq t\leq T$ such that $T-t$ is large enough.

The idea is to find $t\geq 0$ and $\e\in(0,1)$  such that
\begin{align*}
    \begin{cases}
    & \frac{e^{(4\|\VV\|+1/2)\cdot(T-t)}}{N^{1/2}}\,\leq\, \frac{1}{N^{\e/2}}\ ,\\
    & \rho^{T-t}\leq \frac{1}{N^{\e/2}}\ .
    \end{cases}
\end{align*}
Recalling that $\log \rho < 0$, the solution is given by
\begin{align}
    \begin{cases}
    & \e\,=\, \frac{-\log\rho}{4\|\VV\|+\frac{1}{2}-\log\rho}\ ,\\
    & t \,=\, T - \,\frac{\log N}{8\|\VV\|+1-2\log\rho}\ ,\label{t_solution}
    \end{cases}
\end{align}
provided $T\geq {\log N}/({8\|\VV\|+1-2\log\rho})$ to ensure that $t\geq 0$. Also observe that for $N$ large enough, $T-t$ satisfies the conditions in Lemma \ref{lemma_bt}.

Otherwise, in case $T< {\log N}/({8\|\VV\|+1-2\log\rho})$, we consider the bound \eqref{t=0} instead, and we obtain
\begin{equation*}
    \frac{e^{(4\|\VV\|+1/2)T}\,+\, 1}{N^{1/2}}\,\leq\, \frac{1}{N^{\overline{\e}/2}}\,+\,\frac{1}{N^{1/2}}\ ,
\end{equation*}
with
\begin{equation*}
    \overline{\e}\,=\, 1\,-\, \frac{8\|\VV\|+1}{8\|\VV\|+1-2\log\rho}\ .
\end{equation*}
Taking $\delta=\min\{ \e, \,\overline{\e}\}$ the result follows from observing that
\begin{equation*}
    \frac{e^{4(T-t)\|\VV\|}}{N}\,=\,\frac{1}{N^\a}\ ,\quad\mathrm{with}\;\a>\frac{1}{2}\ ,
\end{equation*}
for $t=0$ and $T$ at most of order $\log N$ as above, or for $t\geq 0$ given by \eqref{t_solution}.

\end{proof}

\begin{proof}[Proof of Theorem \ref{thm_weak}]

We denote
\begin{equation*}
    I_p(N)\,:=\, \sup_{\|g\|= 1}\sup_{T\geq 0} \E\big[\big| \mu^N_T(g)\,-\,\mu_T(g)\big|^{p}\big]\ ,
\end{equation*}
in accordance with Rousset \cite{rousset2006control}, Section 5.2. 
Using \eqref{M_Theta_centered}, we have
\begin{align*}
    \big|\mu^N_T(f)-\mu_T(f)\big|^p\,\leq\; & 3^p\,\big|\mu^N_0(\Theta_{0,T}\overline{f})\big|^p\,+\,3^p\,\big|\MM^N_T(\Theta_{\cdot,T}\overline{f})\big|^p\,+\\
    &3^p\,\Big(\int_0^T \big|\mu_s^N\big(\Theta_{s,T}\overline{f}\big)\big|\cdot\big|\mu_s^N(\VV)-\mu_s(\VV)\big|\, ds\Big)^p\ ,
\end{align*}
with $\overline{f}=f-\mu_T (f)$ for any $f\in\CC_b(E)$.

First, observe that, similarly to \eqref{b0}, we have
\begin{equation*}
    \E\big[ \big|\mu^N_0(\Theta_{0,T}\overline{f})\big|^p\big]\,=\,\E\big[ \big|\mu^N_0(\Theta_{0,T}{f})-\mu_0(\Theta_{0,T}{f})\big|^p\big]\,\leq\, \frac{C_p\|f\|^p}{N^{p/2}}\ ,
\end{equation*}
for some constant $C_p>0$ depending on $p$. Moreover, by Lemma \ref{lemma6.2} and bound \eqref{carre_estimation}, we get with another $p$-dependent constant
\begin{equation*}
    \E\big[\big|\MM^N_T(\Theta_{\cdot,T}\overline{f})\big|^p\big] \,\leq\, \frac{C_p\|f\|^p}{N^{p/2}}\ .
\end{equation*}

Finally, writing
\begin{align*}
    \big|\mu_s^N&\big(\Theta_{s,T}\overline{f}\big)\big|\cdot\big|\mu_s^N(\VV)-\mu_s(\VV)\big|\\
    &=\, \|\Theta_{s,T}\overline{f}\|^{1-1/p}\cdot \Big(\,\Big|\mu_s^N\Big(\frac{\Theta_{s,T}\overline{f}}{\|\Theta_{s,T}\overline{f}\|}\Big)\Big|\cdot \|\Theta_{s,T}\overline{f}\|^{1/p}\cdot\big|\mu_s^N(\VV)-\mu_s(\VV)\big|\,\Big) \ ,
\end{align*}
and using H{\"o}lder's inequality, we get
\begin{align*}
    \Big(&\int_0^T  \big|\mu_s^N\big(\Theta_{s,T}\overline{f}\big)\big|\cdot\big|\mu_s^N(\VV)-\mu_s(\VV)\big|\, ds\Big)^p\\
    &\leq\, \Big( \int_0^T \|\Theta_{s,T}\overline{f}\|\,ds\Big)^{p-1}\cdot\Big(\int_0^T \Big|\mu_s^N\Big(\frac{\Theta_{s,T}\overline{f}}{\|\Theta_{s,T}\overline{f}\|}\Big)\Big|^p\cdot \|\Theta_{s,T}\overline{f}\|\cdot\big|\mu_s^N(\VV)-\mu_s(\VV)\big|^p\,ds\Big)\\
    &\leq C_p\|\overline{f}\|^{p-1}\,\Big(\int_0^T \Big|\mu_s^N\Big(\frac{\Theta_{s,T}\overline{f}}{\|\Theta_{s,T}\overline{f}\|}\Big)\Big|^p\cdot \|\Theta_{s,T}\overline{f}\|\cdot\big|\mu_s^N(\VV)-\mu_s(\VV)\big|^p\,ds\Big)\ ,
\end{align*}
by Lemma \ref{Theta_estimation}. Using the fact that
\begin{equation*}
    \mu^N_s\big(\Theta_{s,T}\overline{f}\big)\,=\, \mu^N_s\big(\Theta_{s,T}\overline{f}\big)\,-\, \mu_s\big(\Theta_{s,T}\overline{f}\big) \ ,
\end{equation*}
for centered test functions, and applying the Cauchy-Schwarz inequality, we get
\begin{align}
    \E\bigg[\int_0^T \Big|\mu_s^N\Big(&\frac{\Theta_{s,T}\overline{f}}{\|\Theta_{s,T}\overline{f}\|}\Big)\Big|^p\cdot\big|\mu_s^N(\VV)-\mu_s(\VV)\big|^p\cdot \|\Theta_{s,T}\overline{f}\| \,ds\bigg] \nonumber\\ 
    \leq\,& \int_0^T  \E\Big[\Big|\mu^N_s\Big(\frac{\Theta_{s,T}\overline{f}}{\|\Theta_{s,T}\overline{f}\|}\Big)\,-\,\mu_s\Big(\frac{\Theta_{s,T}\overline{f}}{\|\Theta_{s,T}\overline{f}\|}\Big)\Big|^{2p}\Big]^{1/2}\nonumber\\
    &\cdot\|\VV\|^p\, \E\Big[\Big| \mu^N_s\Big(\frac{\VV}{\|\VV\|}\Big)\,-\,\mu_s\Big(\frac{\VV}{\|\VV\|}\Big)\Big|^{2p}\Big]^{1/2}\cdot  \|\Theta_{s,T}\overline{f}\| \ ds\nonumber\\
    \leq\,& \int_0^T I_{2p}(N)\,\|\VV\|^p\cdot \|\Theta_{s,T}\overline{f}\| \ ds\nonumber\\
    \leq\,& C_p\|f\|\,I_{2p}(N)\ .\label{CS}
\end{align}

Combining all together, we obtain
\begin{equation*}
    \E\big[\big| \mu^N_T(f)\,-\,\mu_T(f)\big|^{p}\big]\,\leq C_p\|f\|^p\,\Big(\frac{1}{N^{p/2}}\,+\, I_{2p}(N)\Big)\ ,
\end{equation*}
for any $f\in\CC_b(E)$ and $T\geq 0$. In particular,
\begin{equation}\label{iteration}
    I_p(N)\,\leq\,  C_p\,\Big(\frac{1}{N^{p/2}}\,+\, I_{2p}(N)\Big)\ ,
\end{equation}
for any $p\geq 2$. Applying Lemma \ref{global_control}, we get
\begin{equation*}
    I_p(N)\,\leq\,
    \frac{C_p}{N^{\min\{1,2^k\delta\}p/2}} \ ,
\end{equation*}
for any $k\in\N$, by iteration of \eqref{iteration}. Thus, we can conclude
\begin{equation*}
    I_p(N)\,\leq\, \frac{C_p}{N^{p/2}}\ .
\end{equation*}
This proves the $L^p$-error estimate \eqref{Lp_estimate}. 

We conclude by proving the bias estimate \eqref{bias_estimate}. By Equation \eqref{M_Theta_centered}, we have
\begin{equation*}
    \E\big[\mu^N_T(f)\big]\,-\,\mu_T(f)\,=\, \int_0^T \|\Theta_{s,T}\overline{f}\|\cdot\E\Big[ \mu^N_s\Big(\frac{\Theta_{s,T}\overline{f}}{\|\Theta_{s,T}\overline{f}\|}\Big)\cdot \big(\mu_s(\VV)\,-\, \mu_s^N(\VV)\big)\Big]\, ds\ .
\end{equation*}
By \eqref{CS} for $p=1$, we obtain
\begin{equation*}
    \big|\E\big[\mu^N_T(f)\big]\,-\,\mu_T(f)\big|\,\leq\, C\|f\|\cdot I_2(N)\,\leq\, \frac{C\|f\|}{N} \ .
\end{equation*}

\end{proof}

\section{Interacting particle approximations for dynamic large deviations}
\label{section3}

\subsection{Large deviations and Feynman-Kac models}

Dynamic large deviations of continuous-time jump processes are a common application area of cloning algorithms \cite{giardina,lecomte2007numerical}. For a given process $( X_t :t\geq 0)$ with bounded rates $W(x,dy)=\lambda (x) p(x,dy)$ \eqref{rates} and path space $\Omega$ as outlined in Section \ref{section1}, we consider a time-additive observable $A_T:\O\to \R$, taken to be a real measurable function of the paths of $X_t$ over the time interval $[0,T]$ of the form \cite{chetrite}
\begin{equation}\label{oss_jump}
    A_T(\o):=\frac{1}{T}\sum_{\substack{t\leq T\\\o(t_-)\neq \o(t)}}g\big(\o(t_-),\,\o(t)\big)\,+\,\frac{1}{T}\int_0^T h\big(\o(t)\big)dt.
\end{equation}
Here $g\in\CC_b(E^2)$ is such that $g(x,x)=0$, for any $x\in E$, and $h\in\CC_b(E)$, with $\o\in\O$ a realization of $(X_t :t\geq 0)$. Note that $A_T$ is well defined since the bound on $\l(x)$ implies that the process does not explode and the first sum contains almost surely only finitely many non-zero terms for any $T\geq 0$.

More precisely, we are interested in studying the limiting behaviour, as $T\to\infty$, of the family of probability measures $\P_{\mu_0}(A_T\in\,\cdot\,)=\P_{\mu_0} \circ A_T^{-1}$ on $(\R,\,\BB(\R))$, where $\mu_0$ represents the initial distribution of the underlying process. This can be characterized by the \textit{large deviation principle} (LDP) \cite{dembo, hollander}, in terms of a \textit{rate function}. We assume that an LDP with convex rate function $I$ holds, which can be written as
\begin{align*}
    &\limsup_{T\to\infty}\,\frac{1}{T} \log \P_{\mu_0}(A_T\in C)\,\leq\,-\,\inf_{a\in C}I(a)\ ,\\
    &\liminf_{T\to\infty}\,\frac{1}{T} \log \P_{\mu_0}(A_T\in O)\,\geq\,-\,\inf_{a\in O}I(a) \ ,
\end{align*}
for every $C\subseteq \R$ closed and $O\subseteq \R$ open. For the study of large deviations, a key role is played by the \textit{scaled cumulant generating function} (SCGF)
\begin{equation}\label{SCGF}
    \L_k:=\lim_{T\to \infty}\frac{1}{T}\log \E_{\mu_0}\big[e^{kTA_T} \big] \in (-\infty ,\infty ] .
\end{equation}
Indeed, if the rate function $I$ is convex and the limit $\L_k$ in \eqref{SCGF} exists and is finite for every $k\in \R$, then $I$ is fully  characterized by the SCGF via Legendre duality (see \cite{dembo}, Theorem 4.5.10), i.e.
\begin{equation*}
    \L_k=\sup_{a \in\R}\{ka-I(a)\}\quad\mbox{and}\quad I(a)=\sup_{k\in \R}\{k\, a-\L_k\}.
\end{equation*}

The SCGF is also the object that can be numerically approximated by cloning algorithms \cite{giardina,lecomte2007numerical} and related approaches and our main aim in this section is to illustrate how our results on Feynman-Kac models can be applied here. 
Possible subtleties regarding the LDP are not our focus and we restrict ourselves to settings where $\L_k$ exists and is finite. In the following we introduce the associated Feynman-Kac models in the notation that is established in this context.


\begin{lemma}\label{tilted2}
For any $k \in \mathbb{R}$ the family of operators $\big(P_k(t):t\geq 0\big)$ on $\CC_b(E)$ defined by
\begin{equation}\label{P_k}
    P_{k}(t)f(x)\,:=\, \E_{x}\big[ f\big(X_t\big)\,e^{kt A_t}\big],
\end{equation}
with $f\in \CC_b(E)$, is well defined and it is a non-conservative semigroup, the so-called tilted semigroup. 

Moreover, the infinitesimal generator associated with $\big( P_{k}(t):t\geq 0\big)$, in the sense of the Hille-Yosida Theorem, can be written in the form
\begin{equation}\label{tilt_gen}
\LL_k (f)(x)=\int_E W(x,dy)[e^{kg(x,y)}f(y)-f(x)]\,+\,kh(x)f(x),
\end{equation}
for $f\in \CC_b(E)$ and all $x\in E$, with $g$ and $h$ the bounded continuous functions which characterize $A_T$ via \eqref{oss_jump}. In particular, the semigroup $P_{k}(t)$ satisfies the differential equations
\begin{equation}\label{ddt}
    \frac{d}{dt}P_{k}(t)f\,=\, P_{k}(t)\LL_k (f)\,=\,\LL_k \big(P_k(t)f\big),
\end{equation}
for all $f\in\CC_b(E)$ and $t\geq 0$.
\end{lemma}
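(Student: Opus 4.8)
The plan is to treat $\LL_k$ as a bounded perturbation of the (bounded) generator $\LL$ of the underlying jump process and to identify the probabilistic semigroup $P_k(t)$ with the one $\LL_k$ generates through a first-jump integral equation. First I would record the estimate that makes everything well posed: writing $B_t:=tA_t=\sum_{s\le t,\,X_{s-}\neq X_s}g(X_{s-},X_s)+\int_0^t h(X_s)\,ds$ for the additive functional in \eqref{oss_jump}, boundedness of $\l$, $g$ and $h$ gives $|kB_t|\le |k|\big(\|g\|\,N_t+\|h\|\,t\big)$, where the jump count $N_t$ is stochastically dominated by a Poisson variable of parameter $\|\l\|t$. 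Hence $\E_x\big[e^{|k|\,|B_t|}\big]\le e^{|k|\|h\|t}\exp\!\big(\|\l\|t(e^{|k|\|g\|}-1)\big)<\infty$, so $P_k(t)f(x)$ in \eqref{P_k} is well defined for $f\in\CC_b(E)$, with $\|P_k(t)f\|\le\|f\|\,e^{|k|\|h\|t}\exp\!\big(\|\l\|t(e^{|k|\|g\|}-1)\big)$ and $\sup_{t\le T}\|P_k(t)\|<\infty$. The semigroup identity follows from the additivity $B_{t+s}=B_t+B_s\circ\theta_t$ (with $\theta$ the path-space shift) together with the Markov property, conditioning on $\FF_t$; non-conservativity is immediate since $P_k(t)\1(x)=\E_x[e^{kB_t}]$ is not identically $1$.

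Next I would identify the generator. Decompose the candidate operator \eqref{tilt_gen} as $\LL_k=\LL+V_k$ with $V_kf(x):=\int_E W(x,dy)\big(e^{kg(x,y)}-1\big)f(y)+kh(x)f(x)$. Since $W(x,\cdot)$ has total mass $\l(x)\le\|\l\|$, the operator $\LL$ is bounded on $\CC_b(E)$ (this is exactly the full-domain statement recalled in Section~\ref{section1}), and so is $V_k$; the continuity of $\l$ and of $x\mapsto p(x,\cdot)$ together with $g\in\CC_b(E^2)$ and $h\in\CC_b(E)$ further ensures that $V_k$ maps $\CC_b(E)$ into itself. Hence $\LL_k$ is a bounded operator on $\CC_b(E)$ leaving $\CC_b(E)$ invariant, so it exponentiates directly: $\big(e^{t\LL_k}\big)_{t\ge0}$ is a norm-continuous, non-conservative semigroup on $\CC_b(E)$ satisfying both the backward and forward equations in \eqref{ddt}, and $\LL_k$ is its generator in the Hille--Yosida sense.

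It then remains to show $P_k(t)=e^{t\LL_k}$. For this I would condition on the first jump: from $x$ the process waits an $\mathrm{Exp}(\l(x))$ time and then jumps to $y\sim p(x,\cdot)$, and the strong Markov property together with the additivity of $B$ yields the Volterra equation
\begin{equation*}
P_k(t)f(x)=e^{-(\l(x)-kh(x))t}f(x)+\int_0^t \l(x)\,e^{-(\l(x)-kh(x))s}\!\int_E p(x,dy)\,e^{kg(x,y)}\,P_k(t-s)f(y)\,ds .
\end{equation*}
Writing $\LL_k=D_k+R_k$ with $D_kf(x)=(kh(x)-\l(x))f(x)$ and $R_kf(x)=\l(x)\int_E p(x,dy)\,e^{kg(x,y)}f(y)$, Duhamel's identity $e^{t\LL_k}=e^{tD_k}+\int_0^t e^{(t-s)D_k}R_k e^{s\LL_k}\,ds$ shows that $t\mapsto e^{t\LL_k}f$ solves the same equation; since both families are bounded uniformly on compact time intervals, a Gronwall/Picard argument gives uniqueness of the bounded solution and hence the identification (equivalently, one expands $e^{kB_t}$ along the successive jump times into the Dyson/Feynman--Kac series and matches it term by term with the expansion of $e^{t(\LL+V_k)}$, as in \cite{del2004feynman}). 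I expect the only real friction to be the regularity bookkeeping in the second step — verifying that $V_k$ genuinely preserves $\CC_b(E)$ when the integrand $e^{kg(x,y)}$ itself depends on $x$, which on a compact state space reduces to uniform continuity of $g$ and an $\e/3$ estimate — together with the careful use of the strong Markov property and the interchange of summation and integration in the Dyson expansion, both legitimate because non-explosion makes $N_t$ integrable.
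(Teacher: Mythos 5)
Your proof is correct, but it takes a genuinely different route from the paper, which simply defers to \cite{chetrite}, Appendix A.1, where the tilted generator is obtained by formally differentiating the Feynman--Kac expectation $\E_x[f(X_t)e^{ktA_t}]$ at $t=0$. You instead give a self-contained argument tailored to the bounded-rate pure jump setting: (i) the domination of the jump count by a Poisson variable of parameter $\|\l\|t$ gives the a priori bound $\|P_k(t)f\|\le \|f\|e^{|k|\|h\|t}\exp(\|\l\|t(e^{|k|\|g\|}-1))$, which makes \eqref{P_k} well defined and, with the additivity of the functional and the Markov property, yields the semigroup law; (ii) since $\LL$ has full domain $\CC_b(E)$ and the perturbation $V_kf(x)=\int_E W(x,dy)(e^{kg(x,y)}-1)f(y)+kh(x)f(x)$ is bounded, $\LL_k=\LL+V_k$ exponentiates to a norm-continuous semigroup satisfying both identities in \eqref{ddt}; (iii) the first-jump Volterra equation matched against Duhamel's formula, plus Gronwall uniqueness, identifies $P_k(t)=e^{t\LL_k}$. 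This is a complete and rigorous chain; the only points deserving explicit verification are the ones you already flag, namely that $V_k$ (equivalently $R_k$) maps $\CC_b(E)$ into itself — which follows from the standing continuity assumptions on $\l$, $p(x,\cdot)$ and $g$ by the same argument that makes $\LL$ preserve $\CC_b(E)$ — and the use of the strong Markov property at the first jump time, legitimate by non-explosion. What your approach buys is an elementary proof that avoids any appeal to the external reference and makes transparent exactly where boundedness of $\l$, $g$ and $h$ enters; what the paper's citation buys is brevity and coverage of more general additive functionals treated in \cite{chetrite}. One cosmetic remark: ``non-conservative'' should be read as ``not necessarily conservative'' — for $k=0$ (or $g=h=0$) the semigroup does preserve constants — but this does not affect the argument.
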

\begin{proof}
See \cite{chetrite}, Appendix A.1.

\end{proof}

Observe that, if the SCGF \eqref{SCGF} is independent of the choice of the initial distribution $\mu_0$, it can be written in terms of the tilted semigroup as
\begin{equation}\label{SCGF2}
    \Lambda_k =\lim_{t\to\infty} \frac{1}{t}\log \big( P_k (t)\, 1(x)\big),
\end{equation}
for all $x\in E$, moreover $\Lambda_k$ is the spectral radius of the generator $\LL_k$ (see also \eqref{invariant} below). With Assumption \ref{ass_expstab_generic} on asymptotic stability, $\Lambda_k$ is also the principal eigenvalue of $\LL_k$ and
there exists a probability measure $\mu_\infty =\mu_{\infty ,k} \in\PP (E)$\footnote{To avoid notation overload, we omit writing explicitly the dependence of certain quantities on the fixed parameter $k$ in the rest of this section.} and constants $\alpha >0$ and $\rho\in(0,1)$ such that
\begin{equation}\label{asymptotic_stability}
    \big\| e^{-t\L_k} P_k(t)f(\cdot)-\mu_\infty (f)\big\|\leq \|f\|\cdot \a\rho^t,
\end{equation}
for every $t\geq 0$ and $f\in\CC_b (E)$.
Note that this implies the independence of the SCGF from the initial distribution, $\mu_0$, and thus \eqref{SCGF2} holds for every initial state $x\in E$. 
Note that \eqref{asymptotic_stability} implies in particular that $\mu_0 e^{-t\L_k} P_k(t)$ converges weakly to $ \mu_\infty$ for all initial distributions $\mu_0$, and that $\mu_\infty$ is the unique invariant probability measure for the modified semigroup $t\mapsto e^{-t\L_k} P_k(t)$. Therefore we have from the generator $\LL_k -\Lambda_k$ of this semigroup that
\begin{equation}\label{invariant}
    \mu_\infty \big(\LL_k (f)\big)=\Lambda_k \mu_\infty (f)\quad\mbox{for all }f\in\CC_b(E)\ .
\end{equation}

Neither the semigroup $P_k (t)$ nor the modified one $e^{-t\Lambda_k} P_k (t)$ conserve probability, and therefore they do not provide a corresponding process to sample from and use standard MCMC methods to estimate the SCGF $\L_k$. This can be achieved by interpreting the tilted generator $\LL_k$ through Feynman-Kac models analogous to Lemma \ref{tilted}, so that we can apply our results from Section \ref{section2}.

\begin{lemma}\label{lemma_LLk}
The infinitesimal generator $\LL_k$ \eqref{tilt_gen} can be written as
\begin{equation}\label{LLk}
    \LL_k (f)(x)\,=\,  \widehat{\LL}_k (f)(x) \,+\, \VV_k(x)\cdot f(x),
\end{equation}
 for all $f\in \CC_b(E)$ and $x\in E$. Here 
\begin{equation}\label{widehat}
    \widehat{\LL}_k (f)(x):=\int_E W(x,dy)e^{kg(x,y)} [f(y)-f(x)]
\end{equation}
 is the generator of a pure jump process with modified rates $W(x,dy)\,e^{kg(x,y)}$, and
 \begin{equation}\label{VV}
    \VV_k(x):= \widehat{\l}_k (x)-\l(x)+k h(x)\,\in\,\CC_b(E),
\end{equation}
is a diagonal potential term where $\widehat{\l}_k (x):=\int_{E}W(x,dy)e^{kg(x,y)}$ is the escape rate of $\widehat{\LL}_k$.
\end{lemma}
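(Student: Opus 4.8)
The plan is to obtain \eqref{LLk} by a direct rearrangement of the explicit expression \eqref{tilt_gen} for $\LL_k$, separating the genuinely off-diagonal (jump) contribution from the diagonal one, and then to check that the two resulting pieces $\widehat{\LL}_k$ and $\VV_k$ have the regularity and structural properties claimed in the statement.

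First I would start from
\[
\LL_k (f)(x)=\int_E W(x,dy)\big[e^{kg(x,y)}f(y)-f(x)\big]+kh(x)f(x)
\]
and add and subtract $\int_E W(x,dy)\,e^{kg(x,y)}f(x)$ inside the integral. This rewrites the integrand as $e^{kg(x,y)}\big(f(y)-f(x)\big)$ plus the leftover $\big(e^{kg(x,y)}-1\big)f(x)$, so that
\[
\LL_k(f)(x)=\int_E W(x,dy)\,e^{kg(x,y)}\big(f(y)-f(x)\big)+\Big(\int_E W(x,dy)\big(e^{kg(x,y)}-1\big)\Big)f(x)+kh(x)f(x).
\]
The first term is by definition $\widehat{\LL}_k(f)(x)$ in \eqref{widehat}. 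For the coefficient of $f(x)$ I would use that $W(x,dy)=\l(x)p(x,dy)$ with $p(x,\cdot)$ a probability kernel, so $\int_E W(x,dy)=\l(x)$ and $\int_E W(x,dy)e^{kg(x,y)}=\widehat{\l}_k(x)$; hence that coefficient equals $\widehat{\l}_k(x)-\l(x)+kh(x)=\VV_k(x)$, which is precisely \eqref{VV}. This establishes \eqref{LLk} for every $f\in\CC_b(E)$ and $x\in E$.

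It then remains to justify the two structural assertions. For $\widehat{\LL}_k$ I would observe that its rates $\widehat{W}_k(x,dy):=e^{kg(x,y)}W(x,dy)$ can be written as $\widehat{\l}_k(x)\widehat{p}_k(x,dy)$ with $\widehat{p}_k(x,dy):=\widehat{W}_k(x,dy)/\widehat{\l}_k(x)$ a probability kernel, well defined since $\widehat{\l}_k(x)\ge\l(x)e^{-|k|\,\|g\|}>0$, and that $\widehat{\l}_k$ is strictly positive, bounded (by $\|\l\|\,e^{|k|\,\|g\|}$) and continuous, while $x\mapsto\widehat{p}_k(x,A)$ is continuous for every $A\in\BB(E)$. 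Granted these, the same result invoked in Section \ref{section1} (\cite[p.~162]{ethier2009markov}) shows that $\widehat{\LL}_k$ is the full-domain generator of a nonexplosive pure jump Feller process, as claimed; and $\VV_k=\widehat{\l}_k-\l+kh\in\CC_b(E)$ follows at once from the boundedness and continuity of $\widehat{\l}_k$, $\l$ and $h$. The only point needing care — and hence the main (if mild) obstacle — is deriving the continuity of $\widehat{\l}_k$ and of $x\mapsto\widehat{p}_k(x,A)$ from the standing hypotheses, since passing the continuous, $x$-dependent weight $e^{kg(x,\cdot)}$ through the integral $\int_E e^{kg(x,y)}p(x,dy)$ requires combining the (joint) continuity of $g$ with the continuity of $x\mapsto p(x,A)$ via a uniform approximation of bounded measurable integrands by simple functions; everything else is bookkeeping.
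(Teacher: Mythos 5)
Your proposal is correct and is exactly the computation the paper has in mind: its proof consists of the single line ``follows directly from the definition of $\LL_k$ in \eqref{tilt_gen}'', i.e.\ precisely your add-and-subtract of $\int_E W(x,dy)\,e^{kg(x,y)}f(x)$ together with $\int_E W(x,dy)=\l(x)$ and $\int_E W(x,dy)\,e^{kg(x,y)}=\widehat{\l}_k(x)$. Your additional verification that $\widehat{\l}_k$ and $\widehat{p}_k$ inherit the boundedness and continuity of the original rates (using that $g$ is bounded and continuous) is a sensible extra check that the paper leaves implicit, not a different route.
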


\begin{proof}
Follows directly from the definition of $\LL_k$ in \eqref{tilt_gen}.

\end{proof}
In analogy with \eqref{rates}, in the following we also use the notation with a probability kernel
\begin{equation}\label{ratesk}
    W(x,dy) e^{kg(x,y)} = \widehat{\l}_k (x)\,\widehat{p}_k (x,dy)\ .
\end{equation}
Observe that 
\begin{equation}\label{muVk_Lk}
    \LL_k (1)(x)=\VV_k(x),
\end{equation}
thus, we get with \eqref{invariant} another representation of the SCGF,
\begin{equation}\label{SCGF_mu_infty}
\Lambda_k=\mu_\infty (\VV_k )\ .
\end{equation}

Recall the unnormalized and normalized versions of the Feynman-Kac measures defined in \eqref{nu} and \eqref{def_mu} for a given initial distribution $\mu_0 \in\PP (E)$,
\[
\nu_t (f)=\mu_0 \big( P_k (t)f\big)\quad\mbox{and}\quad \mu_t (f)=\nu_t (f)/\nu_t (1)\ ,\quad f\in\CC_b (E)\ ,
\]
and that asymptotic stability \eqref{asymptotic_stability} implies that $\mu_t \to\mu_\infty$ weakly as $t\to\infty$. This suggests the following finite-time approximations for $\Lambda_k$.

\begin{prop}\label{Lk_mu}
For any $k \in \mathbb{R}$ and every $t\geq 0$, we have that
\begin{equation*}
    \log \E_{\mu_0}\big[e^{kt A_t}\big]\,=\, \int_0^t \mu_{s} (\VV_k)\,ds,
\end{equation*}
where $\VV_k$ is defined in \eqref{VV}. In particular, if asymptotic stability \eqref{asymptotic_stability} is satisfied,
\begin{equation*}
    \frac{1}{T}\int_0^T \mu_{s} (\VV_k)\,ds\,\to\,\L_k \quad\mbox{as }T\to\infty\ .
\end{equation*}
\end{prop}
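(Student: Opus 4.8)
The plan is to establish the exact identity first, by a short differential-equation argument, and then obtain the time-averaged limit from the quantitative ergodicity of $\mu_t$. The starting point is to identify the tilted semigroup with a Feynman-Kac semigroup: by Lemma \ref{lemma_LLk} the tilted generator decomposes as $\LL_k=\widehat{\LL}_k+\VV_k$, and since $P_k(t)$ solves the backward equation \eqref{ddt} with generator $\LL_k$ while the Feynman-Kac semigroup $P^{\VV_k}(t)$ of Lemma \ref{tilted} (built from the base jump process $\widehat{\LL}_k$ and the potential $\VV_k$) solves the same backward equation with the same initial condition, uniqueness in the Hille-Yosida theorem gives $P_k(t)=P^{\VV_k}(t)$ for every $t\geq 0$. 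Consequently the unnormalised marginal $\nu_t$ of \eqref{nu} associated to $\mu_0$, $\widehat{\LL}_k$ and $\VV_k$ satisfies $\nu_t(1)=\mu_0\big(P_k(t)1\big)=\E_{\mu_0}\big[e^{ktA_t}\big]$, with $\nu_0(1)=\mu_0(1)=1$, and these $\nu_t$, $\mu_t$ are exactly the Feynman-Kac objects to which the results of Section \ref{section1} apply.

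Next I would differentiate $\log\nu_t(1)$. Plugging $f\equiv 1$ into the evolution equation \eqref{nu_eq} and using $\widehat{\LL}_k(1)=0$ gives $\frac{d}{dt}\nu_t(1)=\nu_t(\VV_k)$; since $\VV_k\in\CC_b(E)$ and $\l,\widehat{\l}_k$ are bounded, $\nu_t(1)$ stays strictly positive and finite, so dividing through and using the definition \eqref{def_mu} of $\mu_t$ yields $\frac{d}{dt}\log\nu_t(1)=\mu_t(\VV_k)$. Integrating over $[0,t]$ with $\log\nu_0(1)=0$ produces $\log\E_{\mu_0}\big[e^{ktA_t}\big]=\int_0^t\mu_s(\VV_k)\,ds$, which is the first assertion.

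For the convergence statement I would appeal directly to asymptotic stability rather than to the definition of the SCGF. Applying the bound \eqref{lemma_expstab} (equivalently \eqref{asymptotic_stability}) to $f=\VV_k$ gives $\big|\mu_s(\VV_k)-\mu_\infty(\VV_k)\big|\leq\|\VV_k\|\,\tilde{\a}\,\rho^s$, and \eqref{SCGF_mu_infty} identifies $\mu_\infty(\VV_k)=\L_k$. Hence
\[
\Big|\frac{1}{T}\int_0^T\mu_s(\VV_k)\,ds-\L_k\Big|\;\leq\;\frac{1}{T}\int_0^T\|\VV_k\|\,\tilde{\a}\,\rho^s\,ds\;\leq\;\frac{\|\VV_k\|\,\tilde{\a}}{T\,|\log\rho|}\;\longrightarrow\;0
\]
as $T\to\infty$, since $0<\rho<1$, which gives the claim. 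I do not anticipate a genuine obstacle here: the only point needing a little care is the identification $P_k(t)=P^{\VV_k}(t)$ (so that the $\nu_t$, $\mu_t$ of this section are legitimately the Feynman-Kac marginals of Section \ref{section1} with base generator $\widehat{\LL}_k$ and potential $\VV_k$), together with the positivity of $\nu_t(1)$ that makes the logarithmic derivative well defined; everything else is a one-line ODE computation plus the exponential ergodicity bound already in hand.
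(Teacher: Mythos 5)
Your proof is correct and follows essentially the same route as the paper: the identity comes from differentiating $\log\nu_t(1)$ via the evolution equation \eqref{nu_eq} together with $\LL_k(1)=\VV_k$ and $\nu_t(1)=\E_{\mu_0}[e^{ktA_t}]$, exactly as in the paper (your explicit identification $P_k(t)=P^{\VV_k}(t)$ is a point the paper leaves implicit, and is a worthwhile clarification). For the limit, the paper simply invokes the definition of the SCGF, whereas you use the exponential ergodicity bound \eqref{lemma_expstab} together with $\L_k=\mu_\infty(\VV_k)$; this is precisely the paper's Lemma \ref{lemma_conv_Lk} with $a=0$, so it is a valid and slightly more quantitative variant of the same argument.
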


\begin{proof}
Recalling the evolution equation \eqref{nu_eq} of $\nu_{t}$, we have
\begin{equation*}
    \frac{d}{dt} \log \nu_{t}(1)\, =\, \frac{1}{\nu_{t}(1)}\cdot \frac{d}{dt}\nu_{t}(1)\,=\, \frac{\nu_{t}\big(\LL_k( 1)\big)}{\nu_{t}(1)}\,=\,\mu_{t}\big(\LL_k (1)\big).
\end{equation*}
And, thus,
\begin{equation*}
    \nu_{t}(1)\,=\,\exp\bigg( \int_0^t \mu_{s} \big(\LL_k (1)\big)\, ds \bigg),
\end{equation*}
since $\nu_{0}(1)=1$. We can conclude by observing that $\LL_k (1)(x) = \VV_k(x)$ and
\begin{equation}\label{nut1}
   \nu_{t}(1)= \E_{\mu_0}\big[e^{kt A_t}\big],
\end{equation}
using that the SCGF is well defined under asymptotic stability \eqref{asymptotic_stability}.

\end{proof}



For any $t<T$, we define
\begin{equation}\label{lktt}
    \L_k^{t,T}:=\frac{1}{T-t}\int_{t}^T \mu_s(\VV_k)ds
\end{equation}
as a finite-time approximation for $\L_k$.

\begin{lemma}\label{lemma_conv_Lk}
For any $k\in\R$, under asymptotic stability \eqref{asymptotic_stability} with $\rho\in (0,1)$, there exists a constant $\a'>0$ such that
\begin{equation*}
    \big|\L^{aT,T}(k)-\L(k)\big|\leq \|\VV_k\|\cdot\frac{\a'\,\rho^{aT}}{(1-a)T}\ ,
\end{equation*}
for any given $a\in[0,1)$ and $T> 0$.
\end{lemma}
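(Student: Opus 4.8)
The plan is to combine the identity $\L_k=\mu_\infty(\VV_k)$ from \eqref{SCGF_mu_infty} with the exponential relaxation of $\mu_s$ towards $\mu_\infty$ implied by asymptotic stability \eqref{asymptotic_stability}, and then average over $s\in[aT,T]$; this is simply the quantitative refinement of Proposition \ref{Lk_mu}.

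First I would record the quantitative relaxation bound for $\mu_s$. Since in this section \eqref{asymptotic_stability} plays exactly the role of Assumption \ref{ass_expstab_generic} for the generator $\LL_k$, the same elementary estimate that produced \eqref{lemma_expstab} from \eqref{assstab} gives constants $\tilde{\a}\geq 0$ and $\rho\in(0,1)$ with
\[
\big|\mu_s(f)-\mu_\infty(f)\big|\leq\|f\|\,\tilde{\a}\,\rho^s\qquad\text{for every }s\geq 0,\ f\in\CC_b(E).
\]
Applying this with $f=\VV_k$ and using $\L_k=\mu_\infty(\VV_k)$ yields $|\mu_s(\VV_k)-\L_k|\leq\|\VV_k\|\,\tilde{\a}\,\rho^s$ for all $s\geq 0$. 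Next, because $\frac{1}{(1-a)T}\int_{aT}^T\L_k\,ds=\L_k$, the definition \eqref{lktt} of $\L_k^{aT,T}$ rewrites as
\[
\L_k^{aT,T}-\L_k=\frac{1}{(1-a)T}\int_{aT}^T\big(\mu_s(\VV_k)-\L_k\big)\,ds,
\]
so that, using the previous bound together with $\int_{aT}^\infty\rho^s\,ds=\rho^{aT}/|\log\rho|$,
\[
\big|\L_k^{aT,T}-\L_k\big|\;\leq\;\frac{\|\VV_k\|\,\tilde{\a}}{(1-a)T}\int_{aT}^\infty\rho^s\,ds\;=\;\|\VV_k\|\cdot\frac{\a'\,\rho^{aT}}{T},\qquad \a':=\frac{\tilde{\a}}{(1-a)\,|\log\rho|},
\]
which is the asserted inequality (with $\a'$ replaced by any positive constant in the degenerate case $\tilde{\a}=0$, and taken uniformly in $a$ on any compact subinterval of $[0,1)$).

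There is no genuine obstacle here: the only mildly technical point is the first step, i.e.\ upgrading \eqref{asymptotic_stability} to the bound on $|\mu_s(\VV_k)-\mu_\infty(\VV_k)|$ valid for all $s\geq 0$ — including small $s$, where one merely enlarges the prefactor $\tilde{\a}$ exactly as in the passage following \eqref{assstab} — after which the result is an elementary integration of an exponential. (Alternatively one could argue directly from $\nu_s(1)=\exp\!\big(\int_0^s\mu_u(\VV_k)\,du\big)$, cf.\ the proof of Proposition \ref{Lk_mu}, together with the estimate $|e^{-s\L_k}\nu_s(1)-1|\leq\a\rho^s$ obtained by applying \eqref{asymptotic_stability} to $f\equiv 1$, which gives $\log\nu_s(1)=s\L_k+O(\rho^s)$ and hence the same conclusion upon taking the difference at times $T$ and $aT$.)
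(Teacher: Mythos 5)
Your proof is correct and follows essentially the same route as the paper: both use the exponential relaxation bound \eqref{lemma_expstab} applied to $f=\VV_k$, the identity $\L_k=\mu_\infty(\VV_k)$ from \eqref{SCGF_mu_infty}, and an elementary integration of $\rho^s$ over $[aT,T]$, arriving at the same constant $\a'=\tilde{\a}/\big((1-a)|\log\rho|\big)$. Your remark that $\a'$ depends on $a$ is a fair observation that applies equally to the paper's own argument.
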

\begin{proof}
By \eqref{lemma_expstab}, we have
\begin{align*}
    \bigg| \frac{1}{(1-a)T}\int_{aT}^T\mu_t(\VV_k)dt\,-\,\mu_\infty(\VV_k)\bigg|\,&\leq\,\frac{1}{(1-a)T}\int_{aT}^{T}\big|\mu_t(\VV_k)-\mu_\infty(\VV_k) \big| dt\\
    &\leq \frac{1}{(1-a)T}\int_{aT}^T  \|\VV_k\| \cdot{\tilde{\a}\,\rho^{t}}dt\\
    &=\frac{\tilde{\a}\,\|\VV_k\|}{(1-a)T}\cdot\frac{\rho^{T}-\rho^{aT}}{\log \rho}\\
    &\leq\|\VV_k\|\cdot \frac{\a'\,\rho^{aT}}{(1-a)T}\ ,
\end{align*}
where $\a':=\tilde{\a}/(-\log\rho)>0$, using the basic fact $0\leq \rho^{aT}-\rho^T \leq \rho^{aT}$. In particular, $\lim_{T\to\infty}\L^{aT,T}(k)=\mu_\infty(\VV_k)=\L(k)$, by \eqref{SCGF_mu_infty}.

\end{proof}

Note that for $a=0$ the above result only implies a convergence rate of order $1/T$, since errors from the arbitrary initial condition have to be averaged out over time. In contrast for $a>0$ (corresponding to the usual idea of burn-in in conventional Markov chain Monte Carlo approximations -- see \cite{mcmc:tutorial:GS11}, for example), we get a much better exponential rate of convergence dominated by the asymptotic stability parameter $\rho \in (0,1)$.

\subsection{Estimation of the SCGF}\label{4.1}

In this section we establish the convergence of estimators of the SCGF, $\L_k$ \eqref{SCGF}, provided by interacting particle approximations.
Approximating $\mu_t$ by the empirical distribution $\mu^N_t$ \eqref{emea} associated to an interacting particle system, we can estimate $\L^{t,T}_k$ with 
\begin{equation}\label{LTN}
    \L_k^{t,T,N}:= \frac{1}{T-t}\int_{t}^T \mu_s^N(\VV_k)\,ds\ .
\end{equation}

Note that, choosing $f\equiv 1$ in Proposition \ref{prop_unbiasedness} and \eqref{nut1} implies that $\exp\big({t\cdot\Lambda_k^{0,t,N}}\big)$ is an unbiased estimator of $\exp\big({t\cdot\Lambda_k^{0,t}}\big)$. Recall that particle approximations are characterized by a sequence of IPS generators $(\overline{L}^N)_{N\in\N}$ on $\CC_b (E^N )$, based on the McKean generators \eqref{mcKean_generator}
\begin{equation*}
    \overline{\LL}_{\mu ,k} :=\widehat{\LL}_k +\widetilde{\LL}_{\mu ,k}\quad\mbox{for all } \mu\in\PP (E)\ ,
\end{equation*}
where $\widetilde{\LL}_{\mu ,k}$ describes the selection dynamics of the McKean model as in Lemma \ref{sufficient_conditions}, with examples in \eqref{mcKean1} or \eqref{mcKean2}. Due to tilted dynamics explained in Lemma \ref{lemma_LLk} we have an additional dependence on the parameter $k$. 

\begin{prop}\label{thm_Lp}
Given $k\in\R$, let $(\overline{L}^N_k)_{N\in\N}$ be a sequence of IPS generators satisfying Assumption \ref{ass_IPS} with McKean generators $\overline{\LL}_{\mu ,k}$. 
Under asymptotic stability \eqref{asymptotic_stability} with $\rho\in (0,1)$, for every $p\geq 2$ and $a\in [0,1)$ there exist constants $c_p, \, c',\,\,\a'>0$ independent of $N$ and $T$, such that
\begin{equation}\label{bound1}
    \E\Big[ \,\big| \L_k^{aT,T,N}-\,\L_k\big|^p\Big]^{1/p}\,\leq\, \frac{c_p}{{N}^{1/2}}\,+\, \frac{\a'\cdot\rho^{aT}}{(1-a)T}\ ,
\end{equation}
and
\begin{equation}\label{bound2}
     \Big|\E\Big[  \L_k^{aT,T,N}\Big]-\,\L_k\Big|\,\leq\, \frac{c'}{N}\,+\, \frac{\a'\cdot\rho^{aT}}{(1-a)T}\ ,
\end{equation}
for any $N\in\N$ large enough and $T>0$.
\end{prop}

\begin{proof}
First, note that
\begin{align*}
    \E\big[ \,\big| \L_k^{aT,T,N}-\,\L_k\big|^p\big]^{1/p}\leq \E\big[ \,\big| \L_k^{aT,T,N}-\,\L_k^{aT,T}\big|^p\big]^{1/p}+\, \,\big|\L_k^{aT,T}-\,\L_k\big|\ .
\end{align*}
The bound for the second term is given in Lemma \ref{lemma_conv_Lk}, whereas we can bound the first term by observing that
\begin{align*}
    \E\Big[ \,\big| \L_k^{aT,T,N}-\,\L_k^{aT,T}\big|^p\Big]^{1/p}
    \leq \frac{1}{(1-a)T} \int_{aT}^T \E\big[ \,\big| \mu^N_t(\VV_k)-\mu_t(\VV_k)\big|^p\big]^{1/p}\, dt\ ,
\end{align*}
and applying Theorem \ref{thm_weak}. The second claim can be established similarly.

\end{proof}

Proposition \ref{thm_Lp} provides the $L^p$ and 
bias estimates of the approximation error with order of convergence respectively given by $1/\sqrt{N}$ and $1/N$.
The necessarily finite simulation time $T$ leads to an additional error of order ${\rho^{aT}}/{T}$, with $\rho\in(0,1)$, which is controlled by asymptotic stability properties of the process as summarized in Lemma \ref{lemma_conv_Lk}. 
Ideally, during simulations we want to choose the final
time $T = T(N)$ with respect to the population size $N$ in order to balance both terms in
\eqref{bound1}, resp. \eqref{bound2}. The details depend on asymptotic stability properties of the process and values of constants, but it is clear in general that choosing any $T(N)\gg N$ would only give the same order of convergence as $T(N)\approx N$, which is computationally cheaper. 
Proposition \ref{thm_Lp} also implies that $\L_k^{aT,T,N}$ converges almost surely to $\L_k^{aT,T}$ as $N\to \infty$. 

\subsection{The Cloning Factor}
Most results in the physics literature do not use the estimator $\L_k^{aT,T,N}$ \eqref{LTN} based on the ergodic average of the mean fitness of the clone ensemble, but an estimator based on a so-called `cloning factor' (see, e.g.,\ \cite{giardina, giardina2011simulating, perez2019sampling}). This is essentially a continuous-time jump process $(C^N_t :t\geq 0)$ on $(0,\infty )$ with $C^N_0=1$, where at each cloning event of size $n\in \N_0\cup\{-1\}$ at a given time $\tau$, the value is updated as
\begin{equation*}
    C^N_t=C^N_{t-}\,\Big(1+\frac{n}{N}\Big)\ ,
\end{equation*}
where $n=-1$ occurs when there is a 'killing' event.
In our context, we can define the dynamics of $C^N_t$ jointly with the cloning algorithm 
via an extension of the cloning generator $\overline{L}^N_{c,k}$ \eqref{genL} as introduced in Section \ref{subsection_cloning}, with exit rate $\lambda (x)$ and probability kernel $p(x,dy)$ replaced by $\widehat{\lambda}_k$ and $\widehat{p}_k$, respectively. On the state space $E^N \times (0,\infty)$ define
\begin{align}
    \overline{L}^{(N,\star)}_{c,k} (F^\star )&(\underline{x},\varsigma):=\nonumber\\
    \sum_{i=1}^N \,\bigg(&\widehat{\lambda}_k (x_i)\int_E\widehat{p}_k (x_i,\,dy)\sum_{A\in \NN}\pi_{x_i}(A)\big( F^\star(\underline{x}^{A,x_i;\,i,y},\varsigma_{|A|})-F^\star(\underline{x},\varsigma) \big)\nonumber\\
    &+\sum_{j=1}^N  \,\frac{\big( \VV_k(x_i)-c\big)^-}{N}\, \big( F^\star(\underline{x}^{i,x_j},\varsigma_{-1})-F^\star(\underline{x},\varsigma) \big)\bigg)\ ,\label{cfgen}
\end{align}
where the test function $F^\star :E^N \times (0,\infty)\to\R$ now has a second counting coordinate, and we denote $\varsigma_n :=\varsigma\cdot \big( 1{+}\tfrac{n}{N}\big)$, with $n\in\N_0\cup\{-1\}$. Also recall that the cloning algorithm is based on a McKean model with parameter $c\in\R$ as given in  \eqref{mcKean1}.

We introduce the coordinate projection $G(\underline{x},\,\varsigma):=\varsigma$ in order to observe only the cloning factor, $G(\zeta_t^N,\,C^N_t)=C^N_t$. 
Note that $E^N \times (0,\infty)$ is not compact, and $G$ is an unbounded test function. 
However, since the range of the clone size distribution is uniformly bounded (condition \ref{bounded_jumps}), $t\mapsto \log C^N_t$ is a birth-death process on $[0,\infty )$ with bounded jump length, and the generator \eqref{cfgen} and associated semigroup is therefore well defined for the test function $G$ (see e.g. \cite{hamza1995conditions}) and all $t\geq 0$.

The following result provides an unbiased estimator for the unnormalized quantity $\nu_t(1)$ based on the cloning factor.

\begin{prop}\label{unbiasedness}
Let $ \overline{L}_{c,k}^{(N,\star)}$ be the extension \eqref{cfgen} of the cloning generator $\overline{L}_{c,k}^N$ \eqref{genL}. Then, the quantity $e^{tc}C^N_t$ is an unbiased estimator for $\nu_t(1)$ \eqref{nu}, i.e.
\begin{equation*}
    \E\big[ e^{tc}C^N_t \big]\,=\,\E\big[\nu_t^N(1)\big]\,=\, \nu_t(1),
\end{equation*}
for every $t\geq 0$ and $N\geq 1$, and all choices of the parameter $c\in\R$ (cf.\ \eqref{mcKean1}).

\end{prop}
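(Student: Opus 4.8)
The plan is to mirror the proof of Proposition \ref{prop_unbiasedness}, now for the extended Markov process $(\zeta_t^N,C_t^N)$ generated by $\overline{L}^{(N,\star)}_{c,k}$ \eqref{cfgen}, using the unbounded but integrable test function $(\underline{x},\varsigma)\mapsto \varsigma$. A preliminary, purely technical step is to check that $e^{tc}C_t^N$ and all of its powers are integrable with bounds locally uniform in $t$: since the support of the clone-size distribution is bounded uniformly in $N$ \eqref{bsup}, $\log C_t^N$ is a birth--death process on $[0,\infty)$ with bounded jump lengths and bounded rates, so $C_t^N$ has finite moments of every order, locally bounded in $t$ (cf.\ \cite{hamza1995conditions}). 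This legitimises Dynkin's formula applied to $e^{tc}\varsigma$ and the differentiation under the expectation below, and turns the relevant local martingales into genuine martingales.

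The key computation is to evaluate $\overline{L}^{(N,\star)}_{c,k}$ on $G(\underline{x},\varsigma)=\varsigma$. Using $\varsigma_{|A|}-\varsigma=\varsigma|A|/N$ and $\varsigma_{-1}-\varsigma=-\varsigma/N$, together with $\int_E\widehat p_k(x,dy)=1$, the identity $\sum_{A\in\NN}\pi_x(A)\,|A|=M(x)$ (a consequence of \eqref{pxA}, as in \eqref{pim1}), the relation $\widehat\lambda_k(x)M(x)=(\VV_k(x)-c)^+$ from \eqref{clomean}, and $a^+-a^-=a$, one finds
\begin{equation*}
\overline{L}^{(N,\star)}_{c,k}G(\underline{x},\varsigma)=\varsigma\big(m(\underline{x})(\VV_k)-c\big)\ .
\end{equation*}
Since $\partial_t(e^{tc}\varsigma)=c\,e^{tc}\varsigma$, Dynkin's formula for $(t,\underline{x},\varsigma)\mapsto e^{tc}\varsigma$ together with the initial condition $C_0^N=1$ and \eqref{inicon} then yields the evolution equation
\begin{equation*}
\frac{d}{dt}\,\E\big[e^{tc}C_t^N\big]=\E\big[e^{tc}C_t^N\,\mu_t^N(\VV_k)\big]\ ,\qquad \E\big[C_0^N\big]=1\ .
\end{equation*}

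To identify the limit I would compare $e^{tc}C_t^N$ with the unnormalised empirical measure of Proposition \ref{prop_unbiasedness} for the tilted model, for which $\nu_t^N(1)=\exp\!\big(\int_0^t\mu_s^N(\VV_k)\,ds\big)$ solves $\tfrac{d}{dt}\nu_t^N(1)=\nu_t^N(1)\,\mu_t^N(\VV_k)$ pathwise, while $\E[\nu_t^N(1)]=\nu_t(1)$ and, with $f=\VV_k$, $\E[\nu_t^N(1)\mu_t^N(\VV_k)]=\E[\nu_t^N(\VV_k)]=\nu_t(\VV_k)$. Setting $D_t:=e^{tc}C_t^N-\nu_t^N(1)$, one has $D_0=0$, and subtracting the two evolution equations gives an integral identity for $\E[D_t]$; combining it with the $L^2$ control of $\mu_t^N(\VV_k)-\mu_t(\VV_k)$ from Theorem \ref{thm_weak}, the bound $\|\VV_k\|<\infty$, and the moment bounds on $C_t^N$ from the first step, a Gr\"onwall argument should force $\E[D_t]\equiv 0$, whence $\E[e^{tc}C_t^N]=\E[\nu_t^N(1)]=\nu_t(1)$. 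The main obstacle is exactly this comparison step: because the cloning-factor increments are coupled to the mutation move of the cloning algorithm, the evolution equation for $\E[e^{tc}C_t^N]$ is not self-contained, and one has to exploit the structural identity $\sum_{A\ni j}\pi_x(A)=M(x)/N$ \eqref{pim1} — which is what makes the expected cloning-factor increment coincide with the contribution of the potential $\VV_k$ — to guarantee that the systematic error cancels exactly rather than only to order $1/N$; the integrability of $C_t^N$ established at the outset is what makes the martingale and Gr\"onwall manipulations rigorous on the noncompact state space $E^N\times(0,\infty)$.
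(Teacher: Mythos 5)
Your proposal follows the paper's proof almost step for step: the computation $\overline{L}^{(N,\star)}_{c,k}(G)(\underline{x},\varsigma)=\varsigma\big(m(\underline{x})(\VV_k)-c\big)$ via \eqref{pim1}--\eqref{pim2}, the resulting evolution equation for $\E[e^{tc}C^N_t]$, and the comparison with $\nu^N_t(1)$ from Proposition \ref{prop_unbiasedness} closed by a Gronwall-type argument are exactly what the paper does (the paper phrases it in terms of $\E[C^N_t]$ versus $\E[\nu^N_t(e^{-tc})]$, which is the same thing), and your preliminary moment/integrability check is a sensible addition that the paper only gestures at in the text preceding the proposition.

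One warning about your closing step: Theorem \ref{thm_weak} has no role here, and invoking it would actually weaken the conclusion. If you use the $L^2$ control to replace $\mu^N_s(\VV_k)$ by $\mu_s(\VV_k)$ up to an error of order $N^{-1/2}$, a Gronwall argument can only deliver $\E[D_t]=O(N^{-1/2})$, i.e.\ asymptotic rather than exact unbiasedness. The cancellation is exact precisely because $e^{tc}C^N_t$ and $\nu^N_t(1)$ are driven by the \emph{same} random quantity $\mu^N_s(\VV_k)$ on the same probability space: subtracting the pathwise identity $\frac{d}{dt}\nu^N_t(1)=\nu^N_t(1)\,\mu^N_t(\VV_k)$ from the Dynkin decomposition of $e^{tc}C^N_t$ shows that $t\mapsto D_t-\int_0^t D_s\,\mu^N_s(\VV_k)\,ds$ is a local martingale null at $0$ (a true martingale by your moment bounds). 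Note that the naive estimate $\big|\frac{d}{dt}\E[D_t]\big|\leq\|\VV_k\|\,\E[|D_t|]$ does \emph{not} close a Gronwall inequality for $|\E[D_t]|$, since $D_t$ is not pathwise zero; the clean way to finish (and to make the paper's own informal "Gronwall argument" rigorous) is to solve the linear equation with the integrating factor $\exp\big(-\int_0^t\mu^N_s(\VV_k)\,ds\big)$, which exhibits $D_t$ as a stochastic integral of a bounded predictable process against that martingale and hence gives $\E[D_t]=0$ directly.
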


\begin{proof}
First, observe that following \eqref{pim1} and \eqref{pim2}
\begin{align}\label{cl1}
\overline{L}_{c,k}^{(N,\star)} (G)(\underline{x},\varsigma )=&\sum_{i=1}^N \sum_{n=0}^N \widehat{\l}_k (x_i)\, \pi_{x_i,n}\cdot ({\varsigma\,n}/{N})\,-\, \sum_{i=1}^N \frac{\varsigma}{N}\,\big(\VV_k(x_i)-c\big)^-\nonumber\\
=&\,\frac{\varsigma}{N}\,\sum_{i=1}^N {\big(\VV_k (x_i )-c\big)}  \ ,
\end{align}
using the mean $M(x_i)$ of the distribution $\pi_{x_i,n}$ as given in Proposition \ref{prop_L}. Therefore, 
\[
\overline{L}_{c,k}^{(N,\star)} (G)(\underline{x},\varsigma )\,=\, \varsigma\, m (\underline{x}) \big(\VV_k-c\big) \ ,
\]
and analogously to \eqref{vte}, 
the expected time evolution of $C^N_t$ is then given by
\begin{equation*}
    \frac{d}{dt}\E[C^N_t]\,=\,\E[C^N_t\cdot \mu^N_t(\VV_k-c)].
\end{equation*}
This is also the evolution of $\nu_t^N(e^{-tc})=e^{-tc}\nu_t^N(1)$, since
\begin{align*}
    \frac{d}{dt}\E[\nu_t^N(e^{-tc})]\,&=\,\E[\mu_t^N(\VV_k)\cdot e^{-tc}\nu_t^N(1)-c\ e^{-tc}\nu_t^N(1)]\\
    &=\,\E[\nu_t^N(e^{-tc})\cdot \mu^N_t(\VV_k-c)].
\end{align*}
With initial conditions $C_k^N (0)=1=\nu_t^N (1)$, the statement follows by a Gronwall argument analogous to \eqref{gronwall1} and by Proposition \ref{prop_unbiasedness}.

\end{proof}

Proposition \ref{unbiasedness} leads to an alternative estimator for $\L_k^{t,T}$ \eqref{lktt} given by
\begin{equation}\label{cflambda}
    \overline{\Lambda}_k^{t,T,N}:=\frac{1}{T-t}\big(\log C^N_T-\log C^N_t\big)\,+\,c.
\end{equation}
Note that this is not itself unbiased as a consequence of the nonlinear transformation involving the logarithm.

In order to study the convergence of the new estimator to the SCGF, it is convenient to use the martingale characterization of the process, which is given by the following result.

\begin{prop}\label{martingale_factor}
Let $ \overline{L}_{c,k}^{(N,\star)}$ be the extension \eqref{cfgen} of the cloning generator $\overline{L}_{c,k}^N$. Then, the process
\begin{equation*}
    \MM^\star_t:= \log C^N_t-\int_0^t \overline{L}_{c,k}^{(N,\star)} (H)\big(\zeta^N_s,\, C^N_s \big) \, ds,
\end{equation*}
with $H(\underline{x},\varsigma )=\log\varsigma$, is a local martingale satisfying
\begin{equation*}
    \MM^\star_t=\log C^N_t-\int_0^t \big(\mu^N_s(\VV_k)-c\big) \, ds\,+\,t\cdot O\Big(\frac{1}{N}\Big),
\end{equation*}
and with predictable quadratic variation
\begin{equation*}
    \langle \MM^\star_\cdot\rangle_t\,=\, \frac{1}{N}\int_0^t \mu_s^N\big(\widehat{\l}_k\,Q\,+\,(\VV_k-c)^-\big)\,ds\,+\,t\cdot O\Big(\frac{1}{N^2}\Big)\ ,
\end{equation*}
where $Q(x_i)$ is the second moment of the distribution $\pi_{x_i,n}$ \eqref{widetilde_Q}.
\end{prop}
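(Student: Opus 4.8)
The plan is to read off the result from the standard martingale (Dynkin) characterisation of Markov processes, as already used for \eqref{martingale}, applied to the joint process $\big(\zeta^N_t,C^N_t\big)$ on $E^N\times(0,\infty)$ with generator $\overline{L}^{(N,\star)}_{c,k}$ \eqref{cfgen} and the test function $H(\underline{x},\varsigma)=\log\varsigma$: quite generally $\Psi\big(\zeta^N_t,C^N_t\big)-\int_0^t \overline{L}^{(N,\star)}_{c,k}(\Psi)\big(\zeta^N_s,C^N_s\big)\,ds$ is a local martingale, with predictable quadratic variation $\int_0^t \Gamma_{\overline{L}^{(N,\star)}_{c,k}}(\Psi,\Psi)\big(\zeta^N_s,C^N_s\big)\,ds$ provided $\Psi^2$ also lies in the domain. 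The first thing I would record is that, although $H$ is unbounded and $E^N\times(0,\infty)$ is not compact, the bounded activity of the clone-size distribution (condition \eqref{bsup}) makes $t\mapsto\log C^N_t$ a birth--death process with uniformly bounded jump lengths, so $\overline{L}^{(N,\star)}_{c,k}(H)$ and $\overline{L}^{(N,\star)}_{c,k}(H^2)$ are well-defined bounded functions and the characterisation applies with $\Psi=H$ (arguing along a localising sequence, or directly via the criteria of \cite{hamza1995conditions}).

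The second step is to evaluate $\overline{L}^{(N,\star)}_{c,k}(H)$. Since $H$ depends only on the cloning-factor coordinate, and a cloning event of size $n$ sends $\varsigma\mapsto\varsigma(1+n/N)$ while a killing event sends $\varsigma\mapsto\varsigma(1-1/N)$, integrating out the mutated state $y$ and collapsing $\pi_{x_i}(A)$ to depend only on $|A|$ via \eqref{pxA} gives
\[
\overline{L}^{(N,\star)}_{c,k}(H)(\underline{x},\varsigma)=\sum_{i=1}^N\widehat{\l}_k(x_i)\sum_{n=1}^N\pi_{x_i,n}\log\!\Big(1+\tfrac{n}{N}\Big)+\sum_{i=1}^N\big(\VV_k(x_i)-c\big)^-\log\!\Big(1-\tfrac{1}{N}\Big).
\]
I would then Taylor-expand $\log(1+n/N)=n/N+O(1/N^2)$ and $\log(1-1/N)=-1/N+O(1/N^2)$ --- uniformly, since $n\le K$ --- use $\sum_n n\,\pi_{x_i,n}=M(x_i)$ together with the identity $\widehat{\l}_k(x)M(x)=(\VV_k(x)-c)^+$ (the tilted analogue of \eqref{clomean} already exploited in the proof of Proposition \ref{unbiasedness}), and observe that the remainders are controlled uniformly because $\widehat{\l}_k,Q,\VV_k\in\CC_b(E)$. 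This yields $\overline{L}^{(N,\star)}_{c,k}(H)(\underline{x},\varsigma)=m(\underline{x})(\VV_k)-c+O(1/N)$, and integrating in time with $\mu^N_s=m(\zeta^N_s)$ gives the stated formula for $\MM^\star_t$.

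For the predictable quadratic variation I would use that for a pure-jump generator the carr\'{e} du champ is the integral of the squared jump against the jump rate, so that
\[
\Gamma_{\overline{L}^{(N,\star)}_{c,k}}(H,H)(\underline{x},\varsigma)=\sum_{i=1}^N\widehat{\l}_k(x_i)\sum_{n=1}^N\pi_{x_i,n}\log^2\!\Big(1+\tfrac{n}{N}\Big)+\sum_{i=1}^N\big(\VV_k(x_i)-c\big)^-\log^2\!\Big(1-\tfrac{1}{N}\Big),
\]
and then expand $\log^2(1+n/N)=n^2/N^2+O(1/N^3)$ and $\log^2(1-1/N)=1/N^2+O(1/N^3)$. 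Using $\sum_n n^2\pi_{x_i,n}=Q(x_i)$ this collapses to $\tfrac{1}{N}\,m(\underline{x})\big(\widehat{\l}_k Q+(\VV_k-c)^-\big)+O(1/N^2)$, and integrating against $ds$ gives the claimed expression for $\langle\MM^\star\rangle_t$.

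The bulk of this is routine Taylor bookkeeping; the one point that genuinely needs care, rather than calculation, is the first one --- justifying the Dynkin formula and the carr\'{e}-du-champ identity on the non-compact space $E^N\times(0,\infty)$ with the unbounded observable $H=\log\varsigma$ --- which is handled by the uniform bound $K$ on the support of $\pi_{x,\cdot}$, reducing $\log C^N_t$ to a bounded-jump birth--death process for which a localisation argument applies, after which the uniform $O(1/N)$ and $O(1/N^2)$ estimates above survive the time integral to produce the $t\cdot O(1/N)$ and $t\cdot O(1/N^2)$ terms.
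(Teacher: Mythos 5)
Your proposal is correct and follows essentially the same route as the paper: collapse the clone-set distribution via \eqref{pxA} to a sum over clone sizes, Taylor-expand $\log(1+n/N)$ and $\log(1-1/N)$ uniformly using the bounded support $K$, and identify the drift and carr\'e du champ via the moments $M$ and $Q$ and the relation $\widehat{\l}_k M=(\VV_k-c)^+$. The only difference is that you spell out the localisation/well-posedness of the martingale problem for the unbounded observable $H=\log\varsigma$ inside the proof, whereas the paper disposes of this in the text preceding the proposition; the computations are otherwise identical.
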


\begin{remark}
Note that, in the case in which there is at most one clone per transition event, i.e. if $Q(x_i)=M(x_i)=(\VV_k(x_i)-c)^+/\widehat{\l}_k(x_i)$, then 
\begin{equation*}
    \langle \MM^\star_\cdot\rangle_t\,=\, \frac{1}{N}\int_0^t \big(\mu^N_s(\VV_k)-c\big)\,ds\,+\,t\cdot O\Big(\frac{1}{N^2}\Big)\ .
\end{equation*}
\end{remark}

\begin{proof}
Observe that we can rewrite \eqref{cfgen} as
\begin{align*}
    \overline{L}_{c,k}^{(N,\star)}(H)(\underline{x},\varsigma)
    &=\sum_{i=1}^N \bigg(\sum_{n=0}^N \widehat{\l}_k(x_i)\,\pi_{x_i,n}\log (1{+}{n}/{N})\,+\, \big(\VV_k(x_i){-}c\big)^-\,\log(1{-}1/N)\bigg)\\
    &=m(\underline{x})\big(\VV_k\big)-c\,+\, O\Big(\frac{1}{N}\Big)\ ,
\end{align*}
using the expansion $\log(1+x) =x+O(x^2 )$ as $x\to 0$. Similarly,
\begin{align*}
    \Gamma_{\overline{L}_{c,k}^{(N,\star) }}&(H,H)(\underline{x},\varsigma)\\
    =\,& \sum_{i=1}^N \bigg(\sum_{n=0}^N \widehat{\l}_k(x_i)\, \pi_{x_i,n} \big(\log(1{+}n/N)\big)^2\,+\,  \big(\VV_k(x_i){-}c\big)^-\,\big(\log(1{-}1/N)\big)^2\bigg)\\
    =\,& \frac{1}{N}\, m(\underline{x})\big(\, \widehat{\l}_k Q\,+\, (\VV_k-c)^-\big) \,+\, O\Big(\frac{1}{N^2}\Big).
\end{align*}

The statement corresponds to the martingale problem associated to $\overline{L}_{c,k}^{(N,\star)}(H)$.

\end{proof}

By Proposition \ref{martingale_factor} and recalling the definition of the SCGF estimators $\L_k^{t,T,N}$ \eqref{LTN} and $\overline{\L}_k^{t,T,N}$ \eqref{cflambda} we immediately get
\begin{equation*}
    \L_k^{t,T,N}\,=\,\overline{\L}_k^{t,T,N} \,-\, \frac{\MM^\star_T-\MM^\star_t}{T-t}\,+\,O\Big(\frac{1}{N}\Big)\ .
\end{equation*}

In what follows, we discuss the convergence of the estimator $\overline{\L}_k^{aT,T,N}$ to the SCGF $\L_k$, which is based on the cloning factor.

\begin{thm}\label{thm_cloning}
Let $ \overline{L}_{c,k}^{(N,\star)}$ be the extension \eqref{cfgen} of the cloning generator $\overline{L}_{c,k}^N$. Then, for every $p\geq 2$ and $a\in [0,1)$, there exists a constant $C_p^\star>0$ such that for all $N$ large enough
\begin{equation}\label{th1}
    \E\bigg[ \Big| \overline{\L}_k^{aT,T,N}-\L_k^{aT,T,N}\Big|^p\bigg]^{1/p}\leq\,  \frac{C_p^\star}{N^{1+1/p}\cdot \sqrt{T}}.
\end{equation}
If in addition Assumption \eqref{asymptotic_stability} on asymptotic stability holds, there exist constants $\gamma_p^\star,\,c_p^\star, \a'>0$ and $0<\rho<1$ (dependent on $a,\, p,\,\widehat{\l}_k,\,{Q}$ and $\VV_k$) 
such that
\begin{equation*}
    \E\bigg[ \Big| \overline{\L}_k^{aT,T,N}-\L_k\Big|^p\bigg]^{1/p}\leq \frac{\gamma_p^\star}{N^{1+1/p}\cdot \sqrt{T}} \,+\, \frac{c_p^\star}{\sqrt{N}}\,+\,\frac{\a' \rho^{aT}}{T},
\end{equation*}
for every $T\geq 1$.
\end{thm}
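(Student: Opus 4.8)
\textit{Proof proposal.} The plan is to reduce both assertions to the martingale decomposition of $\log C^N_t$ recorded in Proposition \ref{martingale_factor}, to control the resulting martingale increment with the moment inequality of Lemma \ref{lemma6.2}, and then to combine with the estimate for $\L_k^{aT,T,N}$ already obtained in Proposition \ref{thm_Lp}.

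I would first establish \eqref{th1}. With $H(\underline x,\varsigma)=\log\varsigma$ the process $\MM^\star$ of Proposition \ref{martingale_factor} is a local martingale, and, using that proposition together with the expansion $\log(1+x)=x+O(x^2)$, the identity displayed just before the theorem is equivalent to
\[
\overline{\L}_k^{aT,T,N}-\L_k^{aT,T,N}\,=\,\frac{\MM^\star_T-\MM^\star_{aT}}{(1-a)T}\,+\,O\!\left(\frac1N\right),
\]
where the $O(1/N)$ is a remainder bounded, pathwise and uniformly in $T$, by a constant multiple of $\|\widehat{\l}_k Q+(\VV_k-c)^-\|/N$. The jumps of $\MM^\star$ satisfy $|\Delta\MM^\star_s|=\log(1+n/N)\leq K/N$ since the clone size is uniformly bounded by $K$ (condition \eqref{bsup}), and its predictable quadratic variation is continuous with
\[
\langle\MM^\star\rangle_T-\langle\MM^\star\rangle_{aT}\,=\,\frac1N\int_{aT}^T\mu^N_s\big(\widehat{\l}_k Q+(\VV_k-c)^-\big)\,ds\,+\,(1-a)T\cdot O(1/N^2)\,\leq\,\frac{C(1-a)T}{N},
\]
because $\widehat{\l}_k,Q,\VV_k\in\CC_b(E)$. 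Applying Lemma \ref{lemma6.2} to the shifted martingale $\tau\mapsto\MM^\star_{aT+\tau}-\MM^\star_{aT}$ with jump bound $K/N$ and the above bound on the quadratic variation, for $p=2^{q+1}$, and then extending to all $p\geq2$ by Jensen's inequality, I would bound $\E[|\MM^\star_T-\MM^\star_{aT}|^p]^{1/p}$; dividing by $(1-a)T$ and collecting the $O(1/N)$ remainder yields \eqref{th1}.

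The second bound then follows from the triangle inequality
\[
\E\big[\big|\overline{\L}_k^{aT,T,N}-\L_k\big|^p\big]^{1/p}\,\leq\,\E\big[\big|\overline{\L}_k^{aT,T,N}-\L_k^{aT,T,N}\big|^p\big]^{1/p}+\E\big[\big|\L_k^{aT,T,N}-\L_k\big|^p\big]^{1/p},
\]
bounding the first term by \eqref{th1} and the second by Proposition \ref{thm_Lp}, which contributes precisely the $c_p^\star/\sqrt N+\a'\rho^{aT}/T$ part; note that the dominant contributions are the $1/\sqrt N$ Monte Carlo error and the $\rho^{aT}/T$ burn-in error, so the cloning-factor correction \eqref{th1} only affects constants.

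I expect the delicate step to be the $L^p$ control of the martingale increment: Lemma \ref{lemma6.2} must be run carefully enough that, after dividing by $(1-a)T$, the resulting bound genuinely decays in $T$ and does so uniformly in $T$, which is exactly where the smallness of the jumps ($O(1/N)$) must be traded off against the linear-in-$T$ growth of the quadratic variation in the sum over $k$ in Lemma \ref{lemma6.2}. A secondary technical point is that $(\zeta_t^N,C^N_t)$ lives on the non-compact space $E^N\times(0,\infty)$ and $H=\log\varsigma$ is unbounded, so the martingale problem defining $\MM^\star$ has to be justified; this is legitimate because, by \eqref{bsup}, $t\mapsto\log C^N_t$ is a birth--death process with uniformly bounded jump length, whose generator and semigroup are well defined on $H$, as already noted in the text preceding Proposition \ref{unbiasedness}.
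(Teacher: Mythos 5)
Your route is the same as the paper's: express $\overline{\L}_k^{aT,T,N}-\L_k^{aT,T,N}$ through the martingale $\MM^\star$ of Proposition \ref{martingale_factor}, control its increment via Lemma \ref{lemma6.2} using the $O(1/N)$ jump bound and the $O(T/N)$ quadratic variation, and obtain the second assertion by the triangle inequality with Proposition \ref{thm_Lp}. The second assertion goes through exactly as you describe. The gap is in the final step for \eqref{th1}, ``dividing by $(1-a)T$ and collecting the $O(1/N)$ remainder yields \eqref{th1}'': carried out honestly, your computation does not produce the rate $N^{-1-1/p}T^{-1/2}$. First, the remainder in $\overline{\L}_k^{aT,T,N}-\L_k^{aT,T,N}=\frac{\MM^\star_T-\MM^\star_{aT}}{(1-a)T}+O(1/N)$ comes from $\sum_{i}\widehat{\l}_k(x_i)\sum_n\pi_{x_i,n}\big[\log(1+n/N)-n/N\big]$ and is genuinely of order $1/N$ uniformly in $T$; a term of size $c/N$ cannot be absorbed into $C_p^\star N^{-1-1/p}T^{-1/2}$. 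Second, with jump bound $O(1/N)$ and $\E\big[(\langle\MM^\star\rangle_T-\langle\MM^\star\rangle_{aT})^{2^k}\big]\le (CT/N)^{2^k}$, Lemma \ref{lemma6.2} gives for $p=2^q$
\[
\E\big[|\MM^\star_T-\MM^\star_{aT}|^{2^q}\big]\,\le\,C\sum_{k=0}^{q-1}N^{-(2^q-2^{k+1})}\Big(\frac{T}{N}\Big)^{2^k}\,=\,C\sum_{k=0}^{q-1}\frac{T^{2^k}}{N^{2^q-2^k}}\ ,
\]
and after dividing by $((1-a)T)^{2^q}$ and taking the $2^q$-th root the dominant ($k=q-1$) term is of order $(NT)^{-1/2}$ — consistent with the $L^2$ heuristic $\E[(\MM^\star_T-\MM^\star_{aT})^2]=\E[\langle\MM^\star\rangle_T-\langle\MM^\star\rangle_{aT}]=O(T/N)$, but with the wrong power of $N$ relative to \eqref{th1}. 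The paper reaches $N^{-1-1/p}T^{-1/2}$ only because the first line of its proof divides by $\big(N(1-a)T\big)^{2^q}$ rather than $\big((1-a)T\big)^{2^q}$, i.e., it inserts an extra factor of $N$ that is not present in the identity preceding the theorem — the very identity you start from. So your argument, as set up, lands on $C_p\big((NT)^{-1/2}+N^{-1}\big)$ and cannot recover \eqref{th1} as stated.

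The discrepancy is harmless for the second, more important assertion: for $T\ge1$ one has $(NT)^{-1/2}+N^{-1}\le C N^{-1/2}$, so your triangle inequality with Proposition \ref{thm_Lp} still yields $\E\big[|\overline{\L}_k^{aT,T,N}-\L_k|^p\big]^{1/p}\le c_p^\star N^{-1/2}+\a'\rho^{aT}/T$, with the cloning-factor correction subsumed into the $N^{-1/2}$ term as you anticipated. But you should either prove the weaker (and, I believe, correct) version of \eqref{th1}, or identify where an additional factor of $N$ could legitimately come from; as written, the passage from the martingale estimate to the claimed rate is the missing step.
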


\begin{proof}
Thanks to Jensen's inequality, it is enough to prove the inequality for all $p=2^q$, $q\in\N$. First, we can write
\begin{align*}
    \E\bigg[ \bigg| \overline{\L}_k^{aT,T,N}-\L_k^{aT,T,N}\bigg|^{2^q}\bigg]\,&=\,\frac{1}{\big(N\cdot (1-a)T\big)^{2^q}}\cdot \E\Big[\big|\MM^\star_T-\MM^\star_{aT}\big|^{2^q}\Big]\\
    &\leq\,\frac{1}{\big(N\cdot (1-a)T\big)^{2^q}}\cdot \E\Big[\big|\MM^\star_T\big|^{2^q}\Big].
\end{align*}
Observe that $\sup_{t\leq T}\big|\MM^\star_t\big|<\infty$, so the assumptions of Lemma \ref{lemma6.2} are satisfied. Thus, using Lemma \ref{lemma6.2}, we obtain
\begin{align*}
\frac{1}{N^{2^q}\cdot T^{2^q}}\cdot \E\Big[\big|\MM^\star_T\big|^{2^q}\Big]
&\leq\,\frac{C_{q}}{N^{2^q}\cdot T^{2^q}}\,\sum_{k=0}^{q-1} \E\big[ (\langle\MM^\star_\cdot\rangle_T)^{2^k} \,\big]\\
    &\leq\,\frac{\widetilde{C}_q}{N^{2^q}}\,\sum_{k=0}^{q-1}\frac{1}{T^{2^q-2^k}}\,\bigg( \frac{1}{N^{2^k}}\,+\, O\Big(\frac{1}{N^{2^k+1}}\Big)\bigg)\\
    &\leq \, \frac{ C^\star_q}{N^{2^q+1}\cdot T^{2^{q-1}}}\ .
\end{align*}
The second part of the Theorem follows directly by Proposition \ref{thm_Lp}.

\end{proof}

Therefore, the $L^p$-error for estimator $\overline{\L}_k^{aT,T,N}$ has the same rate of convergence $1/\sqrt{N}$ as $\L^{aT,T,N}_k$. Analogous results hold for the bias estimates, which have order of convergence $1/N$ as for the estimator $\L_k^{aT,T,N}$ (Proposition \ref{thm_Lp}), since with \eqref{th1} the difference of both estimators is only of order $N^{-1-1/p}$.

\section{Discussion\label{discussion}}

In this work we have established a framework to compare variants of cloning algorithms and understand their connections with mean field particle approximations. 
This allowed us to obtain first rigorous results on the convergence properties of cloning algorithms in continuous time.
Our results apply in the general setting of jump Markov processes on locally compact state spaces. 
Essential conditions for our approach are summarized in Assumptions \ref{ass_expstab_generic} on asymptotic stability of the process and \ref{ass_IPS} on the particle approximation, which are usually straightforward to check for practical applications. We summarize further sufficient conditions for asymptotic stability in the Appendix \ref{appendix_stability}.

In certain situations the cloning algorithm is computationally cheaper and simpler to implement than mean field particle systems, since only the mutation process has to be sampled independently for all particles and cloning events happen simultaneously. However, as discussed in \cite{angeli2018rare}, this choice reduces in general the accuracy of the estimator since it does not consider the fitness potential of the replaced particles during the cloning events. Adjusting the algorithm by allowing only substitutions of particles with lower fitness based on different McKean models could improve the accuracy. The approach developed in this paper can be used to conduct a systematic study of this question, which is current work in progress.

\appendix

\section{Asymptotic Stability}\label{appendix_stability}

We present sufficient conditions for asymptotic stability as presented in Assumption \ref{ass_expstab_generic}. The discussion is based on the work of Tweedie et al. \cite{down1995exponential,tweedie1994topological}, which we briefly recall in Lemma \ref{tweedie} below.

\begin{defn}
A Feller process $Y_t$ is said to be $\phi$-\textit{irreducible} for a non-trivial measure $\phi$  (i.e. $\phi(E)>0$) on $(E,\BB(E))$, if $\E_x\big[\int_0^\infty \1_{Y_t\in A}dt \big]>0$ for every $x\in E$ and every set $A\in \BB(E)$ such that $\phi(A)>0$. 
We simply say that $Y_t$ is \textit{irreducible} if it is $\phi$-irreducible for some $\phi$.
\end{defn}

\begin{defn}
A $\phi$-irreducible Feller process $Y_t$ is called \textit{aperiodic} if there exists a small set $C\in \BB(E)$, $\phi(C)>0$, such that the associated Markov semigroup $P(t)$ satisfies the following conditions:
\begin{itemize}
    \item there exists a non-trivial measure $\eta$ and $t>0$ such that $P(t)\,(x,B)\geq \eta(B)$, for all $x\in C$ and $B\in\BB(E)$;
    \item there exists a time $\tau\geq 0$ such that $P(t)\,(x,C)>0$, for all $t\geq \tau$ and $x\in C$.
\end{itemize}
\end{defn}


\begin{lemma}\label{tweedie}
Let $Y_t$ be a $\phi$-irreducible and aperiodic Feller process on a locally compact state space $E$ such that $\mathrm{supp}\,\phi$ has non-empty interior. Denote by $\LL$ and $P(t)$ the associated infinitesimal generator and the semigroup, respectively. Assume that for a given function $h\in\CC_b(E)$ such that $h\geq 1$, there exist constants $b,c>0$ and a compact set $S\in\BB(E)$ such that for all $x\in E$
\begin{equation*}
    \LL (h)(x)\leq -c\cdot h(x)+b\1_S (x) . 
\end{equation*}
Then there exist constants $\a\geq 0$ and $\rho\in (0,1)$ such that for any test function $f\in\CC_b(E)$ and $t\geq 0$, 
\begin{equation*}
    \big| P(t)f(x)-\pi(f)\big|\leq \| f\|\,h(x)\cdot \a\rho^t,
\end{equation*}
for any $x\in E$, where $\pi$ is the (unique) invariant measure of $Y_t$.
\end{lemma}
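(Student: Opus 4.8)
The hypothesis $\LL(h)\leq -c\,h+b\1_S$ with $h\in\CC_b(E)$, $h\geq 1$ and $S$ compact is precisely a geometric Foster--Lyapunov drift condition, and the conclusion is the continuous-time $h$-geometric ergodicity theorem of Down, Meyn and Tweedie \cite{down1995exponential}. The plan is therefore to quote that theory, reducing the argument to two points: first, that under the stated topological hypotheses every compact subset of $E$ is petite for a suitable skeleton chain of $Y_t$; second, that a geometric drift towards a petite set yields positive Harris recurrence together with the $h$-weighted exponential bound.

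First I would establish the petiteness of compact sets. A $\phi$-irreducible Feller process whose irreducibility measure has support with non-empty interior is a $T$-process in the sense of Tweedie \cite{tweedie1994topological}, and for $T$-processes all compact sets are petite; aperiodicity upgrades this so that compact sets are in fact small for an appropriately sampled resolvent (or $\delta$-skeleton) chain, which is what the ergodicity theorem requires. This is where the Feller property, irreducibility, the non-empty interior of $\mathrm{supp}\,\phi$ and aperiodicity are all genuinely used.

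With that in hand, $S$ is petite and $h$ is a bona fide geometric drift function, so the main result of \cite{down1995exponential} applies and yields a unique invariant probability measure $\pi$ with $\pi(h)<\infty$ and constants $R<\infty$, $\rho\in(0,1)$ such that
\begin{equation*}
    \sup_{g\,:\,|g|\leq h}\big|P(t)g(x)-\pi(g)\big|\,\leq\, R\,h(x)\,\rho^t\qquad\text{for all }x\in E,\ t\geq 0.
\end{equation*}
Since $h\geq 1$, any $f\in\CC_b(E)$ satisfies $|f|\leq\|f\|\leq\|f\|\,h$, so applying the above to $g=f/\|f\|$ (the case $\|f\|=0$ being trivial) gives $\big|P(t)f(x)-\pi(f)\big|\leq\|f\|\,h(x)\,R\,\rho^t$, which is the claimed bound with $\a:=R$.

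The hard part will be the first step: the passage between the continuous-time process, its resolvent and skeleton chains, and the topological characterisation of petiteness, together with checking that the precise forms of irreducibility and aperiodicity assumed here are exactly those needed to invoke the $T$-process machinery (one should also note that non-explosion, already assumed for the jump processes considered in this paper, is implicit). Everything after that is a direct citation of the Meyn--Tweedie geometric ergodicity theory combined with the elementary observation $|f|\leq\|f\|\,h$ afforded by $h\geq 1$.
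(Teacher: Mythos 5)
Your proposal is correct and follows essentially the same route as the paper: the paper's proof is a direct citation of \cite{down1995exponential}, Theorem 5.2(c), combined with the observation that a $\phi$-irreducible Feller process whose irreducibility measure has support with non-empty interior has all compact sets petite (via the $T$-process results of \cite{tweedie1994topological}, Theorems 5.1 and 7.1). Your additional remarks on the skeleton/resolvent chains and the elementary reduction $|f|\leq\|f\|\,h$ are consistent with, and slightly more explicit than, what the paper records.
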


\begin{proof}
See \cite{down1995exponential}, Theorem 5.2(c), using the fact that if a Feller process $Y_t$ is $\phi$-irreducible and $\mathrm{supp}\,\phi$ has non-empty interior, then every compact set is petite (See \cite{tweedie1994topological}, Theorem 7.1 and Theorem 5.1).

\end{proof}

In the following we discuss how the spectral properties of the tilted generator $\LL^\VV$ in Assumption \ref{ass_expstab_generic} can imply asymptotic stability in the sense of \eqref{asssta}. 

\begin{assumption}\label{Assumption1}
We assume that the spectrum of $\LL^\VV$ \eqref{fkgen} is bounded by a greatest eigenvalue $\lambda_0$. Moreover, there exist a positive function $r\in \CC_b(E)$, unique up to multiplicative constants, and a probability measure $\mu_\infty \in \PP(E)$ satisfying respectively
\begin{equation*}
    \LL^\VV (r) =\lambda_0\cdot r\ ,
\end{equation*}
and
\begin{equation*}
    \mu_\infty \big(\LL^\VV (f)\big)=\lambda_0\cdot\mu_\infty (f)\quad\mbox{for any }f\in\CC (E)\ .
\end{equation*}
Without loss of generality, we can assume $\mu_\infty (r)=1$.
\end{assumption}

\begin{remark}
Sufficient conditions for Assumption \ref{Assumption1} to hold can be found, for instance, in \cite{gong2006spectral, gong2001poincare}. These are of course satisfied if the original process with generator $\LL$ is an irreducible, finite-state Markov chain, including for example stochastic particle systems on finite lattices with a fixed number of particles.
\end{remark}


Under Assumption \ref{Assumption1}, we define the generator
\begin{equation*}
    \LL^\VV_r(f)(x) \,=\, r^{-1}(x)\cdot \LL^\VV (r\cdot f)(x)\,-\, \lambda_0\cdot f(x)\ ,
\end{equation*}
which is known in the literature as \textit{Doob's $h$-transform} of $\LL^\VV$ \citep{chetrite} or twisted Markov kernel \citep{whiteley2017calculating}. Observe that $\LL^\VV_r1=0$, so that it is a probability generator associated to a Markov process with probability semigroup defined for any $f \in \CC_b(E)$ by
\begin{equation*}
    P^\VV_r(t)f(x):= r^{-1}(x)\cdot e^{-\lambda_0}\,P^\VV (t) (r f)(x).
\end{equation*}

\begin{prop} [Asymptotic stability]\label{prop_asymptotic_stability}
Assume that there exists $\e>0$ such that the set
\begin{equation*}
    K_\e:=\big\{x\in E\,\big|\, \VV(x)\geq \lambda_0-\e\big\}
\end{equation*}
is compact. Under Assumption \ref{Assumption1}, if the initial pure jump process $(X_t :t\geq 0)$ with generator $\LL$ is $\phi$-irreducible for some $\phi$ for which $\textrm{supp}\,\phi$ has non-empty interior, and aperiodic as defined above then \eqref{asssta} holds, i.e. there exists $\alpha >0$ and $\rho\in (0,1)$ such that
\begin{equation*}
    \big\| e^{-\lambda_0}P^\VV(t)f-\mu_\infty (f)\big\|\leq \|f\|\cdot \a\rho^t
\end{equation*}
for every $t\geq 0$ and $f\in\CC_b (E)$.
\end{prop}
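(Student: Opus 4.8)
The plan is to reduce the asymptotic stability bound \eqref{asssta} to the Foster--Lyapunov (geometric drift) criterion of Lemma \ref{tweedie}, applied to the Markov process generated by Doob's transform $\LL^\VV_r$. First I would identify that process explicitly: inserting $\LL^\VV(f)(x)=\int_E W(x,dy)\big(f(y)-f(x)\big)+\VV(x)f(x)$ into $r^{-1}\LL^\VV(r\,\cdot\,)-\lambda_0$ and using the eigenrelation $\LL^\VV(r)=\lambda_0 r$, the diagonal (multiplicative) term cancels, so that $\LL^\VV_r$ is the generator of a pure jump process with jump kernel $\widetilde W(x,dy)=\tfrac{r(y)}{r(x)}W(x,dy)$ and escape rate
\[
\widetilde\l(x)=\frac{1}{r(x)}\int_E r(y)\,W(x,dy)=\l(x)-\VV(x)+\lambda_0\,\in\,\CC_b(E),
\]
which is nonnegative and bounded; hence, as already noted before the statement, $P^\VV_r(t)=r^{-1}e^{-t\lambda_0}P^\VV(t)(r\,\cdot\,)$ is an honest Feller semigroup. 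Its unique invariant probability measure is $\pi_r(dx):=r(x)\,\mu_\infty(dx)$: this is a probability since $\mu_\infty(r)=1$, and its invariance follows at once from the left eigenrelation $\mu_\infty\big(\LL^\VV(\,\cdot\,)\big)=\lambda_0\mu_\infty(\,\cdot\,)$ of Assumption \ref{Assumption1}.

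Next I would verify the drift condition with the Lyapunov function $h:=1/r$ (rescaled so that $h\ge 1$ if one wants the literal hypothesis of Lemma \ref{tweedie}). Using $\LL^\VV(r)=\lambda_0 r$ once more yields the clean identity
\[
\LL^\VV_r(h)(x)=\big(\VV(x)-\lambda_0\big)\,h(x)\qquad(x\in E).
\]
On $E\setminus K_\e$ we have $\VV(x)-\lambda_0<-\e$, so $\LL^\VV_r(h)\le-\e\,h$ there, while on the compact set $K_\e$ continuity of $h$ gives $\LL^\VV_r(h)\le(\|\VV\|+|\lambda_0|)\,\sup_{K_\e}h<\infty$; combining, $\LL^\VV_r(h)(x)\le-\e\,h(x)+b\,\1_{K_\e}(x)$ for a finite constant $b$. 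This is exactly the hypothesis of Lemma \ref{tweedie} with $c=\e$ and compact level set $S=K_\e$, and it is the only place the compactness assumption on $K_\e$ enters.

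It then remains to transfer $\phi$-irreducibility and aperiodicity from $(X_t:t\ge 0)$ to the transformed process. Since $r>0$ is bounded and continuous, the jump kernels $\widetilde W(x,dy)$ and $W(x,dy)$ are mutually absolutely continuous, so by a jump-process Girsanov argument the path laws of the two processes over any finite horizon are mutually absolutely continuous. Consequently the reachable sets, the small sets, and the minorization bounds defining $\phi$-irreducibility and aperiodicity are inherited from $(X_t)$, and in particular the support of the irreducibility measure is unchanged and still has nonempty interior. Lemma \ref{tweedie} then supplies constants $\a\ge 0$ and $\rho\in(0,1)$ such that $\big|P^\VV_r(t)g(x)-\pi_r(g)\big|\le \|g\|\,h(x)\,\a\rho^t$ for all $g\in\CC_b(E)$.

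Finally I would transport this back. Applying the estimate to $g=f/r$, and using $\pi_r(f/r)=\mu_\infty(r\cdot f/r)=\mu_\infty(f)$, $r(x)h(x)=1$, and $e^{-t\lambda_0}P^\VV(t)f=r\cdot P^\VV_r(t)(f/r)$, the factor $r$ exactly cancels the weight $h=1/r$, and one obtains the uniform bound $\big\|e^{-t\lambda_0}P^\VV(t)f(\cdot)-\mu_\infty(f)\big\|\le C\,\|f\|\,\rho^t$, i.e.\ \eqref{asssta} after relabelling. The main obstacle is precisely this cancellation: making the conclusion genuinely uniform in $x$ on a non-compact state space forces one to use the $h$-geometric form of the ergodic estimate (as in \cite{down1995exponential}) rather than a crude bound, since $h=1/r$ need not lie in $\CC_b(E)$ unless $\inf_E r>0$ (which is automatic when $E$ is compact, the principal case of interest). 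The irreducibility/aperiodicity transfer is routine but also needs care, since Lemma \ref{tweedie} is stated for the transformed chain and not for the original process.
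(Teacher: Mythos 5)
Your proof is correct and follows essentially the same route as the paper: pass to the Doob transform $\LL^\VV_r$, verify the Foster--Lyapunov drift condition with $h=r^{-1}$ on the compact level set $K_\e$, invoke Lemma \ref{tweedie}, and transport the weighted ergodic bound back via the cancellation $r\cdot h=1$. You in fact supply more detail than the paper at several points (the explicit jump kernel of the transformed process, the identity $\LL^\VV_r(r^{-1})=(\VV-\lambda_0)r^{-1}$, the absolute-continuity argument for transferring irreducibility, and the caveat that uniformity requires $\inf_E r>0$), all of which is consistent with the paper's argument.
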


\begin{proof}
First, note that if the initial process $X_t$ is irreducible and aperiodic, then also the process associated to $\LL^\VV_r$ is irreducible and aperiodic. Moreover, $\LL^\VV_r$ is bounded in ${K_\e}$ and $\LL^\VV_r(r^{-1})\leq -\e\,r^{-1}$ for every $x\not\in K_\e$. Therefore, the hypotheses of Lemma \ref{tweedie} are satisfied for the generator $\LL^\VV_r$ acting on the function $h=r^{-1}$. Thus, applying the lemma we obtain
\begin{equation*}
    \big| P^\VV_r(t)f(x)-\pi(f)\big|\leq \|f\|\,r^{-1}(x)\cdot \a \rho^t,
\end{equation*}
for any $f\in\CC_b(E)$ and $x\in E$, where $\pi(\cdot)=\mu_\infty (r \,\cdot)\in \PP(E)$ is the invariant measure for $\LL^\VV_r$. Dividing by $r^{-1}(x)$ and substituting $f$ with $r^{-1}f\in\CC_b(E)$, we obtain the statement ($\| r^{-1}\| <\infty$ and can be included in the constant $\a$).

\end{proof}




\section*{Acknowledgments}
{This work was supported by The Alan Turing Institute under the EPSRC grant EP/N510129/1 and
the Lloyd's Register Foundation--Alan Turing Institute Programme on Data-Centric Engineering; AMJ was partially supported by EPSRC grants EP/R034710/1 and EP/T004134/1.}

\bibliographystyle{unsrt}
\bibliography{ms}

\end{document}